\documentclass[12pt]{article}
\usepackage[utf8]{inputenc}

\usepackage[left=1cm, top=1cm, right=1cm, bottom=1cm, includeheadfoot, headheight=0pt, a4paper]{geometry}
\usepackage{fancyhdr}
\usepackage{lastpage}
\usepackage{amsmath}
\usepackage{amssymb}
\usepackage{graphicx}
\usepackage{enumitem}
\usepackage{multicol}
\usepackage{amsmath}
\usepackage{amsfonts}
\usepackage[utf8]{inputenc}
\usepackage{amsthm}
\usepackage{tikz-cd}
\usepackage{geometry}
\usepackage{mathtools}
\usepackage{tikz}
\usepackage{comment}
\usepackage{url}
\usetikzlibrary{patterns}
\usepackage{svg}
\usepackage{xcolor}
\usepackage[T1]{fontenc}
\usepackage[utf8]{inputenc}

\newtheorem{thm}{Theorem}[section]
\newtheorem{lemma}[thm]{Lemma}

\newtheorem{prop}[thm]{Proposition}

\newtheorem{cor}[thm]{Corollary}

\newtheorem{prob}[thm]{Problem}
\theoremstyle{remark}
\newtheorem{remark}[thm]{Remark}
\renewcommand{\thefootnote}{\fnsymbol{footnote}}

\newcommand\blfootnote[1]{%
  \begingroup
  \renewcommand\thefootnote{}\footnote{#1}%
  \addtocounter{footnote}{-1}%
  \endgroup
}

\title{On numerical semigroups with embedding dimension four}

\author{Kazimierz Chomicz}
\date{April 2026}
\begin{document}

\maketitle

\abstract{We develop a geometric procedure for finding the Ap{\'e}ry set of any numerical semigroup with embedding dimension four. Previous methods of comparable strength worked only for embedding dimension three or under very specific conditions. We illustrate our method by finding the Frobenius numbers, genera, Betti elements, minimal presentations, and catenary degrees of numerical semigroups generated by four consecutive squares and by four consecutive triangular numbers.}

\section{Introduction}

\blfootnote{\text{Keywords: numerical semigroup, embedding dimension, Frobenius number, genus, catenary degree, Betti elements. }}
\blfootnote{\text{2020 Mathematics Subject Classification: 11D07, 20M13, 20M14.}}

A subset of $\mathbb{N}$ that contains 0, is closed under addition and is missing only finitely many positive integers is called a \emph{numerical semigroup}. The Frobenius problem, also known as the Coin problem or the Chicken McNugget problem, concerns finding the largest integer not in a numerical semigroup. This integer is called the \emph{Frobenius number} and is denoted $F(S)$, for a numerical semigroup $S$.

For a subset $A$ of $\mathbb{N}$, we let $\langle A \rangle$ denote the set of all linear combinations of elements in $A$  with nonnegative coefficients. It is well known that $\langle A \rangle$ is a numerical semigroup if and only if $\gcd(A)=1$ (for instance, see \cite[Lemma 2.1]{Rosales2009}). We define the integer e$(S)$, called the \emph{embedding dimension}, as the cardinality of the unique set of minimal generators of a numerical semigroup $S$. 

An explicit formula for $F(S)$ is well known in the case of a numerical semigroup with two generators (i.e. e$(S) =2$):
$$ F(\langle a,b\rangle) = ab-a-b,$$
obtained by Sylvester in 1883 (see \cite{SylvesterSharp1883}). However, for e$(S) \geq 3$, the Frobenius problem remains open. Indeed, Curtis showed in \cite{Curtis1990} that it is impossible to find a polynomial formula (that is, a finite set of polynomials) to compute the Frobenius number if e$(S) = 3$. Moreover,  Ram{\'i}rez-Alfons{\'i}n \cite{Ramrez1996} proved that the question is NP-hard. Despite this, for the case of three generators, a lot of useful results were proved, with Johnson \cite{Johnson1960}, and Brauer and Shockley \cite{Brauer1962} being among the fundamental ones (for more information we refer the reader to \cite{RamirezAlfonsin2005}).

Many formulas for special types of numerical semigroups have also been established, such as those generated by arithmetic \cite{Brauer1942,Roberts1956} or almost arithmetic sequences \cite{Lewin1975,Selmer1977}, by geometric sequences \cite{Ong2008}, by three consecutive squares or cubes \cite{Felsquaresandcubes,Lepilov2015}, by three Fibonacci numbers \cite{Marin2007}, by squares of three consecutive Fibonacci numbers \cite{Fibsquares2025}, by triangular and tetrahedral numbers \cite{triangular}, and many others.

Numerical semigroups generated by infinite sequences were also considered, including infinite sequences of squares \cite{Moscariello2015} and of other $k$th powers \cite{Arias2025}, and by infinite quadratic sequences \cite{quadraticseq}. See also the references in \cite{Arias2025} and  \cite{quadraticseq}.

In this paper, we will be interested in finding the Frobenius numbers of numerical semigroups generated by four consecutive squares and four consecutive triangular numbers. The corresponding problems for three generators have been solved in \cite{Felsquaresandcubes,Lepilov2015} and \cite{triangular}.  

In Section \ref{geometricprocedure}, we develop a method for finding the Ap{\'e}ry set of any numerical semigroup with embedding dimension four, from which the Frobenius number can be obtained. Our approach directly generalizes the classical Brauer-Shockley framework \cite{Brauer1962}, and is the first method to achieve results of comparable strength in embedding dimension four. This is the main result of this paper. 

In Section \ref{bettielements}, we build on the results of the previous section to find the catenary degree. It plays a crucial role in factorization theory, and recently in \cite{Geroldinger2025}, it was shown that there is no polynomial formula for the catenary degree, which is valid for all numerical semigroups (similarly to the Frobenius number \cite{Curtis1990}). In the process of finding the catenary degree, we will find the set of Betti elements and a minimal presentation.
For more information about this topic, we refer the reader to \cite[Chapter 6]{AssiDAnnaGarciaSanchez2020}. 

In Sections \ref{findingtheFnumber} and \ref{triangularnumbers}, mostly as a showcase, we apply the tools developed throughout the previous sections to numerical semigroups generated by four consecutive squares and four consecutive triangular numbers. Similar results for a different family of numerical semigroups with embedding dimension four were obtained in \cite{triangular}. In particular, the authors studied the Frobenius numbers, Ap{\'e}ry sets, Betti elements, and minimal presentations of numerical semigroups generated by four consecutive tetrahedral numbers. However, the proofs rely heavily on the property --- defined there --- of being telescopic, and cannot be applied for most numerical semigroups. Specifically, they cannot be applied for four consecutive triangular numbers, which is possible using our methods.

We finish the paper with remarks which may stimulate further research.

To obtain many identities that play a crucial role in Sections \ref{findingtheFnumber} and \ref{triangularnumbers}, we used \texttt{Mathematica} \cite{Mathematica}. To verify the resulting formulas for semigroup invariants, we used \texttt{numericalsgps} \cite{NumericalSgps} \texttt{GAP} \cite{GAP4} package.

\section{Geometric procedure}\label{geometricprocedure}
\subsection{Preliminaries}
For a numerical semigroup $S$ and an element $a \in S$, we define the \emph{Ap{\'e}ry set} of $S$ with respect to $a$ as
$$ \text{Ap}(S,a) = \{ s \in S \mid s-a \notin S\}$$
(see \cite{Apery1946}). We have $\# \text{Ap}(S,a) = a$, because the set contains exactly one element from each residue class modulo $a$. This set is of interest to us because from the Ap{\'e}ry set one can obtain several semigroup invariants. Specifically, $F(S) = \max \{\text{Ap}(S,a)\} - a$, and $G(S) = \frac{1}{a} \left( \sum_{s \in \text{Ap}(S,a)} s \right) - \frac{a-1}{2}$, where $G(S)$ is the cardinality of the set $\mathbb{N} \setminus S$ and is called the \emph{genus} (see \cite{Selmer1977}).

A procedure for finding the Ap{\'e}ry set of numerical semigroups with embedding dimension three has been well established (see \cite{Brauer1962}). The procedure is geometric in nature and works as follows. For a numerical semigroup $\langle d_1,d_2,d_3 \rangle$ with $\gcd(d_i,d_j) = 1$ for $i \neq j$, we find the smallest positive integers $a_{11},a_{22},a_{33}$ such that: 
\begin{align*}
    a_{11} d_1 &= a_{12} d_2 + a_{13} d_3,  \\
    a_{22} d_2 &= a_{21} d_1 + a_{23} d_3, \\
    a_{33} d_3 &= a_{31} d_1 + a_{32} d_2,
\end{align*}
for some nonnegative integers $a_{12},a_{13},a_{21},a_{23},a_{31},a_{32}$.
The above equations are called the \emph{minimal relations} of $\langle d_1,d_2,d_3 \rangle$.

Next, we consider a collection of unit squares in the first quadrant, placed against the $x$- and $y$-axes. 
We label the square, whose vertex closest to the origin has the coordinates $(i,j)$, with the value $id_2 + jd_3$. Using the minimal relations, we construct the points $(a_{22},0)$, $(0,a_{33})$, $(a_{12},a_{13})$ and remove the regions $\{x > a_{22}, y>0\}$, $\{x > 0, y>a_{33}\}$, $\{x > a_{12}, y>a_{13}\}$ (see Figure \ref{img3gen}). In \cite{Brauer1962}, it has been proven that the labels remaining in the resulting figure form exactly the set $\text{Ap}(\langle d_1,d_2,d_3 \rangle,d_1)$.

Furthermore, the maximal number of the Ap{\'e}ry set clearly labels one of the corner squares of Figure \ref{img3gen} (colored light gray). Now, using $F(S) = \max(\text{Ap}(S,a)) - a$, this yields the following formula for the Frobenius number (see \cite{Brauer1962} for details). 
\begin{equation}\label{3genF}
 F(\langle d_1,d_2,d_3 \rangle) = \max\{ (a_{22}-1)d_2 + (a_{13}-1)d_3,(a_{12}-1)d_2 + (a_{33}-1)d_3 \} - d_1.
\end{equation}

\begin{figure}[h]
    \caption{a diagram finding $\text{Ap}(\langle d_1,d_2,d_3 \rangle,d_1)$}
    \centering
    \label{img3gen}
    \begin{tikzpicture}

\fill[gray!25] (7.5,4) rectangle (6,3.5);
\fill[gray!25] (12,2) rectangle (10.5,1.5);

\draw[line width=1.5pt] (0,0) -- (0,4);
\draw[line width=1.5pt] (7.5,2) -- (7.5,4);
\draw[line width=1.5pt] (12,0) -- (12,2);
\draw[line width=1.5pt] (0,0) -- (12,0);
\draw[line width=1.5pt] (7.5,2) -- (12,2);
\draw[line width=1.5pt] (0,4) -- (7.5,4);
\draw[dashed] (0,4) -- (0,4.5);
\draw[dashed] (1.5,4) -- (1.5,4.5);
\draw[dashed] (9,2) -- (9,2.5);
\draw[dashed] (7.5,2.5) -- (9,2.5);
\draw (1.5,0) -- (1.5,4);
\draw (3,0) -- (3,4);
\draw (4.5,0) -- (4.5,4);
\draw (6,0) -- (6,4);
\draw (7.5,0) -- (7.5,2);
\draw (9,0) -- (9,2);
\draw (10.5,0) -- (10.5,2);
\draw[dashed] (13.5,0) -- (13.5,0.5);
\draw[dashed] (0,4.5) -- (1.5,4.5);
\draw[dashed] (12,0) -- (13.5,0);
\draw[dashed] (12,0.5) -- (13.5,0.5);
\draw (0,0.5) -- (12,0.5);
\draw (0,1) -- (12,1);
\draw (0,1.5) -- (12,1.5);
\draw (0,2) -- (7.5,2);
\draw (0,2.5) -- (7.5,2.5);
\draw (0,3) -- (7.5,3);
\draw (0,3.5) -- (7.5,3.5);

\node at (0.75,0.25) {$0$};
\node at (2.25,0.25) {$d_3$};
\node at (3.75,0.25) {$2d_3$};
\node at (5.25,0.25) {$3d_3$};
\node at (8.25,0.25) {$a_{13}d_3$};
\node at (12.75,0.25) {$a_{33}d_3$};
\node at (0.75,0.75) {$d_2$};
\node at (0.75,1.25) {$2d_2$};
\node at (0.75,2.25) {$a_{12}d_2$};
\node at (0.75,4.25) {$a_{22}d_2$};
\node at (8.25,2.25) {$a_{11}d_1$};
\node at (2.25,0.75) {$d_2+d_3$};
\node at (2.25,1.25) {$2d_2+d_3$};
\node at (3.75,0.75) {$d_2+2d_3$};
\end{tikzpicture}
\end{figure}
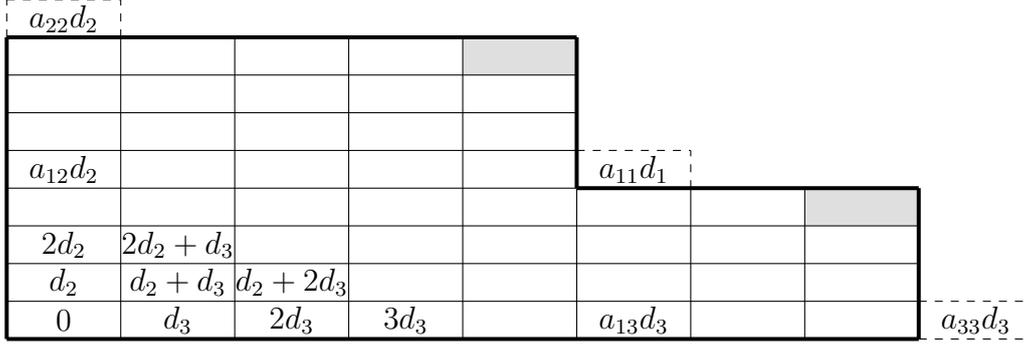

\vspace{10pt}
Killingbergtr{\o}, in \cite{Killingbergtro2000}, considered a possible generalization of the above procedure to numerical semigroups with embedding dimension four. Among other results, he computed the Frobenius number of a randomly chosen numerical semigroup $\langle 103,133,165,228\rangle$, using a geometric procedure, to find the  Ap{\'e}ry set with respect to 103.  His method for a semigroup $\langle d_0,d_1,d_2,d_3\rangle$ involves constructing a three-dimensional figure (Figure \ref{imgnorweg}), consisting of a collection of unit cubes in the first octant, placed against the $xy$-, $yz$-, and $zx$‑planes, where the cube whose vertex closest to the origin has the coordinates $(i,j,k)$, is labeled with the value $id_1 + jd_2 + kd_3$. Similarly to the method for three generators, we then find some special set of points and for each of them having coordinates $(a,b,c)$, we remove the region $\{x > a,y>b,z>c\}$. We do this for the set of labels of the resulting figure to end up being the Ap{\'e}ry set with respect to $d_0$. We will often refer to this set as simply the Ap{\'e}ry set for short.   

The set of points used by Killingbergtr{\o} (and its counterpart for other numerical semigroups) is in general not sufficient to produce the Ap{\'e}ry set. We propose a modified approach that works for any numerical semigroup with embedding dimension four.

\begin{figure}[h]
    \caption{the figure from \cite{Killingbergtro2000}}
    \centering
    \includegraphics[width = 0.58\textwidth]{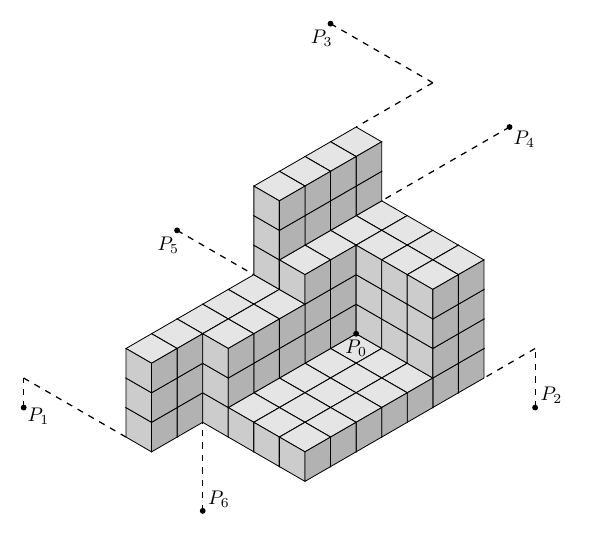}
    \label{imgnorweg}
\end{figure}

\subsection{Procedure}
Let $\langle d_0,d_1,d_2,d_3 \rangle$ be a numerical semigroup with embedding dimension four. 

The remainder of this section develops a method for finding an Ap{\'e}ry set of $\langle d_0,d_1,d_2,d_3 \rangle$. Theorem \ref{theoremprocedure} provides the fundamental result that makes this possible, although it is not directly practical on its own and the idea is not new (see \cite{Kannan1992}). Propositions \ref{proptrick} and \ref{propnottrick} are the key results that make our method innovative and useful in practice. We also include further results in Propositions \ref{propminimalrelations} and \ref{propL-shapes}.

Define the \emph{minimal relations} of $d_0,d_1,d_2,d_3$ respectively:
\begin{align}
    a_{00} d_0 &= a_{01} d_1 + a_{02}d_2 + a_{03} d_3,  \label{minrelation-0} \\
    a_{11} d_1 &= a_{10} d_0+a_{12} d_2 + a_{13} d_3,  \\
    a_{22} d_2 &=a_{20} d_0+ a_{21} d_1 + a_{23} d_3,  \\
    a_{33} d_3 &= a_{30} d_0 +a_{31} d_1 + a_{32} d_2, 
\end{align}
where $a_{00},a_{11}, a_{22}, a_{33}$ are the smallest positive integers such that the above is true for some nonnegative integers $a_{ij}$.

Additionally, define the following relations:
\begin{align}
    b_{11} d_1 &= b_{10} d_0+b_{12} d_2 + b_{13} d_3,  \\
    b_{22} d_2 &= b_{20} d_0+ b_{21} d_1 + b_{23} d_3,  \\
    b_{33} d_3 &= b_{30} d_0 +b_{31} d_1 + b_{32} d_2, 
\end{align}
where $b_{11}, b_{22}, b_{33}$ are the smallest positive integers such that the above is true for some nonnegative integers $b_{ij}$, and specifically with $b_{10}, b_{20}, b_{30}$ being positive. We call these the \emph{$d_0$-positive minimal relations} of $d_1,d_2,d_3$ respectively. Using the above coefficients, we construct the points
\begin{equation}\label{positivepoints}
 P_1=(b_{11},-b_{12},-b_{13}), \ P_2=(-b_{21},b_{22},-b_{23}), \ P_3 = (-b_{31}, -b_{32}, b_{33}),
\end{equation} 
each of which satisfies that $xd_1 + yd_2 + zd_3$ is a positive multiple of $d_0$ --- here, and for the rest of this section, $x,y,z$ will always denote the $x$-, $y$- and $z$- coordinates of a point in question.

Note that in the minimal relations (resp. $d_0$-positive minimal relations) $a_{ii}$ (resp. $b_{ii}$) is unique, however $a_{ij}$ (resp. $b_{ij}$) for $i \neq j$ is not necessarily unique.

Consider unit cubes in $\mathbb{R}^3$. Each unit cube $[i,i+1] \times [j,j+1] \times [k,k+1] \in \mathbb{R}^3$ has natural coordinates $(i,j,k) \in \mathbb{N}^3$ and is labeled with the value $id_1 + jd_2 + kd_3$. We will refer to the collection of all such labeled cubes, as \emph{the initial collection of cubes}. We denote the unit cube $[i,i+1] \times [j,j+1] \times [k,k+1]$ by $[[i,j,k]]$. For a point $P =(a,b,c) \in \mathbb{Z}^3$, we call the region $\{x > a, y>b, z>c\}$, \emph{the region associated to} $P$.

\begin{lemma}\label{1remlemma}
    Deleting from the initial collection of cubes the region associated to a point $P = (\mu_1,\mu_2,\mu_3) \in \mathbb{Z}^3$, satisfying $\mu_1d_1 + \mu_2d_2 + \mu_3d_3 = \mu d_0$ for some positive integer $\mu$, does not remove any cube labeled with an element of the Ap{\'e}ry set with respect to $d_0$. 
\end{lemma}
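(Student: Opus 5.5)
The plan is a direct contrapositive argument: I will show that every cube lying in the region associated to $P$ carries a label $s$ with $s - \mu d_0 \in S$, where $S = \langle d_0,d_1,d_2,d_3\rangle$. Since $\mu \geq 1$, this gives $s - d_0 \in S$, so $s \notin \text{Ap}(S,d_0)$.

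First I will fix a cube $[[i,j,k]]$ that is removed. Its natural coordinates are nonnegative integers $(i,j,k)$. Being removed means the cube lies in $\{x>\mu_1,\ y>\mu_2,\ z>\mu_3\}$, and reading this on integer coordinates I will take it as $i \geq \mu_1$, $j \geq \mu_2$, $k \geq \mu_3$. One should check the boundary convention here. The cube $[i,i+1]\times[j,j+1]\times[k,k+1]$ meets the open region in positive volume exactly when $i+1>\mu_1$, $j+1>\mu_2$, $k+1>\mu_3$. Since everything is an integer, this is the same as $i \geq \mu_1$, $j\ge\mu_2$, $k\ge\mu_3$. This bookkeeping is the only mildly delicate step, and I expect no real obstacle there.

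Next I will set $i' = i-\mu_1$, $j' = j-\mu_2$, $k'=k-\mu_3$, all nonnegative integers. Then the label satisfies
\[
s = id_1+jd_2+kd_3 = \mu_1 d_1+\mu_2 d_2+\mu_3 d_3 + i'd_1+j'd_2+k'd_3 = \mu d_0 + i'd_1+j'd_2+k'd_3 .
\]
From this, $s - d_0 = (\mu-1)d_0 + i'd_1+j'd_2+k'd_3$, which is a nonnegative integer combination of the generators because $\mu\ge 1$. Hence $s-d_0\in S$, and $s\notin \text{Ap}(S,d_0)$.

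Finally, I will note that the argument never requires the $\mu_i$ to be nonnegative. Negative coordinates, as in the points $P_1,P_2,P_3$, only make the inequalities $i\ge\mu_1$ and so on automatic in those directions. Also, a label of a removed cube might coincide with a label of a surviving cube. This causes no problem: the statement concerns cubes labeled with Apéry elements, and we have shown that no removed cube has such a label.
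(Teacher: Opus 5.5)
Your proof is correct and is essentially the paper's argument. Both take a removed cube $[[i,j,k]]$ with $i\ge\mu_1$, $j\ge\mu_2$, $k\ge\mu_3$ (the same boundary convention the paper uses) and subtract the relation $\mu_1d_1+\mu_2d_2+\mu_3d_3=\mu d_0$ from its label. The only difference is cosmetic: the paper derives a contradiction with the minimality of the Apéry element in its residue class within $\langle d_1,d_2,d_3\rangle$, whereas you conclude directly from the definition that $s-d_0=(\mu-1)d_0+i'd_1+j'd_2+k'd_3\in S$.
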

\begin{proof}
    Note that the Ap{\'e}ry set with respect to $d_0$ is the set of the smallest nonnegative integers from each residue class modulo $d_0$ that are in the numerical semigroup $\langle d_1,d_2,d_3\rangle$. Suppose, to the contrary, that for some residue $r \in \{0,1,\dots,d_0-1\}$, a cube labeled with the smallest number $\lambda d_0 + r$ ($\lambda \in \mathbb{N}$) that is in $\langle d_1,d_2,d_3\rangle$, has been deleted. Let $[[\lambda_1,\lambda_2,\lambda_3]]$ be the deleted cube, which was labeled with $\lambda d_0 + r$. Since it is deleted by the region associated to $P$, we have $\lambda_1 \geq \mu_1$, $\lambda_2 \geq \mu_2$, $\lambda_3 \geq \mu_3$. We get
    \begin{equation}\label{1remlemmaeq}
     (\lambda_1 -\mu_1) d_1 + (\lambda_2-\mu_2) d_2 + (\lambda_3-\mu_3) d_3 = (\lambda-\mu) d_0 + r, 
    \end{equation}
    where all coefficients on the left-hand side are nonnegative. This means that the value \eqref{1remlemmaeq}, is congruent to $r$ modulo $d_0$, is nonnegative, is in $\langle d_1,d_2,d_3\rangle$, and is strictly less than $\lambda d_0 + r$, which contradicts the minimality of $\lambda d_0 + r$.
\end{proof}

\begin{thm}\label{theoremprocedure}
     Delete from the initial collection of cubes the regions associated to the points \eqref{positivepoints}, and for every triple $(a_{01},a_{02},a_{03}) \in \mathbb{N}^3$ satisfying \eqref{minrelation-0} delete the region associated to $(a_{01},a_{02},a_{03})$. Additionally, for all identities of the form
    $$ \lambda_0d_0 + \lambda_1d_1=\lambda_2d_2 + \lambda_3d_3,  $$
    with $\lambda_i \in \mathbb{Z}_{>0}$, $\lambda_2 < b_{22}$, and $\lambda_3 < b_{33}$, remove the region associated to the point $(-\lambda_1,\lambda_2,\lambda_3)$. For all identities of the form
    $$ \lambda_0d_0 + \lambda_2d_2=\lambda_1d_1 + \lambda_3d_3,  $$
    with $\lambda_i \in \mathbb{Z}_{>0}$, $\lambda_1 < b_{11}$, and $\lambda_3 < b_{33}$, remove the region associated to the point $(\lambda_1,-\lambda_2,\lambda_3)$. Finally, for all identities of the form
    $$ \lambda_0d_0 + \lambda_3d_3=\lambda_1d_1 + \lambda_2d_2,  $$
    with $\lambda_i \in \mathbb{Z}_{>0}$, $\lambda_1 < b_{11}$, and $\lambda_2 < b_{22}$, remove the region associated to the point $(\lambda_1,\lambda_2,-\lambda_3)$.
    Call the remaining collection of cubes the figure $R$. Then, the set of all cubes from the initial collection of cubes which are labeled with an element of the Ap{\'e}ry set is the figure $R$.
\end{thm}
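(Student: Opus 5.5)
The inclusion ``Apéry cubes $\subseteq R$'' follows from Lemma \ref{1remlemma}. The reverse inclusion I would get from a minimal-counterexample argument that uses the upward closure of the deleted regions.

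\textbf{Inclusion.} Every point whose region is deleted satisfies $xd_1+yd_2+zd_3=\mu d_0$ for some positive integer $\mu$:
\begin{itemize}
\item for $P_1,P_2,P_3$ we have $\mu=b_{10},b_{20},b_{30}>0$;
\item for a triple $(a_{01},a_{02},a_{03})$ from \eqref{minrelation-0} we have $\mu=a_{00}>0$;
\item for the three families of identities we have $\mu=\lambda_0>0$.
\end{itemize}
So Lemma \ref{1remlemma} applies to each deleted region, and no cube labeled with an element of $\text{Ap}(S,d_0)$ is removed. Note that a cube $[[i,j,k]]$ lies in the region associated to $(a,b,c)$ exactly when $i\ge a$, $j\ge b$, $k\ge c$. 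In particular every deleted region is upward closed.

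\textbf{Reverse inclusion: setting up.} Suppose some cube whose label $s=id_1+jd_2+kd_3$ is not in the Apéry set survives in $R$. Choose such a cube with $i+j+k$ minimal. If some lower neighbour, say $[[i-1,j,k]]$, also had a non-Apéry label, it would be deleted by minimality. By upward closure $[[i,j,k]]$ would then be deleted too. Hence every existing lower neighbour carries an Apéry label.

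Write $s-d_0=c_0d_0+c_1d_1+c_2d_2+c_3d_3$ with $c_\ell\ge 0$. If $i>0$ and $c_1>0$, then $(s-d_1)-d_0\in S$, so $s-d_1\notin\text{Ap}(S,d_0)$. This contradicts the previous paragraph, so $i=0$ or $c_1=0$, and likewise for the other two coordinates. Setting $\mu=c_0+1$, we obtain
$$(i-c_1)d_1+(j-c_2)d_2+(k-c_3)d_3=\mu d_0,$$
where each coordinate of $Q=(i-c_1,j-c_2,k-c_3)$ is either the corresponding cube coordinate or $-c_\ell\le 0$. The cube lies in the region of $Q$, so it remains to exhibit a deleted point that is componentwise $\le Q$. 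I would split by the sign pattern of $Q$.

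\textbf{Case: two coordinates of $Q$ are negative.} Say $Q=(i,-c_2,-c_3)$, so the cube is $[[i,0,0]]$ and $id_1=\mu d_0+c_2d_2+c_3d_3$ with $\mu>0$. By minimality of $b_{11}$ we get $i\ge b_{11}$. If $i>b_{11}$, then using the relation for $b_{11}$,
$$s-d_1-d_0=(i-1-b_{11})d_1+(b_{10}-1)d_0+b_{12}d_2+b_{13}d_3\in S,$$
so the lower neighbour $[[i-1,0,0]]$ is non-Apéry, which is impossible. Hence $i=b_{11}$ and $P_1\le[[b_{11},0,0]]$ componentwise.

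\textbf{Case: exactly one coordinate of $Q$ is negative.} Say $Q=(-c_1,j,k)$, so the cube is $[[0,j,k]]$ and $\mu d_0+c_1d_1=jd_2+kd_3$.
\begin{itemize}
\item If $j\ge b_{22}$, the same trick applies: for $j>b_{22}$ the lower neighbour $[[0,j-1,k]]$ is non-Apéry, so $j=b_{22}$. Then $P_2=(-b_{21},b_{22},-b_{23})$ is $\le(0,j,k)$.
\item Symmetrically, $k\ge b_{33}$ forces $k=b_{33}$ and $P_3\le(0,j,k)$.
\item Otherwise $j<b_{22}$ and $k<b_{33}$. If $c_1>0$, this is exactly one of the listed identities with $(\lambda_0,\lambda_1,\lambda_2,\lambda_3)=(\mu,c_1,j,k)$, and its point is $Q$ itself. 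If $c_1=0$, then $Q$ has no negative coordinate and we are in the next case.
\end{itemize}
The other two placements of the negative coordinate are identical.

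\textbf{Case: all coordinates of $Q$ are nonnegative.} Then $s=\mu d_0=id_1+jd_2+kd_3$, and one needs some minimal-relation triple $(a_{01},a_{02},a_{03})\le(i,j,k)$. This is the step I expect to be the main obstacle, because $\mu\ge a_{00}$ does not by itself give componentwise domination. I would first show $\mu=a_{00}$. If $\mu>a_{00}$, then $s-d_0=(\mu-1)d_0\in S$ has a representation $(\mu-1-a_{00})d_0+a_{01}d_1+a_{02}d_2+a_{03}d_3$ with $d_1,d_2,d_3$-coefficients exactly $(a_{01},a_{02},a_{03})$. Taking instead this representation of $s-d_0$ with $c_0$ maximal, the constraint ``$c_\ell=0$ whenever the cube coordinate is positive'' should force $(a_{01},a_{02},a_{03})$ to be supported only on coordinates where the cube coordinate vanishes. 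I would then exploit this to contradict the Apéry property of a lower neighbour, or reduce to the three-generator situation on the face where that coordinate is $0$.

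Once $\mu=a_{00}$ is established, $(i,j,k)$ itself is a triple satisfying \eqref{minrelation-0}. Its region was deleted, which contradicts survival and completes the argument.
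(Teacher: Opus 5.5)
Your minimal-counterexample reduction is sound whenever $Q$ has a negative coordinate. It is also a genuinely different route from the paper's. The paper subtracts the coordinates of the Ap\'ery cube in the same residue class from those of the offending cube and runs the sign analysis on the difference. You instead use the lower neighbours to force every representation of $s-d_0$ to be supported where the cube coordinates vanish.

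The gap is the last case, which you flag yourself. You do not prove $\mu=a_{00}$. You only say the support constraint ``should force'' a property of $(a_{01},a_{02},a_{03})$, and that you ``would then'' contradict the Ap\'ery property of a lower neighbour or reduce to three generators. Neither mechanism is what closes it; the contradiction has to come from a deleted region. For instance, take a surviving cube $[[i,j,0]]$ with $i,j>0$ and $a_{00}d_0=a_{03}d_3$. What removes it is the region of $(i,j,-a_{03})$ attached to the identity $(\mu-a_{00})d_0+a_{03}d_3=id_1+jd_2$, or $P_1$, $P_2$ when $i\ge b_{11}$ or $j\ge b_{22}$.

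The missing step is already in your hands, because your sign-pattern analysis was carried out for an \emph{arbitrary} representation of $s-d_0$. If $\mu>a_{00}$, apply it to the representation $s-d_0=(\mu-1-a_{00})d_0+a_{01}d_1+a_{02}d_2+a_{03}d_3$. The support constraint gives $a_{0\ell}=0$ wherever the corresponding cube coordinate is positive. Since $(a_{01},a_{02},a_{03})\neq(0,0,0)$, the point $Q'=(i-a_{01},j-a_{02},k-a_{03})$ has a negative coordinate. It satisfies $(i-a_{01})d_1+(j-a_{02})d_2+(k-a_{03})d_3=(\mu-a_{00})d_0$ with $\mu-a_{00}>0$. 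Your first two cases then show the cube was deleted, a contradiction. Hence $\mu\le a_{00}$, minimality of $a_{00}$ gives equality, and your final sentence finishes the proof. This is exactly the paper's closing move: subtract the minimal relation of $d_0$ and rerun the case analysis.

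Two smaller points. In the one-negative case you must check $j,k>0$, since the listed identities require all $\lambda_i\in\mathbb{Z}_{>0}$. This holds: if $j=0$, then $kd_3=\mu d_0+c_1d_1$ forces $k\ge b_{33}$, and symmetrically for $k=0$. Also, your detours proving $i=b_{11}$ or $j=b_{22}$ are unnecessary. The points $P_1,P_2,P_3$ already delete every cube with $x\ge b_{11}$, $y\ge b_{22}$, or $z\ge b_{33}$ respectively.
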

\begin{proof}
The condition $\lambda_i < b_{ii}$ ensures that we consider only the identities that matter --- the points \eqref{positivepoints} already delete the regions $\{x > b_{11}\}$, $\{y > b_{22}\}$, $\{z > b_{33}\}$. Lemma \ref{1remlemma} ensures that every cube labeled with an element of the Ap{\'e}ry set is in $R$. We need to prove that only the cubes labeled with elements of the Ap{\'e}ry set are in $R$. Suppose, to the contrary, that for some remainder $r \in \{0,1,\dots,d_0-1\}$, $\lambda d_0 + r$ and $\lambda' d_0 + r$, with $\lambda' > \lambda$, are labels in $R$. Now for some $\lambda_1, \lambda_2, \lambda_3,\lambda_1', \lambda_2', \lambda_3' \in \mathbb{N}$ we have
    \begin{equation}\label{eq1theorem}
        \lambda d_0 + r = \lambda_1d_1 + \lambda_2d_2 + \lambda_3d_3,
    \end{equation}
     \begin{equation}\label{eq2theorem}
        \lambda' d_0 + r = \lambda_1'd_1 + \lambda_2'd_2 + \lambda_3'd_3,
    \end{equation}
    with $[[\lambda_1,\lambda_2,\lambda_3]], [[\lambda_1',\lambda_2',\lambda_3']] \in R$.
    Subtracting \eqref{eq1theorem} from \eqref{eq2theorem} we get
    \begin{equation}\label{eq3theorem}
        (\lambda' - \lambda)d_0 = (\lambda_1' - \lambda_1)d_1 + (\lambda_2' - \lambda_2)d_2 + (\lambda_3' - \lambda_3)d_3,
    \end{equation}
    where $(\lambda' - \lambda)>0$, by assumption. Consider cases of how many coefficients on the right-hand side of \eqref{eq3theorem} are positive. The left-hand side is positive, hence, on the right-hand side, at least one of the coefficients is positive. 
    
If exactly one coefficient is positive, assume without loss of generality that $(\lambda_1' - \lambda_1)$ is the one. Moving the non positive integers to the other side gives
    \begin{equation}\label{eq4theorem}
        (\lambda' - \lambda)d_0  + (\lambda_2 - \lambda_2')d_2 + (\lambda_3 - \lambda_3')d_3= (\lambda_1' - \lambda_1)d_1.
    \end{equation} 
Since all the above coefficients are nonnegative and $(\lambda_1' - \lambda_1)$ and $(\lambda' - \lambda) $ are positive, by the minimality of $b_{11}$, we get $b_{11} \leq (\lambda_1' - \lambda_1) \leq \lambda_1'$. This is a contradiction with $[[\lambda_1',\lambda_2',\lambda_3']] \in R$.

If exactly two coefficients on the right-hand side of \eqref{eq3theorem} are positive, without loss of generality, suppose that $(\lambda_1' - \lambda_1)$ and $(\lambda_2' - \lambda_2)$ are the two. Moving $(\lambda_3' - \lambda_3)$ to the other side gets
    \begin{equation}\label{eq5theorem}
        (\lambda' - \lambda)d_0  + (\lambda_3 - \lambda_3')d_3= (\lambda_1' - \lambda_1)d_1  + (\lambda_2' - \lambda_2)d_2.
    \end{equation}
The above identity satisfies the conditions of the current theorem, thus the region $\{x > (\lambda_1' - \lambda_1), y > (\lambda_2' - \lambda_2)\}$ was deleted. Again, this contradicts $[[\lambda_1',\lambda_2',\lambda_3']] \in R$, as $\lambda'_1 \geq (\lambda_1' - \lambda_1)$ and $\lambda'_2 \geq (\lambda_2' - \lambda_2)$.

Now, only the case where all the coefficients in \eqref{eq3theorem} are positive is left. Let us substitute $\mu =(\lambda' - \lambda)  > 0$ and $\mu_i =(\lambda_i' - \lambda_i)  >0$ for $i = 1,2,3$. We have
    \begin{equation}\label{eq6theorem}
         \mu d_0 = \mu_1d_1 + \mu_2d_2 + \mu_3d_3,
    \end{equation}
and by the process of deleting the regions, the cube $[[\mu_1,\mu_2,\mu_3]]$ is also in $R$, as $\mu_i \leq \lambda'_i$ for $i = 1,2,3$. 
From the minimality of $a_{00}$, we get $a_{00} \leq \mu$. Now we subtract $a_{00}d_0 = a_{01}d_1 + a_{02}d_2 + a_{03}d_3$ from \eqref{eq6theorem} to get
\begin{equation}\label{eq7theorem}
    (\mu - a_{00}) d_0 = (\mu_1 - a_{01})d_1 +  (\mu_2 - a_{02})d_2 + (\mu_3 - a_{03})d_3.
\end{equation}
    
If $(\mu - a_{00}) = 0$, then 
the region associated to $(\mu_1,\mu_2,\mu_3)$ had been deleted, which contradicts $[[\mu_1,\mu_2,\mu_3]] \in R$. 

Finally, when $(\mu - a_{00}) > 0$ in \eqref{eq7theorem}, we repeat the same case by case consideration as we did with \eqref{eq3theorem}, which results in all the coefficients in \eqref{eq7theorem} being positive. However, this implies that the cube $[[\mu_1,\mu_2,\mu_3]]$, has been deleted by the region associated to $(a_{01}, a_{02}, a_{03})$, which is a contradiction. 
\end{proof}

In general, finding the minimal relations and all the identities used in Theorem \ref{theoremprocedure} is a rather difficult task. In order to use Theorem \ref{theoremprocedure} effectively, we need some additional lemmas.

\begin{lemma}\label{lemmaminrelation1}
    Let $R$ be as in Theorem \ref{theoremprocedure}. Delete from $R$ the region $\{x > a_{11}\}$. Then every element of the Ap{\'e}ry set still labels some remaining cube. The same will be true if we instead delete the region $\{y > a_{22}\}$ or $\{z > a_{33}\}$.
\end{lemma}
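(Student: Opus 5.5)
The approach is a minimality (descent) argument on the $x$-coordinate, using the minimal relation of $d_1$ to trade copies of $d_1$ for $d_2$ and $d_3$ without changing the label. Throughout, recall that the region $\{x > a_{11}\}$ removes exactly the cubes $[[i,j,k]]$ with $i \geq a_{11}$. This matches the convention used in the proof of Lemma \ref{1remlemma}, where a cube $[[\lambda_1,\lambda_2,\lambda_3]]$ is deleted by the region associated to $(\mu_1,\mu_2,\mu_3)$ exactly when $\lambda_i \geq \mu_i$ for all $i$.

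Fix an element $w \in \text{Ap}(S,d_0)$, where $S=\langle d_0,d_1,d_2,d_3\rangle$. Since $w-d_0\notin S$, the factorization of $w$ cannot use $d_0$, so $w \in \langle d_1,d_2,d_3\rangle$. Hence $w$ labels at least one cube of the initial collection. By Theorem \ref{theoremprocedure}, the figure $R$ is precisely the set of all cubes labeled by Apéry elements, so \emph{every} cube labeled $w$ lies in $R$. Among all cubes $[[i,j,k]]$ labeled $w$, I choose one with $i$ minimal. I then want to show $i < a_{11}$, so that this cube survives the deletion of $\{x > a_{11}\}$.

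Suppose instead that $i \geq a_{11}$. Substituting the minimal relation $a_{11}d_1 = a_{10}d_0 + a_{12}d_2 + a_{13}d_3$ gives
$$ w = a_{10}d_0 + (i-a_{11})d_1 + (j+a_{12})d_2 + (k+a_{13})d_3, $$
with all coefficients nonnegative. There are two cases.
\begin{itemize}
\item If $a_{10} > 0$, then $w - d_0 = (a_{10}-1)d_0 + \dots$ lies in $S$. This contradicts $w \in \text{Ap}(S,d_0)$.
\item If $a_{10} = 0$, then the cube $[[i-a_{11},\, j+a_{12},\, k+a_{13}]]$ is also labeled $w$. Its first coordinate is $i - a_{11} < i$, since $a_{11} > 0$. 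This contradicts the minimality of $i$.
\end{itemize}
Therefore $i < a_{11}$, and $w$ still labels a remaining cube.

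The cases $\{y > a_{22}\}$ and $\{z > a_{33}\}$ follow by the identical argument, minimizing the $y$- or $z$-coordinate and using the corresponding minimal relation of $d_2$ or $d_3$. The only delicate point is the one handled in the second paragraph: the replacement cube must be known to lie in $R$. This is exactly what Theorem \ref{theoremprocedure} provides, because it identifies $R$ with the full set of cubes labeled by Apéry elements, so the minimization may range over all such cubes.
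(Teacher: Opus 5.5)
Your proposal is correct and follows essentially the paper's proof. Both use the minimal relation of $d_1$ (with $a_{10}=0$) to push the $x$-coordinate of a cube labeled by an Apéry element below $a_{11}$, and both invoke Theorem \ref{theoremprocedure} to know that the new cube still lies in $R$. The only differences are presentational: you run a descent on the minimal $x$-coordinate and rule out $a_{10}>0$ directly via the Apéry property, whereas the paper first disposes of the case $a_{11}=b_{11}$ and then subtracts $\lambda$ copies of the relation at once.
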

\begin{proof}
If $a_{11} = b_{11}$, there is nothing to prove. Assume $a_{11} < b_{11}$, which implies $a_{10} = 0$. Consider an element of the Ap{\'e}ry set, which has the form $\lambda_1d_1 + \lambda_2d_2 + \lambda_3d_3$, where $\lambda_i \in \mathbb{N}$ and $\lambda_1 \geq a_{11}$. Then there exists a positive integer $\lambda$ such that $a_{11} > (\lambda_1 - \lambda a_{11}) \geq 0$. Using $a_{11}d_1 = a_{12}d_2 + a_{13}d_3$, we obtain
    \begin{equation}
        \lambda_1d_1 + \lambda_2d_2 + \lambda_3d_3 = (\lambda_1 - \lambda a_{11})d_1 + (\lambda_2+\lambda a_{12})d_2 + (\lambda_3+\lambda a_{13})d_3.
    \end{equation}
As every cube labeled with an element of the Ap{\'e}ry set is in $R$, we have $[[\lambda_1 - \lambda a_{11},\lambda_2+\lambda a_{12},\lambda_3+\lambda a_{13} ]] \in R \cap \{x \leq a_{11}\}$ and the claim follows.   
\end{proof}

\begin{lemma}\label{lemmaminrelation2}
    Let $R$ be as in Theorem \ref{theoremprocedure}. Delete from $R$ the regions $\{x > a_{11}\}$, $\{y > a_{22}\}$, and $\{z > a_{33}\}$. Then, in the resulting collection, each element of the Ap{\'e}ry set labels at most one cube.
\end{lemma}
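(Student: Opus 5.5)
The plan is to argue directly from the definitions of the minimal relations, without going back through the case analysis of Theorem \ref{theoremprocedure}.

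First I fix what the deletions leave. The cube $[[i,j,k]]$ has interior points with $x>a_{11}$ exactly when $i\ge a_{11}$, so deleting $\{x>a_{11}\}$ removes precisely the cubes with $i\ge a_{11}$. The same holds for the other two regions. Hence every remaining cube $[[\lambda_1,\lambda_2,\lambda_3]]$ satisfies $\lambda_1<a_{11}$, $\lambda_2<a_{22}$ and $\lambda_3<a_{33}$. Membership in $R$ will not be needed beyond this, and I will in fact prove the stronger statement that no two cubes in this box carry the same label.

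Now suppose two distinct remaining cubes $[[\lambda_1,\lambda_2,\lambda_3]]$ and $[[\lambda_1',\lambda_2',\lambda_3']]$ carry the same label. Subtracting the two labels gives
$$(\lambda_1-\lambda_1')d_1+(\lambda_2-\lambda_2')d_2+(\lambda_3-\lambda_3')d_3=0,$$
where not all coefficients vanish because the cubes are distinct. Since the $d_i$ are positive, the nonzero coefficients cannot all have the same sign. After possibly swapping the two cubes, exactly one coefficient has one sign and the remaining nonzero coefficients (one or two of them) have the opposite sign. Swapping the cubes multiplies every coefficient by $-1$, so we may choose the orientation in which the lone coefficient is positive.

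Say, without loss of generality, that the lone positive coefficient is $\mu_i=\lambda_i-\lambda_i'>0$. Moving the other terms to the right-hand side gives $\mu_i d_i=\sum_{j\neq i}\nu_j d_j$ with all $\nu_j\ge 0$. This is a relation of the shape in the definition of the minimal relation of $d_i$, with $d_0$-coefficient equal to $0$, which is allowed there since only nonnegativity is required. By the minimality of $a_{ii}$ we get
$$a_{ii}\le \mu_i\le \lambda_i<a_{ii},$$
which is a contradiction.

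The only step needing care is this sign bookkeeping. The point is that, whatever the sign pattern, some single coordinate direction carries a positive coefficient while all the others are nonpositive, so it is always the bound in that one coordinate that breaks. With that in place, every element of the Apéry set labels at most one cube of the resulting collection. Combined with Lemma \ref{lemmaminrelation1}, applied three times, it labels exactly one.
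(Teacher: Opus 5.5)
Your proof of the statement is correct and is essentially the paper's argument: the surviving cubes lie in the box $\lambda_i<a_{ii}$, and a repeated label gives a relation with one positive coefficient and the others nonpositive, so minimality of that $a_{ii}$ yields the contradiction; like the paper, you never use membership in $R$. However, drop your final sentence, which is false in general: Lemma \ref{lemmaminrelation1} only justifies removing one of the three half-spaces from $R$, because its rewriting raises the other two coordinates, possibly past $a_{22}$ or $a_{33}$; this is exactly why Proposition \ref{proptrick} assumes $a_{ii}=b_{ii}$ for two indices, and for $\langle 22,38,55,57\rangle$ the box contains only $3\cdot 2\cdot 2=12<22$ cubes.
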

\begin{proof}
     Suppose, to the contrary, that for some $\lambda_1, \lambda_2, \lambda_3,\lambda_1', \lambda_2', \lambda_3' \in \mathbb{N}$ with $\lambda_i ,\lambda'_i < a_{ii}$ for $i = 1,2,3$, we have
    \begin{equation}\label{propeq1}
        \lambda_1d_1 + \lambda_2d_2 + \lambda_3d_3 = \lambda_1'd_1 + \lambda_2'd_2 + \lambda_3'd_3 
    \end{equation}
    \begin{equation}\label{propeq2}
        \implies  (\lambda_1' - \lambda_1)d_1 + (\lambda_2' - \lambda_2)d_2 + (\lambda_3' - \lambda_3)d_3 = 0.
    \end{equation}
    If all coefficients in \eqref{propeq2} are zero, we have nothing to prove. Therefore, either one or two coefficients in \eqref{propeq2} are positive. Regardless, without loss of generality, we can assume that
    \begin{equation}\label{propeq3}
         (\lambda_1' - \lambda_1)d_1 = (\lambda_2 - \lambda_2')d_2 + (\lambda_3 - \lambda_3')d_3,
    \end{equation}
    where all the coefficients in the above equation are nonnegative. However, $(\lambda_1' - \lambda_1)$ is positive, because otherwise all the coefficients would be zero. This implies $a_{11} \leq (\lambda_1' - \lambda_1) \leq \lambda_1'$, which is a contradiction.
\end{proof}

\begin{prop}\label{proptrick}
    Suppose that $a_{ii} = b_{ii}$ for at least two distinct indices $i \in \{1,2,3\}$. Let $R$ be as in Theorem \ref{theoremprocedure}. Delete from $R$
    the regions $\{x > a_{11}\}$, $\{y > a_{22}\}$ and $\{z > a_{33}\}$. Then each element of the Ap{\'e}ry set labels exactly one cube in the resulting collection.
\end{prop}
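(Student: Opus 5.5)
The plan is to split the statement into an \emph{existence} part (every element of the Ap\'ery set still labels some cube after the three deletions) and a \emph{uniqueness} part (it labels at most one). Uniqueness is exactly Lemma \ref{lemmaminrelation2}, which applies to $R$ with the same three regions $\{x > a_{11}\}$, $\{y > a_{22}\}$, $\{z > a_{33}\}$ deleted. So only existence needs an argument.

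For existence, assume without loss of generality that $a_{22}=b_{22}$ and $a_{33}=b_{33}$; the cases $\{1,2\}$ and $\{1,3\}$ follow by permuting the roles of the coordinates. The first step is to show that deleting $\{y > a_{22}\}$ and $\{z > a_{33}\}$ removes nothing from $R$.
\begin{itemize}[label={}, leftmargin=1em]
\item Since $a_{22}=b_{22}$, the region $\{y>a_{22}\}$ coincides with $\{y > b_{22}\}$.
\item The point $P_2=(-b_{21},b_{22},-b_{23})$ has $-b_{21}\le 0$ and $-b_{23}\le 0$, because the $b_{ij}$ are nonnegative.
\item Every cube $[[i,j,k]]$ of the initial collection has $i,k \ge 0$. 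Hence such a cube lies in the region associated to $P_2$ precisely when it lies in $\{y > b_{22}\}$, i.e.\ when $j \ge b_{22}$.
\item So all cubes in $\{y>a_{22}\}$ were already deleted when forming $R$. The identical argument with $P_3$ handles $\{z > a_{33}\}$.
\end{itemize}
Consequently, the final collection equals $R \cap \{x \le a_{11}\}$, meaning the cubes $[[i,j,k]]\in R$ with $i<a_{11}$.

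The second step applies Lemma \ref{lemmaminrelation1}. Its proof shows more than its statement: for any Ap\'ery element labelling a cube $[[\lambda_1,\lambda_2,\lambda_3]]$ with $\lambda_1\ge a_{11}$, the cube $[[\lambda_1-\lambda a_{11},\lambda_2+\lambda a_{12},\lambda_3+\lambda a_{13}]]$ has first coordinate less than $a_{11}$ and carries the same label. By Theorem \ref{theoremprocedure}, every cube labelled with an Ap\'ery element lies in $R$, so this new cube lies in $R\cap\{x\le a_{11}\}$. If $\lambda_1<a_{11}$ already, nothing needs to be done. Hence every Ap\'ery element labels at least one cube of the final collection. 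Combined with Lemma \ref{lemmaminrelation2}, it labels exactly one.

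The only delicate point, and the reason the hypothesis is needed, is that Lemma \ref{lemmaminrelation1} handles one coordinate at a time. Shifting along the relation for $d_1$ increases the $y$- and $z$-coordinates, which could push the cube into $\{y>a_{22}\}$ or $\{z>a_{33}\}$ if those were genuinely new cuts. The hypothesis $a_{ii}=b_{ii}$ for two indices makes those two cuts redundant, so only one genuine cut remains. I expect the main care to go into the boundary conventions: checking that ``region $\{x>a\}$'' removes exactly the cubes with $i\ge a$, and that the region associated to $P_2$ coincides with $\{y>b_{22}\}$ on the first octant.
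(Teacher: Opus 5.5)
Your proposal is correct and matches the paper's proof: uniqueness comes from Lemma \ref{lemmaminrelation2}, and existence comes from Lemma \ref{lemmaminrelation1}, because when $a_{ii}=b_{ii}$ the cut $\{y>a_{22}\}$ (resp.\ $\{z>a_{33}\}$) coincides, on the first octant, with the region of $P_2$ (resp.\ $P_3$) already removed in forming $R$. You make this redundancy explicit and fold the case $a_{ii}=b_{ii}$ for all three indices into the same argument, whereas the paper treats that case separately and leaves the redundancy implicit.
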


\begin{proof}
    If $a_{ii} = b_{ii}$ for all $i = 1,2,3$, then Lemma \ref{lemmaminrelation2} gives the result. Otherwise, we have $a_{ii} < b_{ii}$ for exactly one $i \in \{1,2,3\}$. In such case, Lemma \ref{lemmaminrelation1} together with Lemma \ref{lemmaminrelation2} give the result.
\end{proof}
\noindent
Define an \emph{$L$-shape} as a subset of the initial collection of cubes such that:
\begin{itemize}
    \item all its labels are elements of the Ap{\'e}ry set,
    \item every element of the Ap{\'e}ry set labels exactly one cube in it,
    \item if a cube $[[a,b,c]]$, $(a,b,c) \in \mathbb{N}^3$, is not in it, then neither is any cube in the region associated to $(a,b,c)$.
\end{itemize}
In particular, the collection from Proposition \ref{proptrick} is an $L$-shape. This notation is inherited from \cite{AGUILOGOST2015} and is established in the literature.

It turns out that when the conditions of Proposition \ref{proptrick} are met, we can employ the strategy of essentially guessing the Ap{\'e}ry set. We already mentioned in the introduction that a general polynomial formula for the Frobenius number does not exist. When we seek the Frobenius number of a special family of numerical semigroups, for which we suspect the existence of a polynomial formula, we also expect that the corresponding Ap{\'e}ry set --- and also the figure $R$ --- will also exhibit a certain regularity. To test these conjectures, we can examine numerical examples with the aid of an algebraic calculator, thus clarifying the behavior of the family in question.
If a family of numerical semigroups is well-behaved enough, the minimal relations and the identities from Theorem \ref{theoremprocedure} should have a form that we should be able to predict. 

If the conditions of  Proposition \ref{proptrick} are satisfied, it tells us 
that we are able to remove enough regions to leave an $L$-shape. This is crucial since an $L$-shape has exactly $d_0$ cubes. In this way, we do not need to prove any properties of the equations we have found, but we can just use Lemmas \ref{1remlemma} and \ref{lemmaminrelation1}, to remove the corresponding regions and leave the Ap{\'e}ry set behind. If we are left with exactly $d_0$ cubes, it means that we have found an $L$-shape. We will exploit this idea in Sections \ref{findingtheFnumber} and \ref{triangularnumbers}.   

\begin{remark}
    If the conditions of Proposition \ref{proptrick} are not met with respect to $d_0$, we could, for example, look for the Ap{\'e}ry set with respect to $d_1$ instead, if for such generator the conditions for applying Proposition \ref{proptrick} are satisfied. We will see that in Sections \ref{findingtheFnumber} and \ref{triangularnumbers}, such a switch was possible or was not needed. However, it is possible that we cannot do this. Consider the numerical semigroup $\langle 22,38,55,57 \rangle$. It has embedding dimension four and the minimal relations can be seen to be: 
    \begin{align*}
        5 \cdot 22 = 0\cdot 38 + 2 \cdot 55 + 0\cdot57, \\
        3 \cdot 38 = 0\cdot 22 + 0\cdot 55 +2 \cdot 57, \\
        2 \cdot 55 = 5 \cdot 22 + 0\cdot 38 + 0\cdot 57,\\
        2 \cdot 57 = 0\cdot 22 +3 \cdot 38 + 0\cdot 55.
    \end{align*}
    We would like to be able to apply the proposed method for finding the Ap{\'e}ry set to all numerical semigroups with embedding dimension four, not just to those which satisfy the assumptions of Proposition \ref{proptrick}.
\end{remark}

For the next lemma, let $b_{i,mm}$ denote the coefficient of $d_m$ in the $d_i$-positive minimal relation of $d_m$ (with this notation $b_{0,mm} = b_{mm}$).

\begin{lemma}\label{lemma2cases}
     There exists an arrangement $\{i,j,k,l\} = \{0,1,2,3\}$ such that $a_{mm} = b_{i,mm}$ for at least two distinct indices $m \in \{j,k,l\}$, or $a_{ii}d_i = a_{jj}d_j$ and $a_{kk}d_k = a_{ll}d_l$.
\end{lemma}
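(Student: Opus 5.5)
The idea is to encode, for each generator, which other generators can appear in its minimal relations, and then count. For $m \in \{0,1,2,3\}$, let $U(m) \subseteq \{0,1,2,3\}\setminus\{m\}$ be the set of indices $i$ for which some minimal relation $a_{mm}d_m = \sum_{i\neq m} a_{mi}d_i$ has $a_{mi}>0$. Since $a_{mm}d_m>0$, each $U(m)$ is nonempty.

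The first step is the observation that for $i\neq m$,
\[
a_{mm}=b_{i,mm} \iff i\in U(m).
\]
If $i \in U(m)$, then that minimal relation is itself $d_i$-positive, so $b_{i,mm}\le a_{mm}$; the reverse inequality $a_{mm}\le b_{i,mm}$ is automatic by minimality of $a_{mm}$. Conversely, if $b_{i,mm}=a_{mm}$, then the $d_i$-positive relation is a minimal relation with positive $d_i$-coefficient. Hence the first alternative of the lemma fails for base index $i$ exactly when $i$ lies in $U(m)$ for at most one $m\neq i$.

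Assume the first alternative fails for every $i$. Double counting gives
\[
4\le \sum_m |U(m)| = \sum_i \#\{m : i\in U(m)\}\le 4.
\]
So every $U(m)$ is a singleton $\{f(m)\}$, and every $i$ lies in exactly one $U(m)$. Thus $f$ is a fixed-point-free permutation of $\{0,1,2,3\}$, and every minimal relation of $d_m$ has the form $a_{mm}d_m = c_m d_{f(m)}$ with $c_m>0$. By minimality of $a_{f(m)f(m)}$ we get $c_m\ge a_{f(m)f(m)}$.

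The main step is to show that $c_m = a_{f(m)f(m)}$; I expect this to be the only delicate point. Write $f=f(m)$ and suppose $c_m>a_{ff}$. Substituting $a_{ff}d_f=c_f d_{f(f)}$ gives
\[
a_{mm}d_m=(c_m-a_{ff})d_f+c_f d_{f(f)},
\]
which is again a minimal relation. If $f(f)\neq m$, this relation shows that $f(f)\in U(m)$, contradicting $|U(m)|=1$. If $f(f)=m$, then we have both $c_m d_f=a_{mm}d_m$ and $a_{ff}d_f=c_f d_m$ with $c_f\ge a_{mm}$. Subtracting yields
\[
(c_m-a_{ff})d_f=(a_{mm}-c_f)d_m .
\]
The left side is nonnegative and the right side is nonpositive, so both vanish. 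This gives $c_m=a_{ff}$ in that case as well. In every case, therefore, $a_{mm}d_m=a_{f(m)f(m)}d_{f(m)}$ for all $m$.

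Finally, a fixed-point-free permutation of four elements is either a product of two $2$-cycles or a $4$-cycle.
\begin{itemize}
\item If $f=(i\,j)(k\,l)$, the equalities above give $a_{ii}d_i=a_{jj}d_j$ and $a_{kk}d_k=a_{ll}d_l$ directly.
\item If $f$ is a $4$-cycle, chaining the equalities shows that all four products $a_{mm}d_m$ are equal, so any split into two pairs works.
\end{itemize}
In either case the second alternative of the lemma holds.
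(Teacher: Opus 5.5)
Your proof is correct and follows the paper's argument. Both use the same double count: each generator's minimal relations must involve some other generator, while failure of the first alternative lets each index appear in the minimal relations of at most one other generator. This forces every minimal relation to have the single-term form $a_{mm}d_m = c_m d_{f(m)}$, and the same substitution argument, split according to whether $f(f(m)) = m$, then gives $c_m = a_{f(m)f(m)}$.

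Your explicit observation that $f$ is a fixed-point-free permutation, together with the cycle-type analysis, makes the paper's final step ``repeating the same argument for $\{k,l\}$'' precise. That step tacitly has to allow the $4$-cycle case, in which all four products $a_{mm}d_m$ coincide.
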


\begin{proof}
    Suppose that for every arrangement $\{i,j,k,l\} = \{0,1,2,3\}$ we have $a_{mm} < b_{i,mm}$ for at least two distinct indices $m \in \{j,k,l\}$. This implies that at least two of $a_{ji}, a_{ki},a_{li}$ are equal to zero for every such arrangement.

    Since for any arrangement $\{i,j,k,l\} = \{0,1,2,3\} $ at most two of $a_{ij}, a_{ik},a_{il}$ can be zero, it follows that for every index $i \in \{0,1,2,3\}$ every minimal relation of $d_i$ is of the form $a_{ii}d_i = a_{ij}d_j$ for some $j \in \{0,1,2,3\} \setminus \{i\}$. 

    Observe that $a_{ij} \geq a_{jj}$. If $a_{ij} > a_{jj}$, then
    \begin{equation}\label{notprop-eq1}
        a_{ii}d_i = a_{ij}d_j = ( a_{ij}-a_{jj})d_j + a_{jm}d_m,
    \end{equation}
    for some $m \in \{0,1,2,3\} \setminus \{j\}$. If $m = i$, subtracting $d_i$ from both sides of \eqref{notprop-eq1} contradicts the minimality of $a_{ii}$. If $m \neq i$, we get a contradiction with the fact that every minimal relation of $d_i$ has the form $a_{ii}d_i = a_{ij}d_j$. 
    
    Hence $a_{ij} = a_{jj}$, and therefore $a_{ii}d_i = a_{jj}d_j$. Repeating the same argument for the indices $\{k,l\} = \{0,1,2,3\} \setminus \{i,j\}$ yields the claim.
\end{proof}

If $a_{mm} = b_{i,mm}$ for at least two distinct indices $m \in \{j,k,l\}$ for some arrangement $\{i,j,k,l\} = \{0,1,2,3\}$, then clearly we can apply Proposition \ref{proptrick} to such semigroup (perhaps rearranging the generators). 

By Lemma \ref{lemma2cases}, if Proposition \ref{proptrick} cannot be applied to a numerical semigroups with embedding dimension four, then $a_{ii}d_i = a_{jj}d_j$ and $a_{kk}d_k = a_{ll}d_l$ for some arrangement $\{i,j,k,l\} = \{0,1,2,3\}$. Without loss of generality, we may assume that $a_{00}d_0 = a_{11}d_1$ and $a_{22}d_2 = a_{33}d_3$. For all such semigroups, the following proposition provides a result of the same strength as Proposition \ref{proptrick}.  

\begin{prop}\label{propnottrick}
    Suppose that $a_{11} = b_{11}$ and $a_{22}d_2 = a_{33}d_3$. Let $R$ be as in Theorem \ref{theoremprocedure}. Delete from $R$ the region $\{z > a_{33}\}$. Then the resulting figure is an $L$-shape.
\end{prop}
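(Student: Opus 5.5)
The plan is to check the three defining properties of an $L$-shape for the figure $R' := R\cap\{z\le a_{33}\}$. Throughout I use the convention already implicit in the proofs above: a cube $[[\lambda_1,\lambda_2,\lambda_3]]$ is removed by the region associated to $(\mu_1,\mu_2,\mu_3)$ exactly when $\lambda_i\ge\mu_i$ for all $i$. In particular, $R'$ consists of the cubes of $R$ with third coordinate $<a_{33}$.

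\emph{The first and third properties.} The first property holds because every label of $R$ lies in the Ap\'ery set, by Theorem \ref{theoremprocedure}. For the third, note that $R'$ is obtained from the initial collection by deleting only sets of the form $\{x\ge a,\ y\ge b,\ z\ge c\}$ (on cube coordinates), with some bounds possibly vacuous. Each such set is closed upward. Hence if $[[a,b,c]]\notin R'$, every cube in the region associated to $(a,b,c)$ is also missing.

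\emph{Existence.} Fix an element of the Ap\'ery set. By Theorem \ref{theoremprocedure} it labels some cube $[[\lambda_1,\lambda_2,\lambda_3]]\in R$. The relation $a_{22}d_2=a_{33}d_3$ lets me imitate Lemma \ref{lemmaminrelation1}. Choose $\lambda\ge 0$ with $0\le \lambda_3-\lambda a_{33}<a_{33}$. Then the cube
$$[[\lambda_1,\ \lambda_2+\lambda a_{22},\ \lambda_3-\lambda a_{33}]]$$
carries the same label. Since all cubes labeled by Ap\'ery elements lie in $R$, this cube is in $R$, and its third coordinate is $<a_{33}$, so it lies in $R'$.

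\emph{Uniqueness, which I expect to be the main step.} Suppose two distinct cubes $[[\lambda_1,\lambda_2,\lambda_3]]$ and $[[\lambda_1',\lambda_2',\lambda_3']]$ of $R'$ carry the same label. Set $\Delta_i=\lambda_i'-\lambda_i$, so that $\Delta_1d_1+\Delta_2d_2+\Delta_3d_3=0$. I will use two bounds. First, $|\Delta_3|<a_{33}$, since both third coordinates are $<a_{33}$. Second, the points \eqref{positivepoints} already cut $x\ge b_{11}=a_{11}$, so $|\Delta_1|<a_{11}$. Not all $\Delta_i$ vanish, and they cannot all share a sign, so exactly one variable stands alone on one side of the relation. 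After swapping the two cubes if necessary, that variable has positive coefficient. There are three cases.
\begin{itemize}
\item If $d_1$ stands alone, then $\Delta_1d_1=(-\Delta_2)d_2+(-\Delta_3)d_3$ with nonnegative coefficients and $\Delta_1>0$. Minimality of $a_{11}$ forces $\Delta_1\ge a_{11}$, a contradiction.
\item If $d_3$ stands alone, the same argument with minimality of $a_{33}$ gives $\Delta_3\ge a_{33}$, a contradiction.
\item If $d_2$ stands alone, we have $\Delta_2d_2=\Delta_1'd_1+\Delta_3'd_3$ with $\Delta_1',\Delta_3'\ge 0$ and $\Delta_3'<a_{33}$. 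Here I write $\Delta_2=qa_{22}+r$ with $0\le r<a_{22}$. We have $q\ge 1$ by minimality of $a_{22}$. Substituting $qa_{22}d_2=qa_{33}d_3$ gives
$$\Delta_1'd_1=r\,d_2+(qa_{33}-\Delta_3')d_3,$$
where $qa_{33}-\Delta_3'>0$. This forces $\Delta_1'>0$, and then minimality of $a_{11}$ gives $\Delta_1'\ge a_{11}$, contradicting $|\Delta_1|<a_{11}$.
\end{itemize}
This completes the case analysis. The delicate part is the third case, where neither bound applies directly and one must trade $d_2$'s for $d_3$'s via $a_{22}d_2=a_{33}d_3$ before minimality of $a_{11}$ can be invoked.
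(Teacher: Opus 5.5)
Your proof is correct and follows essentially the paper's argument. Your three uniqueness cases are exactly the paper's three sign cases, including the key step of trading $a_{22}d_2$ for $a_{33}d_3$ before invoking minimality of $a_{11}=b_{11}$. The differences are minor: for existence you shift directly by $\lambda(0,a_{22},-a_{33})$ as in Lemma \ref{lemmaminrelation1}, where the paper instead uses a label-preserving bijection between $R\cap\{y\ge a_{22}\}$ and $R\cap\{z\ge a_{33}\}$ and leaves the iteration implicit; and you also verify explicitly the first and third $L$-shape properties, which the paper takes for granted.
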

\begin{proof}
    We have an injection between the cubes of $R \cap \{ y \geq a_{22}\}$ and $R \cap \{ z \geq a_{33}\}$, which sends $[[\mu_1,\mu_2,\mu_3]] \in R \cap \{ y \geq a_{22}\} $ to $[[\mu_1,\mu_2 - a_{22},\mu_3 + a_{33}]] \in R \cap \{ z \geq a_{33}\}$. The cube $[[\mu_1,\mu_2 - a_{22},\mu_3 + a_{33}]]$ is in $R$ by Theorem \ref{theoremprocedure}, because it is labeled with an element of the Ap{\'e}ry set. Similarly, we can define an injection in the opposite direction, which is an inverse of the first map. Hence, the correspondence is a bijection. Because this bijection preserves labels, in order to obtain an $L$-shape we must delete either the region $\{y > a_{22}\}$ or $\{z > a_{33}\}$. We choose to delete $\{z > a_{33}\}$ and let $T$ be the resulting figure.   
     
    Suppose that some label in $T$ appears twice. This gives
    \begin{equation}\label{propnottrick-eq1}
        \mu_1d_1 + \mu_2d_2 + \mu_3d_3 = \nu_1d_1 + \nu_2d_2 + \nu_3d_3,
    \end{equation}
    for some $\mu_i, \nu_i \in \mathbb{N}$, and where $[[\mu_1,\mu_2,\mu_3]]$ and $[[\nu_1,\nu_2,\nu_3]]$ are two distinct cubes in $T$. Without loss of generality, assume $\mu_2 \geq \nu_2$. 
    
    If $\mu_1 \leq \nu_1$ and $\mu_3 \leq \nu_3$, we obtain
    \begin{equation}\label{propnottrick-eq2}
         (\mu_2-\nu_2)d_2  = (\nu_1-\mu_1)d_1  + (\nu_3-\mu_3)d_3,
    \end{equation}
    where all coefficients above are nonnegative, which implies $(\mu_2-\nu_2) \geq a_{22}$. If $\nu_3 \geq (\nu_3-\mu_3) \geq a_{33}$, we get a contradiction with $[[\nu_1,\nu_2,\nu_3]] \in T$, hence $(\nu_3-\mu_3) < a_{33}$. This gives
    \begin{equation}\label{propnottrick-eq21}
         (\mu_2-\nu_2-a_{22})d_2 + (a_{33}-\nu_3+\mu_3)d_3  = (\nu_1-\mu_1)d_1,
    \end{equation}
    where all coefficients above are nonnegative and $(a_{33}-\nu_3+\mu_3)$ is positive. Hence $(\nu_1-\mu_1)$ is also positive, which implies $\nu_1 \geq (\nu_1-\mu_1) \geq a_{11} = b_{11}$, contradicting $[[\nu_1,\nu_2,\nu_3]] \in T$.

    If $\mu_1 > \nu_1$ and $\mu_3 \leq \nu_3$, then
    \begin{equation}\label{propnottrick-eq3}
         (\mu_2-\nu_2)d_2 + (\mu_1-\nu_1)d_1  =  (\nu_3-\mu_3)d_3,
    \end{equation}
    which implies $\nu_3 \geq (\nu_3-\mu_3) \geq a_{33}$, so a contradiction with $[[\nu_1,\nu_2,\nu_3]] \in T$.

    Lastly, if $\mu_1 \leq \nu_1$ and $\mu_3 > \nu_3$, then
    \begin{equation}\label{propnottrick-eq4}
         (\mu_2-\nu_2)d_2 + (\mu_3-\nu_3)d_3  =  (\nu_1-\mu_1)d_1,
    \end{equation}
    which implies $\nu_1 \geq (\nu_1-\mu_1) \geq a_{11} = b_{11}$, so again a contradiction with $[[\nu_1,\nu_2,\nu_3]] \in T$.

    In total, every element of the Ap{\'e}ry set appears as a label in $T$, and each label in $T$ is distinct. Hence $T$ is an $L$-shape.
\end{proof}

\subsection{Additional results}

Suppose that from the initial collection of cubes we have removed regions whose removal satisfied the assumptions of Lemma \ref{1remlemma} or \ref{lemmaminrelation1}, ensuring that every element of the Ap{\'e}ry set still labels a cube in the resulting figure. Assume further that the resulting collection contains exactly $d_0$ cubes; this implies that we have obtained an $L$-shape. We would like to determine whether the equations we used --- specifically, relations of the form $\lambda_id_i = \lambda_jd_j + \lambda_kd_k + \lambda_ld_l$ --- are in fact the minimal relations or the $d_0$-positive minimal relations.

\begin{prop}\label{propminimalrelations}
     Suppose that we have deleted from the initial collection of cubes the regions associated to certain points of the forms $(-\lambda_1,\lambda_2,\lambda_3)$, $(\lambda_1,-\lambda_2,\lambda_3)$, $(\lambda_1,\lambda_2,-\lambda_3)$, with $\lambda_i \in \mathbb{Z}_{>0}$ and each satisfying that $xd_1 + yd_2 + zd_3$ is a positive multiple of $d_0$. Furthermore, we have deleted the regions associated to certain points $(a_{01},a_{02},a_{03}) \in \mathbb{N}^3$ satisfying \eqref{minrelation-0}. Finally, we have also deleted the regions $\{x > b_{11}'\}$, $\{y > b_{22}'\}$, and $\{z > a_{33}'\}$, where: 
    \begin{align*}
        b_{11}'d_1 &= b_{10}'d_0 + b_{12}'d_2 + b_{13}'d_3,   \\
        b_{22}'d_2 &= b_{20}'d_0 + b_{21}'d_1 + b_{23}'d_3,   \\
        a_{33}'d_3 &= a_{30}'d_0 + a_{31}'d_1 + a_{32}'d_2, 
    \end{align*}
    with $a_{ij}',b_{ij}' \in \mathbb{N}$, $b_{10}',b_{20}' >0$, and $a_{30}' + a_{31}'a_{32}' > 0$. If the resulting collection has exactly $d_0$ cubes, then it is an $L$-shape, $b_{11}' = b_{11} = a_{11}$, $b_{22}'=b_{22}=a_{22}$, and $a_{33}' = a_{33}$.
\end{prop}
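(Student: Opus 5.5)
The plan has two parts. First I show that the resulting collection, call it $T$, is an $L$-shape. Then I read off the three equalities by looking at the cubes $[[a_{11},0,0]]$, $[[0,a_{22},0]]$, $[[0,0,a_{33}]]$.

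\emph{Step 1: $T$ is an $L$-shape.} Every deleted region is one of the following.
\begin{itemize}
\item A region associated to a point $P$ with $xd_1+yd_2+zd_3$ a positive multiple of $d_0$. This covers the three families of signed points and the points $(a_{01},a_{02},a_{03})$.
\item The region $\{x>b_{11}'\}$. It equals, on $\mathbb{N}^3$, the region associated to $(b_{11}',-b_{12}',-b_{13}')$, and $b_{10}'>0$.
\item The region $\{y>b_{22}'\}$, handled in the same way as $\{x>b_{11}'\}$.
\item The region $\{z>a_{33}'\}$. If $a_{30}'>0$ it is handled in the same way.
\end{itemize}
By Lemma \ref{1remlemma}, none of these removes any cube whose label lies in $\text{Ap}(S,d_0)$. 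The remaining case is $\{z>a_{33}'\}$ with $a_{30}'=0$, and hence $a_{31}',a_{32}'>0$. Here I argue as in Lemma \ref{lemmaminrelation1}. Fix $w\in\text{Ap}(S,d_0)$. All of its representations $w=\lambda_1d_1+\lambda_2d_2+\lambda_3d_3$ survive every other deletion. Choose one with $\lambda_3$ minimal. If $\lambda_3\ge a_{33}'$, substituting $a_{33}'d_3=a_{31}'d_1+a_{32}'d_2$ gives a representation with smaller $\lambda_3$, a contradiction. So $w$ labels a cube of $T$.

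Hence all $d_0$ Apéry elements label cubes of $T$. Since $T$ has exactly $d_0$ cubes, each cube carries a distinct label, and every such label is an Apéry element. Downward closedness is automatic, because every deleted region is an up-set of the form $\{x>a,y>b,z>c\}$. So $T$ is an $L$-shape.

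\emph{Step 2: the equalities.} Minimality gives $a_{11}\le b_{11}\le b_{11}'$, and similarly for the index $2$. The relation for $d_3$ is a valid relation with $a_{33}'>0$, so $a_{33}\le a_{33}'$. It therefore suffices to prove $b_{11}'\le a_{11}$, $b_{22}'\le a_{22}$ and $a_{33}'\le a_{33}$.

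Suppose, say, $a_{11}<b_{11}'$. I check which deleted regions can contain the cube $[[a_{11},0,0]]$.
\begin{itemize}
\item A point with a positive $y$- or $z$-coordinate cannot, and all three signed families have one. So neither can $\{y>b_{22}'\}$ or $\{z>a_{33}'\}$.
\item The region $\{x>b_{11}'\}$ cannot, since $a_{11}<b_{11}'$.
\item The only remaining candidate is a point $(a_{01},0,0)$ with $a_{00}d_0=a_{01}d_1$ and $a_{01}\le a_{11}$.
\end{itemize}

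Assume the cube survives. Its label $a_{11}d_1=a_{10}d_0+a_{12}d_2+a_{13}d_3$ must be an Apéry element, so $a_{10}=0$. Then the label is also carried by $[[0,a_{12},a_{13}]]$, whose coordinates are not all zero. If that cube is also in $T$, uniqueness of labels is contradicted. If it is not in $T$, downward closedness forces some deleted region to contain it. I then compare the two representations: the difference has the form used in the case analysis of Theorem \ref{theoremprocedure}, and I would push it back to a contradiction with the minimality of $a_{11}$ or with the survival of $[[a_{11},0,0]]$.

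The same argument applies to $[[0,a_{22},0]]$ and to $[[0,0,a_{33}]]$. In the last case the $z$-axis cube can only be cut by $\{z>a_{33}'\}$ or by a point $(0,0,a_{03})$.

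\emph{Main obstacle.} I expect the hardest part to be the degenerate situation where $[[a_{11},0,0]]$ is cut by a point $(a_{01},0,0)$, that is, $a_{00}d_0=a_{01}d_1$. Minimality then forces $a_{01}=a_{11}=b_{11}$. The region $\{x>b_{11}'\}$ becomes redundant, and the count of $d_0$ cubes no longer obviously pins $b_{11}'$ down. There I would try to use the hypotheses $b_{10}'>0$ and $a_{30}'+a_{31}'a_{32}'>0$, together with the embedding-dimension-four assumption, to exclude the degenerate relation. If that fails, the statement would have to be read with this case set aside.
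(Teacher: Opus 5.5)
Your Step 1 is fine and follows the paper's route to the $L$-shape: Lemma \ref{1remlemma}, plus the argument of Lemma \ref{lemmaminrelation1} run with the non-minimal relation for $a_{33}'$.

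\textbf{The gap in Step 2.} It sits at the one point that carries the whole argument. When $[[0,a_{12},a_{13}]]\notin T$ you only say you ``would push it back to a contradiction'', and the tool you name does not apply. The case analysis in Theorem \ref{theoremprocedure} handles two representations whose values differ by a \emph{positive} multiple of $d_0$; here both cubes carry the same value $a_{11}d_1$.

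The missing observation is one you already proved in Step 1. By Lemma \ref{1remlemma}, the only deletion that can remove a cube carrying an Ap\'ery label is $\{z>a_{33}'\}$ in the case $a_{30}'=0$. Hence $[[0,a_{12},a_{13}]]\notin T$ forces $a_{30}'=0$ and $a_{13}\ge a_{33}'$. Substituting $a_{33}'d_3=a_{31}'d_1+a_{32}'d_2$ gives
\[
(a_{11}-a_{31}')d_1=(a_{12}+a_{32}')d_2+(a_{13}-a_{33}')d_3 .
\]
The right-hand side is positive because $a_{32}'>0$. So $0<a_{11}-a_{31}'<a_{11}$, contradicting the minimality of $a_{11}$. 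The $d_2$ case is symmetric, using $a_{31}'>0$. In the $d_3$ case the cube $[[a_{31},a_{32},0]]$ cannot be removed at all. This is where the hypothesis $a_{30}'+a_{31}'a_{32}'>0$ is genuinely needed.

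\textbf{Comparison with the paper, once filled in.} The paper first gets $a_{33}'=a_{33}$ by a counting argument with Lemma \ref{lemmaminrelation1}. It then gets $b_{11}'=b_{11}$ and $b_{22}'=b_{22}$ by comparing the extents of $T\subseteq R$, and deduces $T=R\cap\{z\le a_{33}\}$. Only then does it prove $a_{11}=b_{11}$, by substituting the minimal relation of $d_3$ and pushing down along $a_{33}d_3=a_{32}d_2$. Your axis-cube argument treats the three coordinates uniformly and needs neither that characterization nor the push-down.

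\textbf{The degenerate case.} Your ``main obstacle'' is real, and it cannot be removed using $b_{10}'>0$ and $a_{30}'+a_{31}'a_{32}'>0$: in that case the statement is false as written. Suppose $a_{00}d_0=a_{11}d_1$ and $(a_{11},0,0)$ is among the deleted points. Replacing the relation defining $b_{11}'$ by twice itself leaves the collection unchanged, since both regions lie inside $\{x>a_{11}\}$. So it still has $d_0$ cubes, while $b_{11}'\neq b_{11}$.

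This happens in the paper's own families, e.g.\ $(3k+1)\binom{6k+1}{2}=3k\binom{6k+2}{2}$ for triangular numbers with $n=6k$. A deleted point $(0,0,a_{33})$ with $a_{00}d_0=a_{33}d_3$ breaks $a_{33}'=a_{33}$ in the same way. The paper's proof passes over this silently when it asserts that $b_{11}'$, $b_{22}'$ are the boundary values of $T$, and that $T\cap\{z\le a_{33}\}$ is strictly smaller than $T$.

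So rather than trying to exclude this case, add the hypothesis that the bounds $b_{11}'$, $b_{22}'$, $a_{33}'$ are attained by the resulting collection. This fails only when a deleted point $(a_{01},a_{02},a_{03})$ lies on a coordinate axis below the corresponding bound. Under that hypothesis your completed argument proves every claimed equality. Even without it, your argument still yields $a_{11}=b_{11}$ and $a_{22}=b_{22}$.
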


\begin{proof}
    Lemmas \ref{1remlemma} and \ref{lemmaminrelation1} imply that the resulting collection is an $L$-shape. Denote it by $T$, and let $R$ be as in Theorem \ref{theoremprocedure}.
    Note that $R$ is contained in $T \cup \{z > a_{33}'\}$. Therefore, if 
    $a_{33}'>a_{33}$, Lemma \ref{lemmaminrelation1} tells us that every element of the Ap{\'e}ry set labels a cube in 
    $$
    R \cap \{z \leq a_{33}\} \subseteq  \left(T \cup \{z > a_{33}'\} \right)\cap \{z \leq a_{33}\} = T \cap \{z \leq a_{33}\}.
    $$  
    Now $T \cap \{z \leq a_{33}\}$ is strictly smaller than $T$, which contradicts the fact that $T$ is an $L$-shape, hence $a_{33}'=a_{33}$. For convenience, set $(a_{30},a_{31},a_{32}) = (a_{30}',a_{31}',a_{32}')$.

    Recall that $R$ is the set of all cubes labeled with an element of the Ap{\'e}ry set, according to Theorem \ref{theoremprocedure}, and that $b_{11}$ and $b_{22}$ are the boundary values of $R$ in the $x$- and $y$- directions, respectively. On the other hand, $b_{11}'$ and $b_{22}'$ are the boundary values of $T$ in the $x$- and $y$- directions, respectively, and we have $b_{11}' \geq b_{11}$ and $b_{22}' \geq b_{22}$. This yields $b_{11}' = b_{11}$ and $b_{22}' = b_{22}$.
    
    Moreover, we get $T = R \cap \{z \leq a_{33}\}$ and 
    the only claims left to prove are $a_{11} = b_{11}$ and $a_{22} = b_{22}$.
    
    Suppose that $a_{11} < b_{11}$. This gives $a_{11}d_1 = a_{12}d_2 + a_{13}d_3$ (for some $a_{12},a_{13} \in \mathbb{N}$), which is an element of the Ap{\'e}ry set, since it is a label in $T$. 
    If $a_{13} < a_{33}$, then $[[0,a_{12},a_{13}]] \in T$ (as $T =R \cap \{z \leq a_{33}\} $), which contradicts the fact that $T$ is an $L$-shape, as $[[a_{11},0,0]] \in T$ also. Hence $a_{13} \geq a_{33}$.

    Now, we can write
    \begin{equation}\label{propminimalrelations-eq1}
        a_{11}d_1 =  a_{12}d_2 + (a_{13}-a_{33})d_3 + a_{30}d_0 + a_{31}d_1 + a_{32}d_2.
    \end{equation}
    If $a_{31}d_1 > 0$, subtracting $d_1$ from both sides of \eqref{propminimalrelations-eq1} gives a contradiction with the minimality of $a_{11}$, unless $a_{12} =  (a_{13}-a_{33})=a_{30}=a_{32}=0$. However, this contradicts $a_{30} + a_{31}a_{32}=a_{30}' + a_{31}'a_{32}' > 0$. If $a_{30}d_0 > 0$, we get a contradiction, as we assumed $a_{11} < b_{11}$. 
   
   Hence, $a_{31}=a_{30}=0$ and $a_{33}d_3 = a_{32}d_2$. We have 
   $$a_{11}d_1 =  (a_{12} + \lambda a_{32})d_2 + (a_{13}-\lambda a_{33})d_3,$$ 
   where $\lambda \in \mathbb{Z}_{>0}$ is such that $ a_{33} > (a_{13}-\lambda a_{33}) \geq 0$ (recall $a_{13} \geq a_{33}$). This gives $[[0,a_{12} + \lambda a_{32},a_{13}-\lambda a_{33}]] \in T$, as $T =R \cap \{z \leq a_{33}\} $ and no cube labeled with an element of the Ap{\'e}ry set is outside of $R$ by Theorem \ref{theoremprocedure}.
   This contradicts the fact that $T$ is an $L$-shape, hence $a_{11} = b_{11} $. 

   The claim $a_{22}=b_{22}$ follows analogously.
\end{proof}
\begin{remark}\label{remark_propminimalrelations}
    Note that the semigroup from Proposition \ref{propminimalrelations} has $a_{11} = b_{11}$ and $a_{22}=b_{22}$, so it satisfies the assumptions of Proposition \ref{proptrick}. Moreover, if a numerical semigroup has $a_{11}=b_{11}$ and $a_{22}=b_{22}$, then there exists a representation $a_{33}d_3 = a_{30}d_0 + a_{31}d_1 + a_{32}d_2$ with $a_{30}, a_{31}, a_{32} \in \mathbb{N}$ and $a_{30} + a_{31}a_{32} > 0$. Otherwise, without loss of generality, suppose $a_{30} = a_{31} = 0$, which implies $a_{33}d_3 = a_{32}d_2$. Hence $a_{32} \geq a_{22} = b_{22}$ and $a_{33}d_3 = b_{20}d_0 + b_{21}d_1 + (a_{32}-b_{22})d_2$ is the desired representation (we have $b_{23} = 0$, as otherwise we would get a contradiction with the minimality of $a_{33}$).
    Consequently, Proposition \ref{propminimalrelations} applies precisely to all numerical semigroups that satisfy the assumptions of Proposition \ref{proptrick}.
\end{remark}

We end this section with a result connected to \cite{AGUILOGOST2015}, where the authors constructed a family of numerical semigroups with embedding dimension four that admits arbitrarily many related $L$-shapes. The following proposition characterizes when an Ap{\'e}ry set of a numerical semigroup with embedding dimension four admits exactly one $L$-shape. For the study of such semigroups in arbitrary embedding dimension, see \cite{Rosales2000}.

For the next proof, let the region associated to a \emph{cube} be the region associated to the vertex of the \emph{cube} that is closest to the origin.

\begin{prop}\label{propL-shapes}
    The Ap{\'e}ry set with respect to $d_0$ has a unique $L$-shape if and only if $a_{ii}=b_{ii}$ for $i=1,2,3$.
\end{prop}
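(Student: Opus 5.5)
We prove the two directions separately, working throughout with the figure $R$ of Theorem \ref{theoremprocedure}, which is the set of \emph{all} cubes labeled with Apéry elements. The key observation is that any $L$-shape is a subset of $R$. Indeed, its labels lie in the Apéry set, and every cube whose label lies in the Apéry set belongs to $R$. Moreover, an $L$-shape contains exactly $d_0$ cubes.

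\emph{($\Leftarrow$)} Assume $a_{ii}=b_{ii}$ for $i=1,2,3$. By Lemma \ref{lemmaminrelation2}, the collection $T=R\cap\{x\le a_{11},y\le a_{22},z\le a_{33}\}$ has pairwise distinct labels. Since $b_{ii}=a_{ii}$, the regions $\{x>b_{11}\}$, $\{y>b_{22}\}$, $\{z>b_{33}\}$ are already missing from $R$, so $T=R$. Hence every Apéry element labels exactly one cube of $R$. Now let $L$ be any $L$-shape. Since $L\subseteq R$ and $\#L=d_0=\#R$, we get $L=R$. This proves uniqueness.

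\emph{($\Rightarrow$)} By contraposition, suppose without loss of generality that $a_{11}<b_{11}$, so that $a_{10}=0$ and $a_{11}d_1=a_{12}d_2+a_{13}d_3$. The plan is to exhibit two distinct $L$-shapes.

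\begin{itemize}
\item First, $R$ is not itself an $L$-shape. Write $m=a_{11}d_1$; the cube $[[a_{11},0,0]]$ lies in $R$ because $a_{11}<b_{11}$, using the boundary of $R$ given by the points \eqref{positivepoints}. If $m$ is Apéry, then $m$ also labels $[[0,a_{12},a_{13}]]$, which is in $R$, so $m$ labels two cubes of $R$. If $m$ is not Apéry, then $[[a_{11},0,0]]\notin R$ by Theorem \ref{theoremprocedure}, and we would have to argue differently. To handle this, pick a non-Apéry-free duplicate: take the label $a_{11}d_1$ itself. If $a_{11}d_1-d_0\in S$, a relation with positive $d_0$ coefficient arises. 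Rewriting it would give a $d_0$-positive relation for $d_1$ with coefficient $\le a_{11}<b_{11}$, after moving $d_1$ terms; we must check this step. So $a_{11}d_1$ is Apéry and labels two cubes of $R$.
\item Second, we build two $L$-shapes. One is $L_1=R\cap\{x\le a_{11}\}$ after the further trimming of Propositions \ref{proptrick}/\ref{propnottrick}. We need a general existence statement for $L$-shapes: choosing, for each Apéry element, its lexicographically largest (respectively smallest) representation in a fixed coordinate order yields an $L$-shape. This holds because lexicographic minimality of a factorization is inherited by coordinatewise smaller factorizations, which gives the downward-closedness condition, i.e.\ the third bullet of the definition. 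The order-ideal property of Apéry elements gives the first two bullets.
\item Then we take two orders: one preferring small $x$, the other preferring large $x$. For the label $a_{11}d_1$, these select $[[0,a_{12},a_{13}]]$-type and $[[a_{11},0,0]]$-type cubes respectively, so the two $L$-shapes differ.
\end{itemize}

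The main obstacle is the general existence claim: that lexicographic choices give a downward-closed set. The point to verify is that if $(\mu_1,\mu_2,\mu_3)$ is the lex-minimal factorization of $s$ and $\nu\le\mu$ coordinatewise, then $\nu$ is the lex-minimal factorization of its own value. If some $\nu'$ with the same value were lex-smaller, then $\mu-\nu+\nu'$ would be a lex-smaller factorization of $s$. This works because the lex order is compatible with translation, so the argument is short. A second point to confirm is the Apéry claim for $a_{11}d_1$, i.e.\ that $a_{11}d_1-d_0\notin\langle d_0,\dots,d_3\rangle$. This uses the minimality of $b_{11}$ together with the minimality of $a_{11}$ to rule out any $d_1$-coefficient in such a representation.
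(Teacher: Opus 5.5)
Your proof is correct. For the direction ``$a_{ii}<b_{ii}$ for some $i$ implies non-uniqueness'' it takes a different and cleaner route than the paper.

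\textbf{Uniqueness direction.} This matches the paper's argument (Lemma \ref{lemmaminrelation2} together with $L\subseteq R$). You should add one line saying that $R$ itself is an $L$-shape: it is downward closed, being what remains after deleting regions associated to points. Otherwise you have shown there is at most one $L$-shape, but not that one exists.

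\textbf{Non-uniqueness direction.} The paper runs a greedy deletion procedure on $R$. It processes repeated labels in increasing order, keeps the cube with maximal $x$ (ties broken by maximal $y$), and deletes the regions associated to the other cubes with that label. It then needs a descent argument to show no Ap\'ery element is lost: it produces cubes with the same label and ever larger coordinates, contradicting the boundedness of $R$. A second run maximizing $z$ (then $y$) gives a different $L$-shape, distinguished at the smallest repeated label after a ``without loss of generality'' on the sign pattern.

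You instead define the $L$-shape outright as the set of lexicographically extremal factorizations of the Ap\'ery elements. Downward closedness is then immediate: the lexicographic order is translation invariant, and a cube below a cube labeled by an Ap\'ery element is again labeled by an Ap\'ery element. Taking max versus min in the same coordinate $x$ makes the two $L$-shapes visibly differ at the explicit label $a_{11}d_1$. In fact the paper's procedure ends up keeping exactly the lexicographically largest cube (in $x$, then $y$) of each label. So your argument is in effect a direct proof that its output is an $L$-shape, with no descent needed.

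You also prove a step the paper only asserts, namely that $a_{11}d_1$ is an Ap\'ery element. From $a_{11}d_1-d_0=c_0d_0+c_1d_1+c_2d_2+c_3d_3$ one gets $(a_{11}-c_1)d_1=(c_0+1)d_0+c_2d_2+c_3d_3$ with $0<a_{11}-c_1\le a_{11}<b_{11}$. This contradicts the minimality of $b_{11}$; the minimality of $a_{11}$ is not needed. What the paper's version buys in exchange is that it stays inside its region-deletion framework, exhibiting the $L$-shapes as carvings of $R$.

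\textbf{Clean-up.}
\begin{itemize}
\item Remove your first justification that $[[a_{11},0,0]]\in R$ ``because $a_{11}<b_{11}$''. $R$ is also carved by the other regions of Theorem \ref{theoremprocedure}, so membership follows only from $a_{11}d_1$ being Ap\'ery, which you then prove.
\item Drop the aside about obtaining $L_1$ via Propositions \ref{proptrick} and \ref{propnottrick}. Their hypotheses need not hold here, and your lexicographic construction makes them unnecessary.
\end{itemize}
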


\begin{proof}
    If $a_{ii}=b_{ii}$ for $i=1,2,3$, then Lemma \ref{lemmaminrelation2} implies that the figure $R$ is an $L$-shape. Moreover, $R$ is unique, and it is the set of all cubes from the initial collection of cubes that are labeled with an element of the Ap{\'e}ry set by Theorem \ref{theoremprocedure}. Hence the Ap{\'e}ry set admits exactly one $L$-shape.

    Suppose that $a_{ii}<b_{ii}$ for some $i \in \{1,2,3\}$. Then $R$ is not an $L$-shape, and some label appears more than once in $R$ --- as the label $a_{ii}d_i$ does. Let $\ell$ be the the smallest such label. Write 
    $$\ell =\lambda_1d_1 + \lambda_2d_2 + \lambda_3d_3 = \lambda_1'd_1 + \lambda_2'd_2 + \lambda_3'd_3,$$
    with $\lambda_i,\lambda_i' \in \mathbb{N}$, and $[[\lambda_1,\lambda_2,\lambda_3]]$, $[[\lambda_1',\lambda_2',\lambda_3']]$ being two distinct cubes in $R$. Without loss of generality, assume that $\lambda_1 > \lambda_1'$ and $\lambda_3 < \lambda_3'$. 

    Among all cubes labeled with $\ell$, choose the one with the largest $x$-coordinate; if several have the same maximal $x$-coordinate, choose among them the one with the largest $y$-coordinate (and only one such cube can exist, since they all have the same label). Delete from $R$ the regions associated to all other cubes labeled with $\ell$, leaving only the chosen one. Then proceed to the next smallest label that appears more than once, and repeat the process. We claim that this procedure eventually produces an $L$-shape.

    Suppose, to the contrary, that the procedure does not produce an $L$-shape. Then, at some stage, every cube labeled with a particular element is deleted. Let $a$ be the first element of the Ap{\'e}ry set whose label disappears entirely, and let $b$ be the label of the cubes whose associated regions deleted the last cube labeled with $a$. 
    
    Write $a =b+ \delta_1d_1 +\delta_2d_2+\delta_3d_3  $ with $\delta_i \in \mathbb{N}$, and let $b = \mu_1d_1 + \mu_2d_2 + \mu_3d_3$, where $[[\mu_1,\mu_2,\mu_3]]$ is the cube labeled with $b$ that survives our process --- such a cube exists because in our process when we preserve one cube with currently the smallest repeated label, this cube cannot be deleted by some region later because of the minimality of the label. We have 
    $$a = ( \delta_1+\mu_1)d_1 + ( \delta_2+\mu_2)d_2 + ( \delta_3+\mu_3)d_3,$$
    so the cube $[[\delta_1+\mu_1,\delta_2+\mu_2,\delta_3+\mu_3]]$ is labeled with $a$. Since all cubes labeled with $a$ were deleted, this cube also has been deleted by the region associated to some cube $[[\nu_1,\nu_2,\nu_3]]$. 
    
    However, there exists a cube $[[\nu_1',\nu_2',\nu_3']]$ with the same label as $[[\nu_1,\nu_2,\nu_3]]$ that was not deleted. Consequently, $a$ also labels the cube 
    $$[[\nu_1' + \delta_1+\mu_1 -\nu_1,\nu_2' + \delta_2+\mu_2 -\nu_2,\nu_3' + \delta_3+\mu_3 -\nu_3]].$$ 
    By construction, either its first coordinate exceeds $\delta_1+\mu_1$, or its first coordinate is equal to $\delta_1+\mu_1$ and its second coordinate is larger than $\delta_2+\mu_2$. Again, because $a$ is not a label in the final collection, this next cube we just constructed also has been deleted by some region. Repeating this argument produces cubes labeled by $a$ with arbitrarily large $x$- or $y$-coordinates, contradicting Theorem \ref{theoremprocedure} (as $R \subseteq \{x \leq a_{11}, y \leq a_{22}, z \leq a_{33}\}$). Thus, the procedure must produce an $L$-shape.

    Performing the same procedure, but choosing, at each step, the cube with maximal $z$-coordinate (and breaking ties by maximal $y$-coordinate), yields a different $L$-shape. It is indeed different, since by the assumption $\lambda_1 > \lambda_1'$ and $\lambda_3 < \lambda_3'$, the first cube selected in this variant is not the same as in the previous procedure.
\end{proof}

For an insight into the geometry of the Ap{\'e}ry set in arbitrary embedding dimension, we refer the reader to \cite{Aicardi2009} and \cite{Kannan1992}.

\section{Catenary degree}\label{bettielements}

In this section, we will be interested in describing the set of Betti elements, minimal presentations, and the catenary degree using the Ap{\'e}ry set, building on the results from Section \ref{geometricprocedure}. We start with some preliminary definitions and then proceed to the main results. To apply those results in Sections \ref{findingtheFnumber} and \ref{triangularnumbers}, Proposition \ref{propminimalrelations} will be essential. We do not have a version of Proposition \ref{propminimalrelations} for numerical semigroups that do not satisfy the assumption of Proposition \ref{proptrick} (see Remark \ref{remark_propminimalrelations}). As a consequence, in this section we will consider only numerical semigroups fulfilling those assumptions; see Problem \ref{problemcatenary}.

Results closely related to those of this section were obtained by Bresinsky in \cite{Bresinsky1988}, and in fact, his approach applies to all numerical semigroups of embedding dimension four. Our method, however, provides a different perspective; it explicitly relates the Betti elements and minimal presentations to the Apéry set, specifically to an $L$-shape --- a connection that is absent from \cite{Bresinsky1988}. Consequently, Bresinsky’s framework does not yield the results that we establish in Sections \ref{findingtheFnumber} and \ref{triangularnumbers}, which are based on the viewpoint developed here.

\subsection{Preliminaries}

Let $S$ be a numerical semigroup minimally generated by $\{d_0, d_1, \dots, d_k\} $. The \emph{factorization morphism} of $S$ is 
$$ \varphi : \mathbb{N}^{k+1} \to S, \quad \varphi(z_0, \dots, z_k) = z_0d_0 + \dots + z_kd_k.$$
The \emph{set of factorizations} of an element $s \in S$ is
$$ \varphi^{-1}(s) = \{(z_0,\dots,z_k)\in \mathbb{N}^{k+1} \mid z_0d_0 + \dots + z_kd_k = s\}.$$
If $z =(z_0,\dots,z_k) \in \varphi^{-1}(s)$, then the \emph{length} of $z$ is $|z| = z_0 + \dots + z_k$. The \emph{support} of $z$ is defined by
$$ \text{supp}(z) = \{i \in \{0,\dots,k\} \mid z_i > 0\}.$$
For $z = (z_0,\dots,z_k), z' = (z_0',\dots,z_k') \in \mathbb{N}^{k+1}$ we set
$$ \gcd(z,z') = ( \min\{z_0,z_0'\}, \dots, \min\{z_k,z_k'\}).$$
The \emph{distance} between $z$ and $z'$ is defined by
$$ \text{d}(z,z') = \max\{ |z - \gcd(z,z')|,|z'-\gcd(z,z')|\}.$$

Given $s \in S$ and $z,z' \in \varphi^{-1}(s)$, an \emph{$N$-chain} of factorizations from $z$ to $z'$ is a sequence $z_1,\dots,z_p \in \varphi^{-1}(s) $ such that $z_1 = z$, $z_p = z'$, and d$(z_i,z_{i+1}) \leq N$ for all $i \in \{1, \dots, p-1\}$. The \emph{catenary degree} of $s$, $\textup{c}(s)$, is the minimal $N \in \mathbb{N} \cup \{\infty\}$ such that for any two factorizations $z,z' \in \varphi^{-1}(s)$, there is an $N$-chain from $z$ to $z'$. The catenary degree of $S$ is defined by
$$ \textup{c}(S) = \sup \{ \textup{c}(s) \mid s\in S \}.$$

Two elements $z$ and $z'$ of $\mathbb{N}^{k+1}$ are $\mathcal{R}$-related if there exists a finite sequence $z = z_1, \dots, z_p = z'$ in $\mathbb{N}^{k+1}$ such that $\text{supp}(z_i) \cap \text{supp}(z_{i+1})$ is nonempty for all $i \in \{1, \dots, p-1\}$. We write $z \, \mathcal{R} \, z'$ in this case. We will refer to equivalence classes of $\varphi^{-1}(s)$ under the relation $\mathcal{R}$ as the $\mathcal{R}$-classes. 
We say that $s \in S$ is a \emph{Betti element} if there exist two elements of $\varphi^{-1}(s)$ that are not $\mathcal{R}$-related (i.e. it has at least two different $\mathcal{R}$-classes). The set of all Betti elements of $S$ is denoted $\text{Betti}(S)$.

\begin{thm}\label{theoremcatenarybetti}
    \textup{\cite[Theorem 15]{AssiDAnnaGarciaSanchez2020}} Let $S$ be a numerical semigroup. Then $$ \textup{c}(S) = \max \{\textup{c}(b) \mid b \in \textup{Betti}(S) \}.$$
\end{thm}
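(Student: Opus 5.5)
Since this theorem is quoted from \cite[Theorem 15]{AssiDAnnaGarciaSanchez2020}, the plan is to recall the standard argument. It combines two facts: the catenary degree is at least $\textup{c}(b)$ for every Betti element $b$, and any chain of factorizations can be refined using only ``moves'' coming from Betti elements. The inequality $\max\{\textup{c}(b) \mid b\in\textup{Betti}(S)\}\le \textup{c}(S)$ is immediate from the definition of $\textup{c}(S)$ as a supremum. The content lies in the reverse inequality $\textup{c}(s)\le N:=\max_b \textup{c}(b)$ for every $s\in S$. The maximum exists because $\textup{Betti}(S)$ is finite: it is the set of degrees of a minimal presentation, and such a presentation is finite since congruences on $\mathbb{N}^{k+1}$ are finitely generated (Rédei).

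I will prove $\textup{c}(s)\le N$ by strong induction on $s$ with respect to the order $s'\le_S s \iff s-s'\in S$. Because $S\subseteq\mathbb{N}$ this order is well founded. Fix $z,z'\in\varphi^{-1}(s)$. There are two cases.

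\emph{Case 1: $\gcd(z,z')\neq 0$.} Say $z_i,z'_i>0$. Then $z-e_i$ and $z'-e_i$ are factorizations of $s-d_i<_S s$. By induction they are joined by an $N$-chain, and adding $e_i$ to every term gives an $N$-chain from $z$ to $z'$, since adding a common vector does not change distances.

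\emph{Case 2: $z$ and $z'$ have disjoint supports.} First suppose $z\,\mathcal{R}\,z'$ within $\varphi^{-1}(s)$. Then there is a sequence $z=w_1,\dots,w_p=z'$ in $\varphi^{-1}(s)$ with consecutive supports intersecting. By Case 1, consecutive terms are joined by $N$-chains, and concatenating these gives an $N$-chain from $z$ to $z'$. If $z$ and $z'$ are not $\mathcal{R}$-related, then $s$ is a Betti element, so $\textup{c}(s)\le N$ by the definition of $N$. More generally, whenever $s$ is itself a Betti element the bound holds directly.

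Chaining the two cases together handles every pair $z,z'$, which gives $\textup{c}(s)\le N$. The main subtlety is making sure the induction is well founded and that $N$ is finite. The first point is handled by using the divisibility order, whose base case is $s=0$ with the single factorization $0$. The second point is where finiteness of $\textup{Betti}(S)$ enters, together with $\textup{c}(b)<\infty$, which holds because each $\varphi^{-1}(b)$ is finite.
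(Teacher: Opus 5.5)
Your proof is correct. The paper gives no argument of its own and only quotes \cite[Theorem 15]{AssiDAnnaGarciaSanchez2020}, and your argument is essentially the standard proof of that result: induction on $s$, peeling off a common atom when two factorizations share support, linking $\mathcal{R}$-related factorizations through such pairs, and using $\textup{c}(s)\le N$ directly only when $s$ is a Betti element. You were also right to require the $\mathcal{R}$-chains to lie inside $\varphi^{-1}(s)$: read literally, the paper's definition (chains in $\mathbb{N}^{k+1}$) would make any two nonzero vectors $\mathcal{R}$-related, and your Case 2 needs the intermediate $w_j$ to be factorizations of $s$.
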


We define the \emph{kernel congruence} of $\varphi$ as $\sigma = \{ (a,b) \in \mathbb{N}^{k+1} \times \mathbb{N}^{k+1} \mid \varphi(a) = \varphi(b) \}$. It is well known that $S$ is isomorphic to the monoid $\mathbb{N}^{k+1} / \sigma$. 
A \emph{presentation} for $S$ is a subset $\rho$ of $\sigma$ such that $\sigma$ is the least congruence (with respect to set inclusion) containing $\rho$. A \emph{minimal presentation} is a presentation that is minimal with respect to set inclusion.
It turns out that the set of Betti elements is also helpful in finding minimal presentations.

We will describe a process to find all minimal presentations of $S$. Let $\mathcal{R}_1, \dots, \mathcal{R}_l$ be the different $\mathcal{R}$-classes of $\varphi^{-1}(b)$ for some $b \in \text{Betti}(S)$. We choose $v_i \in \mathcal{R}_i$ for each $i \in \{1,\dots,l\}$. Consider $\rho_b$ to be any set of $l-1$ pairs of elements in $\{v_1, \dots, v_l\}$ such that the graph with vertices $\{v_1, \dots, v_l\}$, having an edge between $v_i$ and $v_j$ if and only if $(v_i,v_j)$ or $(v_j,v_i)$ is in $\rho_b$, is connected. 
\begin{thm}\label{theorem_minimalpresentation}
    \textup{\cite{Rosales1996MinimalRelation}} The set $\rho = \bigcup_{b \in \text{Betti}(S)} \rho_b$ is a minimal presentation and every minimal presentation is of this form.
\end{thm}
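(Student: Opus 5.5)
The plan is to work with the standard description of the congruence generated by a set of pairs. For $\rho \subseteq \sigma$, the congruence generated by $\rho$ consists of the pairs $(z,z')$ that can be joined by a finite chain $z=w_1,\dots,w_p=z'$. Each step has the form $w_t = a + c$ and $w_{t+1} = b + c$ for some $c \in \mathbb{N}^{k+1}$ and some $(a,b)$ with $(a,b)$ or $(b,a)$ in $\rho$. Every such step preserves $\varphi$. So $\rho$ is a presentation exactly when any two factorizations of the same element $s$ are joined by such a chain.

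\textbf{Step 1: $\rho=\bigcup_b \rho_b$ is a presentation.} I argue by induction on $s \in S$, ordered as integers, that any two $z,z' \in \varphi^{-1}(s)$ are joined by a chain.

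First take consecutive elements $z_t,z_{t+1}$ of an $\mathcal{R}$-sequence inside $\varphi^{-1}(s)$. They share some index $j$ in their supports, so $z_t - e_j$ and $z_{t+1}-e_j$ are factorizations of $s-d_j<s$. By induction these two are joined by a chain, and adding $e_j$ (enlarging $c$ by $e_j$ in each step) joins $z_t$ to $z_{t+1}$. Hence any two $\mathcal{R}$-related factorizations of $s$ are joined.

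Now suppose $z$ and $z'$ are not $\mathcal{R}$-related. Then $s$ is a Betti element, with $z \in \mathcal{R}_i$ and $z' \in \mathcal{R}_j$. I join $z$ to $v_i$ inside its class, then follow the path from $v_i$ to $v_j$ in the connected graph defined by $\rho_s$; each edge is a single step with $c=0$. Finally I join $v_j$ to $z'$ inside its class.

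\textbf{Step 2: every presentation $\rho'$ contains enough pairs in each Betti fibre.} Fix $b\in\text{Betti}(S)$ and consider a step $a+c \to b'+c$ inside $\varphi^{-1}(b)$. If $c\neq 0$, the two ends share $\text{supp}(c)$, so the step stays within one $\mathcal{R}$-class. Hence steps that change the class must have $c=0$, and must use a pair of $\rho'\cup\rho'^{-1}$ lying in $\varphi^{-1}(b)\times\varphi^{-1}(b)$. Because $\rho'$ generates $\sigma$, the graph on the $l$ classes whose edges are these pairs must be connected. So $\rho'$ contains at least $l-1$ such pairs.

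\textbf{Step 3: minimality and the converse.} For minimality of $\rho$: deleting a pair from $\rho_b$ disconnects the graph on the classes of $\varphi^{-1}(b)$. By Step 2 the smaller set is then no longer a presentation. Hence $\rho$ is minimal.

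For the converse, let $\rho'$ be a minimal presentation. For each Betti $b$, select from $\rho'$ the pairs in $\varphi^{-1}(b)^2$ joining distinct classes, and within them a spanning tree on the classes. The argument of Step 1 shows that the union of these trees is already a presentation. Indeed, it only used, for each Betti $b$, a connected graph of pairs on the classes, and the within-class moves come for free from induction. By minimality, $\rho'$ equals this union, and each tree has exactly $l-1$ edges.

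\textbf{Expected main obstacle.} The delicate point is matching this to the precise form ``pairs among fixed representatives $v_1,\dots,v_l$''. A tree in $\rho'$ might use different elements of the same class on different edges. I would first try to show that this is harmless: the statement should be read with representatives allowed to vary per edge, as in \cite{Rosales1996MinimalRelation}. The proof above only ever uses the graph induced on the $\mathcal{R}$-classes, so it goes through verbatim with that reading.
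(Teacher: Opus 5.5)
Your proof is correct and is essentially the standard argument behind the cited result of Rosales (the paper gives no proof of its own and only cites it): strong induction on $s$ shows that $\rho$ generates $\sigma$, and the observation that only steps with $c=0$ can cross $\mathcal{R}$-classes forces at least $l-1$ pairs in each Betti fibre. Your caveat is necessary, not just cautious. With one fixed representative per class the converse is false: in $\langle 45,63,70,105\rangle$ the element $315$ has $\mathcal{R}$-classes $\{(7,0,0,0)\}$, $\{(0,5,0,0)\}$ and $\{(0,0,3,1),(0,0,0,3)\}$, and the pairs $((7,0,0,0),(0,0,3,1))$, $((0,5,0,0),(0,0,0,3))$ can sit in a minimal presentation. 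So the statement must be read with endpoints varying within classes, which is exactly what your Step 3 establishes.
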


\subsection{Finding Betti elements}

Let $S = \langle d_0,d_1,d_2,d_3 \rangle$ be a numerical semigroup with embedding dimension four. Let $a_{ij}, b_{ij}$, for $i,j \in \{0,1,2,3\}$, be the coefficients of the minimal and $d_0$-positive minimal relations, as in Section \ref{geometricprocedure}.

Suppose that $a_{ii} = b_{ii}$ for at least two distinct indices $i \in \{1,2,3\}$. This means that we can choose the coefficients $a_{ij}$ such that
\begin{equation}\label{catenary_assumption}
    \text{at most one of} \ \{a_{10},a_{20},a_{30}\} \ \text{is equal to 0}.
\end{equation}
Let $T$ be the $L$-shape from Proposition \ref{proptrick}.
Let $V$ be a minimal set of points $(x,y,z) \in \mathbb{Z}^3$ such that $xd_1 + yd_2 +zd_3$ is a nonnegative multiple of $d_0$ and after removing from the initial collection of cubes the regions associated to those points, we obtain $T$. 
Define
$$ U = \{ \max\{ x,0\} d_1 + \max\{y,0\}d_2 + \max\{z,0\}d_3 \mid (x,y,z) \in V \}.$$

\begin{figure}[h]
    \caption{For $\langle 103,133,165,228\rangle$ (see Fig$.$ \ref{imgnorweg}), $U$ is the set of labels of the cubes in red}
    \vspace{6pt}
    \centering
    \includegraphics[width = 0.35\textwidth]{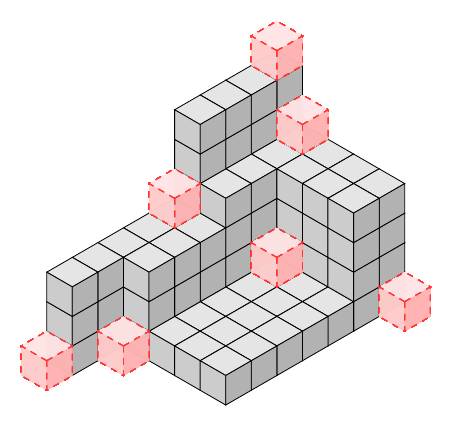}
    \label{imgbetti}
\end{figure}

\begin{lemma}\label{lemmabettifind}
   Assume that \eqref{catenary_assumption} holds. If $b \in \text{Betti}(S)$ labels a cube in $T$, then $b \in U$.
\end{lemma}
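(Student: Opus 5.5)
The plan is to compare the unique cube of $T$ labeled $b$ with a factorization of $b$ from another $\mathcal{R}$-class, which must lie outside $T$, and to show that the region of $V$ removing it is "tight" at $b$.

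\textbf{Setup.} Since $b$ labels a cube of $T$, $b\in\text{Ap}(S,d_0)$. Hence $b-d_0\notin S$, so no factorization of $b$ involves $d_0$. Every factorization of $b$ is therefore of the form $(0,\lambda_1,\lambda_2,\lambda_3)$, and it corresponds to the cube $[[\lambda_1,\lambda_2,\lambda_3]]$ labeled $b$. Because $T$ is an $L$-shape, exactly one of these cubes, say $[[\lambda_1,\lambda_2,\lambda_3]]$, lies in $T$; let $z$ be its factorization. Since $b$ is a Betti element, $\varphi^{-1}(b)$ has at least two $\mathcal{R}$-classes, so there is a class $\mathcal{C}$ not containing $z$. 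Every factorization in $\mathcal{C}$ gives a cube outside $T$. By the definition of $V$, such a cube lies in the region associated to some $P=(p_1,p_2,p_3)\in V$, with $p_1d_1+p_2d_2+p_3d_3=md_0$ and $m\ge 0$.

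\textbf{Key exchange step.} For $z'\in\mathcal{C}$ removed by $P$, set $q=(0,\max\{p_1,0\},\max\{p_2,0\},\max\{p_3,0\})$ and $q'=(m,\max\{-p_1,0\},\max\{-p_2,0\},\max\{-p_3,0\})$. Then $z'\ge q$ coordinatewise and $\varphi(q)=\varphi(q')=:c\in U$. The vector $z''=z'-q+q'$ is a factorization of $b$, so the absence of $d_0$ forces $m=0$. Consequently only points $P\in V$ giving relations among $d_1,d_2,d_3$ alone can remove the cubes of $\mathcal{C}$. Now write $w=z'-q$. If $w=0$, then $b=c\in U$ and we are done. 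If $w\neq 0$, then $z''$ and $z'$ share the support of $w$, so $z''\in\mathcal{C}$, and its cube is again outside $T$. We then repeat the step with $z''$.

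\textbf{Termination (the main obstacle).} Iterating the exchange could a priori cycle inside $\mathcal{C}$, and then we would never conclude. This is where \eqref{catenary_assumption} enters. Points of $V$ with $m=0$ correspond to relations among $d_1,d_2,d_3$ only. Relations of the form $\lambda_0d_0+\dots$ with $\lambda_0>0$, the points \eqref{positivepoints}, and the points $(a_{01},a_{02},a_{03})$ all have $m>0$. So the only candidates are the minimal relations $a_{ii}d_i=\sum_{j\ne i,0}a_{ij}d_j$ with $a_{i0}=0$, realizing the cuts $\{x>a_{11}\}$, $\{y>a_{22}\}$, $\{z>a_{33}\}$. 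By \eqref{catenary_assumption}, at most one such index $i$ exists, say $i=3$. Each exchange then replaces $a_{33}d_3$ by $a_{31}d_1+a_{32}d_2$, so the third coordinate strictly decreases along the process. The process must therefore stop. It cannot stop at a cube of $T$, since that cube is $z\notin\mathcal{C}$. Hence it stops at a factorization with $w=0$, which gives $b=c\in U$.

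The delicate point I expect to need care is checking that the set $V$ in force really consists only of points of the listed types. Any other $m=0$ point of a minimal $V$ would have to be dominated by these cuts and hence be superfluous. This should follow from the minimality of $V$ together with Theorem \ref{theoremprocedure} and Proposition \ref{proptrick}.
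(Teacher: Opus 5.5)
Your setup (no factorization of $b$ uses $d_0$, and exactly one cube labeled $b$ lies in $T$) and your exchange step are correct. The termination argument, however, has a genuine gap, exactly at the point you flagged.

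You treat $V$ as if it consisted of the points listed in Theorem \ref{theoremprocedure} plus the three cuts. But $V$ is an arbitrary minimal set of points with $xd_1+yd_2+zd_3\in\mathbb{N}d_0$ that carve out $T$. Your justification, that any other $m=0$ point is dominated by the cuts and hence superfluous, does not hold. Consider an $m=0$ point of $V$ whose corner lies in the box of cubes with coordinates below $(a_{11},a_{22},a_{33})$; its corner is then necessarily outside $R$. Such a point is admissible, its region is not contained in any cut, and minimality of $V$ does not exclude it. There is a second problem: \eqref{catenary_assumption} only says the coefficients can be \emph{chosen} so that at most one $a_{i0}$ vanishes. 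It does not stop $V$ from realizing two or three of the cuts by points with $m=0$. So, as written, exchanges by different points of $V$ can push coordinates in opposite directions, and nothing prevents your walk inside the finite set $\varphi^{-1}(b)$ from cycling.

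The missing idea is to control not $V$ itself but the points of $V$ whose region contains a cube labeled $b$. Suppose the cube of $z'$ lies in the region of $P$, with corner label $c=\varphi(q)$. Then $b=c+\varphi(w)$, so $c\notin\text{Ap}(S,d_0)$ would give $b-d_0\in S$. Hence $c\in\text{Ap}(S,d_0)$, and by Theorem \ref{theoremprocedure} the corner cube lies in $R$. It does not lie in $T$, which consists of the cubes of $R$ with coordinates below $(a_{11},a_{22},a_{33})$. So some corner coordinate is at least $a_{ii}$. By Lemma \ref{1remlemma} applied to $P_i$, this is only possible for an index with $a_{ii}<b_{ii}$, and by \eqref{catenary_assumption} there is at most one such index, say $i=3$. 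Thus every point you actually use has $p_3\ge a_{33}>0$, and each exchange lowers the third coordinate by $p_3$. The descent therefore ends with $w=0$, i.e.\ $b\in U$, and this works for any admissible $V$, minimal or not.

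With this repair your argument is a valid alternative to the paper's, which never looks at $V$. The paper compares $z$ with a factorization from another $\mathcal{R}$-class. It uses the box bounds of $T$ and Lemma \ref{1remlemma} to force disjoint supports of the form $b=\lambda_1d_1+\lambda_2d_2=\mu_3d_3$. It then uses the minimal relation of $d_3$ to show $\mu_3=a_{33}$, which pins down $b=a_{33}d_3$ exactly.
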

    
\begin{proof}
    Consider the factorization $z_0 =(0,\lambda_1,\lambda_2, \lambda_3) \in \varphi^{-1}(b)$, with $[[\lambda_1,\lambda_2,\lambda_3]] \in T$. Let $(\lambda_0',\lambda_1',\lambda_2', \lambda_3')$ be a different factorization of $b$. We get
    \begin{equation}\label{betticharacter_eq0}
        (\lambda_1-\lambda_1')d_1 +  (\lambda_2-\lambda_2')d_2 +  (\lambda_3-\lambda_3')d_3 = \lambda_0'd_0.
    \end{equation}
    
    If all coefficients on the left-hand side are nonnegative, we also clearly have $\lambda_0'>0$, as the two factorizations are distinct. This contradicts $[[\lambda_1,\lambda_2,\lambda_3]] \in T$ by Lemma \ref{1remlemma}. 

    Next, suppose that exactly one coefficient on the left-hand side of \eqref{betticharacter_eq0} is nonnegative, and, without loss of generality, assume that every coefficient below is nonnegative.
    \begin{equation}\label{betticharacter_eq00}
        (\lambda_1-\lambda_1')d_1   = \lambda_0'd_0 + (\lambda_2'-\lambda_2)d_2 +  (\lambda_3'-\lambda_3)d_3.
    \end{equation}
    This gives $a_{11} \leq (\lambda_1-\lambda_1') \leq \lambda_1$, which contradicts $[[\lambda_1,\lambda_2,\lambda_3]] \in T$.
    
    Hence, exactly two coefficients on the left-hand side of \eqref{betticharacter_eq0} are nonnegative, and, without loss of generality, assume that every coefficient below is nonnegative.
     \begin{equation}\label{betticharacter_eq01}
        (\lambda_1-\lambda_1')d_1 +  (\lambda_2-\lambda_2')d_2  = \lambda_0'd_0 +  (\lambda_3'-\lambda_3)d_3.
    \end{equation}
    If $\lambda_0'>0$, by Lemma \ref{1remlemma}, we get a contradiction with $[[\lambda_1,\lambda_2,\lambda_3]] \in T$, hence $\lambda_0'=0$. This means that for all $z \in \varphi^{-1}(b)$ we have $0 \notin \text{supp}(z)$, as $(\lambda_0',\lambda_1',\lambda_2', \lambda_3') \in \varphi^{-1}(b)$ was chosen arbitrarily. 

    Let $z=(0,\mu_1, \mu_2,  \mu_3) \in \varphi^{-1}(b)$ be a factorization that is not $\mathcal{R}$-related to $z_0$. Repeating the same consideration as for $(\lambda_0',\lambda_1',\lambda_2', \lambda_3')$, we can, without loss of generality, suppose that $\lambda_1 \geq \mu_1$, $\lambda_2 \geq \mu_2$, $\lambda_3 < \mu_3$. This implies $\mu_1, \mu_2, \lambda_3 =0$, as otherwise $z \, \mathcal{R} \, z_0$. Hence
    \begin{equation}\label{betticharacter_eq03}
       b= \lambda_1d_1 + \lambda_2d_2   =  \mu_3d_3,
    \end{equation}
    with $\lambda_1, \lambda_2, \mu_3> 0$ --- if $\lambda_1 =0$ or $\lambda_2 = 0$, we derive a contradiction as for \eqref{betticharacter_eq00}.
    If $ \mu_3 > a_{33}$, we could write $b  =  a_{30}d_0 + a_{31}d_1 + a_{32}d_2 +  (\mu_3-a_{33})d_3$, where $a_{30} = 0$ by what was proved before. Hence $\max \{a_{31},a_{32}\} > 0$, which implies $z_0 \, \mathcal{R} \, (0,a_{31},a_{32},\mu_3-a_{33}) \, \mathcal{R} \,z$, which is a contradiction. Thus $\mu_3 = a_{33}$ and $b = a_{33}d_3$, so $b \in U
    $.
\end{proof}

\begin{thm}\label{theorembettifind}
Assume that \eqref{catenary_assumption} holds. Then the whole set $\textup{Betti}(S)$ is contained in $U$. 
\end{thm}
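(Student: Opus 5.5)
The plan is to reduce to Lemma \ref{lemmabettifind}. Let $b\in\textup{Betti}(S)$. If $b$ labels a cube of $T$, Lemma \ref{lemmabettifind} gives $b\in U$. Since $T$ is an $L$-shape, every element of the Ap\'ery set labels a cube of $T$. So it remains to treat $b\notin\text{Ap}(S,d_0)$, i.e. $b-d_0\in S$. Then $b$ has a factorization with positive $d_0$-coordinate. As $b$ is Betti, there are two factorizations $z,z'$ in different $\mathcal{R}$-classes. I would first show we may take $z_0>0$ and $z'=(0,\lambda_1,\lambda_2,\lambda_3)$. Indeed, two factorizations both having positive $d_0$-coordinate are $\mathcal{R}$-related. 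So at most one $\mathcal{R}$-class contains factorizations using $d_0$, and any other class consists of factorizations with $z'_0=0$.

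Next, consider the cube $[[\lambda_1,\lambda_2,\lambda_3]]$. Its label $b$ is not in the Ap\'ery set, so the cube is not in $T$. Hence it lies in the region associated to some $v=(x,y,w)\in V$, i.e. $\lambda_1\ge x$, $\lambda_2\ge y$, $\lambda_3\ge w$. Put $u=\max\{x,0\}d_1+\max\{y,0\}d_2+\max\{w,0\}d_3\in U$ and write $xd_1+yd_2+wd_3=kd_0$ with $k\ge 0$. Then $u$ has the two factorizations
\[
e=(0,\max\{x,0\},\max\{y,0\},\max\{w,0\}),\qquad f=(k,\max\{-x,0\},\max\{-y,0\},\max\{-w,0\}).
\]
Moreover $r:=z'-e\in\mathbb{N}^4$ and $z'=e+r$. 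If $r=0$ then $b=u\in U$ and we are done. Otherwise $f+r$ is a factorization of $b$ sharing support with $z'=e+r$, so $z'\,\mathcal{R}\,(f+r)$. If $k>0$, then $f+r$ has positive $d_0$-coordinate, so $(f+r)\,\mathcal{R}\,z$, giving $z\,\mathcal{R}\,z'$, a contradiction. Thus whenever the covering point $v$ has $k>0$, we are forced to $r=0$ and $b=u\in U$.

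The main obstacle is the case where the only points of $V$ covering $[[\lambda_1,\lambda_2,\lambda_3]]$ have $k=0$. Such points encode relations purely among $d_1,d_2,d_3$, for instance $a_{33}d_3=a_{31}d_1+a_{32}d_2$ (by \eqref{catenary_assumption}, at most one of the bounding relations has zero $d_0$-coefficient). In this case $f+r$ again has zero $d_0$-coordinate and lies in the class of $z'$, so I would replace $z'$ by $f+r$ and iterate. This is in the spirit of the proof of Lemma \ref{lemmaminrelation1}: the relation lowers one coordinate by $a_{ii}$ while raising the others. To get termination, I would first try the potential ``the coordinate bounded by the $k=0$ relation strictly decreases, and the others stay bounded because the cube never enters $T$'' (the label $b$ is fixed and $T\cap\{\text{label }b\}=\emptyset$). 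This should produce, after finitely many steps, a factorization of $b$ in the class of $z'$ whose cube is covered by a point of $V$ with $k>0$. The previous paragraph then applies: either a contradiction, or $b$ equals the corresponding $u\in U$. Verifying that this iteration cannot cycle, using the minimality of $V$ and \eqref{catenary_assumption}, is the step I expect to require the most care.
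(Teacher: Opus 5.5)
Your treatment of covering points with $k>0$ is correct, and it is a real simplification of the paper's proof. You choose $z'$ in an $\mathcal{R}$-class containing no factorization that uses $d_0$; this is possible once $b\notin\textup{Ap}(S,d_0)$, and the Ap\'ery case is Lemma \ref{lemmabettifind}. Then $f+r$ links $z'$ to the $d_0$-class unless $r=0$. This disposes at once of the paper's first bullet, its long second bullet, and the $a_{10}>0$ part of its third bullet.

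The gap is the case $k=0$. This is exactly where the paper uses \eqref{catenary_assumption} (its third bullet with $a_{10}=0$), and you leave it as an unproved termination claim. The premise you cite does not hold for the $V$ in the statement. \eqref{catenary_assumption} only says the coefficients \emph{can be chosen} with at most one $a_{i0}=0$. It does not stop a minimal $V$ from containing $d_0$-free points for several indices, and such points can cancel. In $\langle 5,6,9,13\rangle$ one has $3\cdot 6=2\cdot 9=5+13$, so \eqref{catenary_assumption} holds. Yet a minimal $V$ may contain both $(3,-2,0)$ and $(-3,2,0)$, and then the step $c\mapsto c-v$ can undo itself. So no coordinate is monotone along your iteration, and nothing in the sketch excludes cycling.

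The missing observation is that the contradiction step of your $k>0$ argument never used $v\in V$. Any $w\in\mathbb{Z}^3$ with $w_1d_1+w_2d_2+w_3d_3$ a positive multiple of $d_0$, whose region contains $[[\lambda_1,\lambda_2,\lambda_3]]$, forces $(\lambda_1,\lambda_2,\lambda_3)$ to equal the positive part of $w$.

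\begin{itemize}
\item Suppose the cube is covered by a $d_0$-free point $v=(a_{11},-a_{12},-a_{13})$ with $a_{11}=b_{11}$. Applying the observation to $P_1$ gives $(\lambda_1,\lambda_2,\lambda_3)=(a_{11},0,0)$. This is the positive part of $v$, so $b=a_{11}d_1\in U$.
\item What remains is the index with $a_{ii}<b_{ii}$, which is unique by \eqref{catenary_assumption}; say $i=1$. There no $d_0$-positive substitute exists, and you need the paper's support count.
\item The other two indices have $a_{jj}=b_{jj}$. So a representation $a_{11}d_1=a_{12}d_2$ or $a_{11}d_1=a_{13}d_3$ would produce a $d_0$-positive representation of $a_{11}d_1$. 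Hence $a_{12},a_{13}>0$.
\item If $r\neq 0$, then $z'$ and $f+r$ lie in one class and together use $d_1,d_2,d_3$. So the $d_0$-class collapses to $\{(\mu_0,0,0,0)\}$.
\item Minimality of $a_{00}$ then forces $b=a_{00}d_0$, which the paper places in $U$.
\end{itemize}

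Alternatively, you could fix $V$ to consist of the $d_0$-positive points of Theorem \ref{theoremprocedure} plus the single $d_0$-free bounding point. Then your iteration would terminate trivially, since that coordinate drops by $a_{ii}$ at each step. But the statement allows any minimal $V$.
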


\begin{proof}
    Suppose that there exists an element $b \in \text{Betti}(S)$ that is not in $U$.
    There must exist a factorization $z_0 =(0,\kappa_1,\kappa_2,\kappa_3) \in \varphi^{-1}(b)$; otherwise, if every factorization $z \in \varphi^{-1}(b)$ had $0 \in \text{supp}(z)$, then all factorizations would be $\mathcal{R}$-related, which cannot happen for a Betti element. By Lemma \ref{lemmabettifind}, we have $[[\kappa_1,\kappa_2,\kappa_3]] \notin T$, hence $[[\kappa_1,\kappa_2,\kappa_3]]$ is in the region associated to some point $P \in V$.

    \begin{itemize}

    \item  If $P =(-\lambda_1,\lambda_2,\lambda_3)$, where
    \begin{equation}\label{betticharacter_eq1}
        \lambda_0d_0 + \lambda_1d_1 = \lambda_2d_2 + \lambda_3d_3,
    \end{equation}
    and $\lambda_i \in \mathbb{Z}_{>0}$ for $i =0,1,2,3$, we get
     \begin{equation}\label{betticharacter_eq10}
        b =\lambda_0d_0 + (\lambda_1+\delta_1)d_1 + \delta_2d_2 + \delta_3d_3  =  \delta_1d_1 + (\lambda_2+\delta_2)d_2 + (\lambda_3+\delta_3)d_3,
    \end{equation}
    for some $\delta_i \in \mathbb{N}$ with $\max \{\delta_1,\delta_2,\delta_3\} > 0$. The factorizations $z_1 =(\lambda_0,\lambda_1+\delta_1,\delta_2,\delta_3)$ and $z_2 =(0, \delta_1,\lambda_2+\delta_2,\lambda_3+\delta_3)$ have a nonempty intersection of supports, and hence they are $\mathcal{R}$-related. Moreover, we have $0,1,2,3 \in \textup{supp}(z_1) \cup \textup{supp}(z_2)$, so every factorization of $b$ will be $\mathcal{R}$-related to $z_1$ and $z_2$. This contradicts $b \in \text{Betti}(S)$, as it must have at least two different $\mathcal{R}$-classes. 
    
    The cases $P=(\lambda_1,-\lambda_2,\lambda_3)$ and $P=(\lambda_1,\lambda_2,-\lambda_3)$ follow analogously.
    
    \item If $P =(a_{01},a_{02},a_{03})$, where
    \begin{equation}\label{betticharacter_eq2}
        a_{00}d_0  =  a_{01}d_1 + a_{02}d_2 + a_{03}d_3,
    \end{equation}
     and $a_{01},a_{02},a_{03} \in \mathbb{N}$, we get
     \begin{equation}\label{betticharacter_eq3}
        b =a_{00}d_0 + \delta_1d_1 + \delta_2d_2 + \delta_3d_3  =  (a_{01}+\delta_1)d_1 + (a_{02}+\delta_2)d_2 + (a_{03}+\delta_3)d_3,
    \end{equation}
    for some $\delta_i \in \mathbb{N}$ with $\max \{\delta_1,\delta_2,\delta_3\} > 0$. Without loss of generality, suppose that $\delta_1 > 0$. The factorizations $z_1 =(a_{00},\delta_1,\delta_2,\delta_3)$ and $z_2 =(0,a_{01}+\delta_1,a_{02}+\delta_2,a_{03}+\delta_3)$ have a nonempty intersection of supports, and hence they are $\mathcal{R}$-related. Because $b \in \text{Betti}(S)$, there exists a factorization $z \in \varphi^{-1}(b)$ which is not $\mathcal{R}$-related to them. As $a_{00},\delta_1 >0$, we have $0,1 \notin \text{supp}(z)$. 

     First, suppose that $z = (0,0,\mu_2,\mu_3)$ where $\mu_2,\mu_3>0$. This implies $z_1 = (a_{00},\delta_1,0,0 )$ and $z_2 =(0,a_{01}+\delta_1,0,0)$. 
    If $\mu_2 \geq a_{22}$, we can write
   \begin{equation}\label{betticharacter_eq40}
         b =\mu_2d_2 + \mu_3d_3 = a_{20}d_0 + a_{21}d_1 +(\mu_2-a_{22})d_2 + (\mu_3+a_{23})d_3.
   \end{equation}
   If $a_{20}>0$ or $a_{21}>0$, then $z_1 \, \mathcal{R} \,(a_{20},a_{21},\mu_2-a_{22},\mu_3+a_{23}) \, \mathcal{R} \, z$, which is a contradiction. Hence $a_{20},a_{21}=0$ and $a_{22}d_2 = a_{23}d_3$, which implies $a_{23} \geq a_{33}$. We obtain
   \begin{equation}\label{betticharacter_eq401}
        b=  a_{30}d_0 + a_{31}d_1 + (\mu_2-a_{22} + a_{32})d_2 + (\mu_3+a_{23}-a_{33})d_3,
   \end{equation}
   and, as in \eqref{betticharacter_eq40}, this yields $a_{30},a_{31}=0$. Since now $a_{20},a_{30} =0$, we get a contradiction with \eqref{catenary_assumption}.
   If $\mu_3 \geq a_{33}$ we get the same contradiction. 
   
   Thus $\mu_2 < a_{22}$ and $ \mu_3 < a_{33}$. 
   Consider the cube $[[0,\mu_2,\mu_3]]$. If it is in $T$, then $b \in U$ by Lemma \ref{lemmabettifind}. Hence $[[0,\mu_2,\mu_3]]$ is in the region associated to a point $Q \in V$.
   
   As $\mu_2 < a_{22}$ and $ \mu_3 < a_{33}$, we have $Q =(-\nu_1,\nu_2,\nu_3)$, where
    \begin{equation}\label{betticharacter_eq41}
        - \nu_1d_1 + \nu_2d_2 +  \nu_3d_3   =  \nu_0d_0,
    \end{equation}
    $\nu_0,\nu_2,\nu_3 \in \mathbb{Z}_{>0}$, $\nu_1 \in \mathbb{N}$ (possibly $\nu_1=0$) and $\mu_2 \geq \nu_2$, $\mu_3 \geq \nu_3$.  We can write 
    \begin{equation}\label{betticharacter_eq42}
        b =  \nu_0d_0 + \nu_1d_1 + (\mu_2-\nu_2)d_2 + (\mu_3-\nu_3)d_3.
    \end{equation}
    We have $\max \{\mu_2-\nu_2,\mu_3-\nu_3 \}>0$, as $b \notin U$, hence $z \, \mathcal{R} \, (\nu_0,\nu_1,\mu_2-\nu_2,\mu_3-\nu_3) \, \mathcal{R}  \, z_1$, which is a contradiction.

    Next, suppose that $z = (0,0,\mu_2,0)$. 
    If $\mu_2 = a_{22} $, we would have $b = a_{22}d_2 \in U$, so $\mu_2 > a_{22}$. We can write
    \begin{equation}\label{betticharacter_eq6}
         b=\mu_2d_2 = a_{20}d_0 + a_{21}d_1 + (\mu_2-a_{22})d_2 + a_{23}d_3.
    \end{equation}
    Because $(\mu_2-a_{22}) > 0$, we have $z \, \mathcal{R} \, (a_{20},a_{21},\mu_2-a_{22},a_{23})$. Since $z$ is not $\mathcal{R}$-related to $z_1$, we have $a_{20} = a_{21} = 0$ (recall $a_{00},\delta_1 >0$). This gives the factorization $(0,0,\mu_2-a_{22},a_{23}) \in \varphi^{-1}(b)$ that has $(\mu_2-a_{22}),a_{23} > 0$ and which is not $\mathcal{R}$-related to $z_1$ nor $z_2$, as it is $\mathcal{R}$-related to $z$. Thus, we can proceed as in the case $z =(0,0,\mu_2,\mu_3)$. 

    The case $z = (0,0,0,\mu_3)$ follows analogously.

    \item   If $P =(a_{11},-a_{12},-a_{13})$, where
    \begin{equation}
        a_{11}d_1  =  a_{10}d_0 + a_{12}d_2 + a_{13}d_3,
    \end{equation}
     and $a_{10},a_{12},a_{13} \in \mathbb{N}$, we get
    \begin{equation}\label{betticharacter_eq8}
        b = (a_{11}+\delta_1)d_1 +   \delta_2d_2 + \delta_3d_3  =  a_{10}d_0 + \delta_1d_1+ (a_{12}+\delta_2)d_2 + (a_{13}+\delta_3)d_3,
    \end{equation}
    for some $\delta_i \in \mathbb{N}$ with $\max \{\delta_1,\delta_2,\delta_3\} > 0$. The factorizations $z_1 =(0,a_{11} +\delta_1,\delta_2,\delta_3)$ and $z_2 =(a_{10},\delta_1,a_{12}+\delta_2,a_{13}+\delta_3)$ have a nonempty intersection of supports, and hence they are $\mathcal{R}$-related. Because $b \in \text{Betti}(S)$, there exists a factorization $z \in \varphi^{-1}(b)$ that is not $\mathcal{R}$-related to them. 
    
        If $a_{12} = a_{13} = 0$, we get $P = (a_{11},0,0)$, so we are in the case $P = (a_{01},a_{02},a_{03})$ and we can proceed analogously as there. 
        
        If $a_{12},a_{13}>0$, then $1,2,3 \in \textup{supp}(z_1) \cup \textup{supp}(z_2)$ and thus $z = (\mu_0,0,0,0)$. If $\mu_0 = a_{00}$, we would have $b = a_{00}d_0 \in U$, hence $\mu_0 > a_{00}$. We have $z   \, \mathcal{R} \, (\mu_0-a_{00}, a_{01}, a_{02}, a_{03}) \, \mathcal{R} \, z_1  \, \mathcal{R} \, z_2$, since $\max\{a_{01}, a_{02}, a_{03}\} > 0$, which gives a contradiction. 
        
        Thus, without loss of generality, we can assume $a_{12}>0, a_{13} = 0$. 
        
        If $a_{10}>0$, then $0,1,2 \in \textup{supp}(z_1) \cup \textup{supp}(z_2)$ and thus $z = (0,0,0,\mu_3)$. 
        If $\mu_3 = a_{33}$, we would have $b = a_{33}d_3 \in U$, hence $\mu_3 > a_{33}$. We have $z \, \mathcal{R} \, (a_{30},a_{31},a_{32},\mu_3-a_{33}) \, \mathcal{R} \, z_1  \, \mathcal{R} \, z_2$, since $\max\{a_{30},a_{31},a_{32}\}>0$, which is a contradiction.  
        
        Consequently,
        \begin{equation}\label{betticharacter_eq81}
            \text{for all} \ (a_{10},a_{12},a_{13}) \in \mathbb{N}^3 \ \text{satisfying} \ a_{11}d_1 = a_{10}d_0 + a_{12}d_2 + a_{13}d_3, \ \text{we have} \ a_{10}=0.
        \end{equation}
        We also have $a_{11}d_1 = a_{12}d_2$ (recall that we have assumed $a_{13} = 0$), which implies $a_{12} \geq a_{22}$. This gives
        \begin{equation}\label{betticharacter_eq9}
            a_{11}d_1 = a_{20}d_0 + a_{21}d_1 + (a_{12}-a_{22})d_2 + a_{23}d_3,
        \end{equation}
        which implies $a_{20}=0$ by \eqref{betticharacter_eq81} (we also have $a_{21} = 0$ in \eqref{betticharacter_eq9}, by the minimality of $a_{11}$). Since now $a_{10},a_{20} = 0$, we get a contradiction with \eqref{catenary_assumption}.

    The cases $P = (-a_{21},a_{22},-a_{23})$ and $P = (-a_{31},-a_{32},a_{33})$ follow analogously, and the claim follows.
\end{itemize}
\end{proof}

\begin{remark}\label{remarkcatenary}
    There exists a weaker version of Theorem \ref{theorembettifind} that works for all numerical semigroups with arbitrary embedding dimension.
    Let $b \in \textup{Betti}(\langle d_0, \dots, d_k \rangle)$. Then for any index $i \in \{0,\dots,k\}$, there exists an element $s_i \in \textup{Ap}(\langle d_0, \dots, d_k \rangle,d_i)$ and an index $j \neq i$, $ j\in \{0,\dots,k\}$, such that $b = s_i + d_j$. See \cite[Proposition 2.2]{Rosales1996MinimalRelation} or \cite[Proposition 66]{AssiDAnnaGarciaSanchez2020}.
\end{remark}

In order to determine the catenary degree, the minimal presentation, and the set of Betti elements inside $U$, we will need additional lemmas. The following lemma implies that the common value of every minimal relation is a Betti element.

\begin{lemma}\label{lemma_catenary2}
    For any $i \in \{0,1,2,3\}$ and $(\mu_0,\mu_1,\mu_2,\mu_3) \in \varphi^{-1}( a_{ii}d_i)$ we have $\mu_i = 0$ or $\mu_i = a_{ii}$ and $\mu_j = 0$ for all $j \in \{0,1,2,3\} \setminus \{i\}$.
\end{lemma}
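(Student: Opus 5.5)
The plan is a direct argument from the minimality of $a_{ii}$. Fix $i$ and a factorization $(\mu_0,\mu_1,\mu_2,\mu_3) \in \varphi^{-1}(a_{ii}d_i)$, so that
$$a_{ii}d_i = \mu_0d_0 + \mu_1d_1 + \mu_2d_2 + \mu_3d_3.$$
We split into the three possibilities $\mu_i > a_{ii}$, $0 < \mu_i < a_{ii}$, and $\mu_i = a_{ii}$. The case $\mu_i = 0$ is exactly the first alternative of the statement, so there is nothing to do there.

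\textbf{Case $\mu_i > a_{ii}$.} Subtracting $a_{ii}d_i$ gives $0 = (\mu_i - a_{ii})d_i + \sum_{j \neq i} \mu_j d_j$. The right-hand side is positive, so this is impossible.

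\textbf{Case $0 < \mu_i < a_{ii}$.} Subtracting $\mu_i d_i$ from both sides gives
$$(a_{ii}-\mu_i)d_i = \sum_{j \neq i} \mu_j d_j,$$
with $a_{ii}-\mu_i$ a positive integer strictly smaller than $a_{ii}$ and all $\mu_j \in \mathbb{N}$. This is a relation of the same form as the minimal relation of $d_i$, so it contradicts the minimality of $a_{ii}$.

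\textbf{Case $\mu_i = a_{ii}$.} Here $\sum_{j\neq i}\mu_j d_j = 0$. Since every $d_j$ is positive and every $\mu_j$ is nonnegative, this forces $\mu_j = 0$ for all $j \neq i$, which is the second alternative of the statement.

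The only step needing any care is the middle case. One has to check that the definition of the minimal relations quantifies over \emph{all} nonnegative coefficient tuples for the indices $j\ne i$; for $i=0$ this is \eqref{minrelation-0}. With that in place the relation $(a_{ii}-\mu_i)d_i = \sum_{j\ne i}\mu_jd_j$ is admissible, so there is no real obstacle: the argument is a short contradiction with minimality.

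Finally, to see why this shows that $a_{ii}d_i$ is a Betti element, note that the factorization $a_{ii}e_i$ has support $\{i\}$, while by the lemma every other factorization has $i \notin \mathrm{supp}$. Hence $a_{ii}e_i$ forms a singleton $\mathcal{R}$-class. Some other factorization does exist, namely the one given by the minimal relation of $d_i$ itself, so $a_{ii}d_i$ has at least two $\mathcal{R}$-classes.
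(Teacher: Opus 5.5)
Your proof is correct and follows essentially the same route as the paper: both rewrite the factorization as $(a_{ii}-\mu_i)d_i=\sum_{j\neq i}\mu_jd_j$ and invoke the minimality of $a_{ii}$. The only difference is that you treat $\mu_i>a_{ii}$ as a separate case, whereas the paper disposes of it by noting that the right-hand side is nonnegative. Your closing remark about $a_{ii}d_i$ being a Betti element is also the consequence the paper draws from this lemma.
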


\begin{proof}
     Without loss of generality, suppose $i=0$. We obtain
     \begin{equation}\label{lemma_catenary2_eq1}
         (a_{00}-\mu_0)d_0 =  \mu_1d_1 + \mu_2d_2 + \mu_3d_3.
     \end{equation}
     The right-hand side is clearly nonnegative. If $(a_{00}-\mu_0) = 0$, we also have $\mu_1 =\mu_2 =\mu_3 = 0$.  If $(a_{00}-\mu_0) > 0$, then the minimality of $a_{00}$ implies $\mu_0 = 0$.
\end{proof}

\begin{lemma}\label{lemma_catenary3}
    Assume that \eqref{catenary_assumption} holds.  If $(-\lambda_1,\lambda_2,\lambda_3) \in V$ satisfies
    $$ \lambda_0d_0 + \lambda_1d_1 = \lambda_2d_2 + \lambda_3d_3,$$
    with $\lambda_i \in \mathbb{Z}_{>0}$ and $a_{00}>\lambda_0$, then for any $(\mu_0,\mu_1,\mu_2,\mu_3) \in \varphi^{-1}(\lambda_0d_0 + \lambda_1d_1)$, we have $(\mu_0,\mu_1,\mu_2,\mu_3) = (0,0,\lambda_2,\lambda_3)$ or $\mu_2=\mu_3 = 0$. An analogous statement holds for $(\lambda_1,-\lambda_2,\lambda_3)$ and $(\lambda_1,\lambda_2,-\lambda_3) \in V$.
\end{lemma}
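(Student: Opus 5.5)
The plan is to compare an arbitrary factorization $\mu=(\mu_0,\mu_1,\mu_2,\mu_3)$ of $b=\lambda_0d_0+\lambda_1d_1$ with the two known factorizations $(\lambda_0,\lambda_1,0,0)$ and $(0,0,\lambda_2,\lambda_3)$. Assume $\mu_2+\mu_3>0$; we must show $\mu=(0,0,\lambda_2,\lambda_3)$. Comparing $\mu$ with $(0,0,\lambda_2,\lambda_3)$ gives
$$\mu_0d_0+\mu_1d_1+(\mu_2-\lambda_2)d_2+(\mu_3-\lambda_3)d_3=0.$$
If $\mu_2\ge\lambda_2$ and $\mu_3\ge\lambda_3$, every coefficient is nonnegative, so all vanish and $\mu=(0,0,\lambda_2,\lambda_3)$. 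So the work is to rule out $\mu_2<\lambda_2$ or $\mu_3<\lambda_3$.

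The tool will be the minimality of $V$. Within the first octant, the region associated to $(-\lambda_1,\lambda_2,\lambda_3)$ removes exactly the cubes $[[x,y,z]]$ with $y\ge\lambda_2$ and $z\ge\lambda_3$. By minimality of $V$, dropping this point must leave a cube outside $T$ that no other point of $V$ removes. Since every cube of the region dominates $[[0,\lambda_2,\lambda_3]]$ coordinatewise, it suffices to show that $[[0,\lambda_2,\lambda_3]]$ is already removed by a region satisfying Lemma \ref{1remlemma} or Lemma \ref{lemmaminrelation1}. That region then contains the whole region of $(-\lambda_1,\lambda_2,\lambda_3)$. Since $T$ is an $L$-shape, any such region is disjoint from $T$, so it is covered by the regions of the other points of $V$. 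Hence the point would be redundant, a contradiction.

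The case analysis runs on the signs of $\lambda_2-\mu_2$ and $\lambda_3-\mu_3$.

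\emph{Both nonnegative, not both zero.} Then
$$\mu_0d_0+\mu_1d_1=(\lambda_2-\mu_2)d_2+(\lambda_3-\mu_3)d_3.$$
If $\mu_0>0$, the point $(-\mu_1,\lambda_2-\mu_2,\lambda_3-\mu_3)$ gives a positive multiple of $d_0$. Its associated region strictly contains that of $(-\lambda_1,\lambda_2,\lambda_3)$, because $\mu_2+\mu_3>0$; by Lemma \ref{1remlemma} it removes nothing from the Apéry set, which is the desired contradiction. If $\mu_0=0$, we get a relation $\mu_1d_1=(\lambda_2-\mu_2)d_2+(\lambda_3-\mu_3)d_3$. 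Then $\mu_1\ge a_{11}$, and the cube $[[0,\lambda_2-\mu_2,\lambda_3-\mu_3]]$ and the cube $[[\mu_1,0,0]]$ carry the same label. Using Lemma \ref{lemmaminrelation2} and the fact that $T\subseteq\{x\le a_{11}\}$, this contradicts $T$ being an $L$-shape, or, where needed, \eqref{catenary_assumption}.

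\emph{Mixed signs}, say $\mu_2<\lambda_2$ and $\mu_3>\lambda_3$. Then
$$\mu_0d_0+\mu_1d_1+(\mu_3-\lambda_3)d_3=(\lambda_2-\mu_2)d_2.$$
If $\mu_0>0$ this forces $\lambda_2-\mu_2\ge b_{22}$, hence $\lambda_2\ge b_{22}$. This contradicts $(-\lambda_1,\lambda_2,\lambda_3)$ being needed in $V$, since the region $\{y>b_{22}\}$ already covers its region. If $\mu_0=0$, then $\lambda_2\ge\lambda_2-\mu_2\ge a_{22}$, and Lemma \ref{lemmaminrelation1} gives the same redundancy.

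The hypothesis $a_{00}>\lambda_0$ will be used in the comparison with $(\lambda_0,\lambda_1,0,0)$, namely
$$(\lambda_0-\mu_0)d_0+(\lambda_1-\mu_1)d_1=\mu_2d_2+\mu_3d_3.$$
Its role is to exclude $\mu_0\ge\lambda_0$ together with $\mu_1\ge\lambda_1$, and to guarantee $\mu_0<a_{00}$, so that no factorization of $b$ is absorbed into the minimal relation of $d_0$. I expect the main obstacle to be the $\mu_0=0$ subcases. There one must argue carefully, via \eqref{catenary_assumption} and the bounds $a_{ii}$ on $T$ from Proposition \ref{proptrick}, that the relation produced genuinely yields a region covering that of $(-\lambda_1,\lambda_2,\lambda_3)$, rather than a relation among $d_1,d_2,d_3$ alone that Lemma \ref{lemmaminrelation1} cannot use directly. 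I would first try to reduce such a relation to a minimal relation $a_{ii}d_i=\dots$ with $a_{i0}>0$, using \eqref{catenary_assumption}. The analogous statements for the points $(\lambda_1,-\lambda_2,\lambda_3)$ and $(\lambda_1,\lambda_2,-\lambda_3)$ in $V$ follow by permuting indices.
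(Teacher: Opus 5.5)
Your case split (signs of $\lambda_2-\mu_2$ and $\lambda_3-\mu_3$, and whether $\mu_0>0$) is sensible. However, the mechanism you propose for producing contradictions is wrong, and the decisive case is left open.

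\textbf{The reduction principle.} Showing that $[[0,\lambda_2,\lambda_3]]$ lies in some region satisfying Lemma \ref{1remlemma} proves nothing. The region of $P=(-\lambda_1,\lambda_2,\lambda_3)$ is itself such a region. Moreover, a region being disjoint from $T$ only means it is covered by the regions of \emph{all} points of $V$, possibly by $P$ alone.

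What minimality of $V$ actually gives is the following. The cubes $[[0,\lambda_2-1,\lambda_3]]$ and $[[0,\lambda_2,\lambda_3-1]]$ lie in $T$, hence so does $[[0,\lambda_2-1,\lambda_3-1]]$: a point of $V$ deleting one of them would have a region containing that of $P$. The same reasoning gives $\lambda_2<a_{22}$ and $\lambda_3<a_{33}$. A contradiction is then obtained by exhibiting a point as in Lemma \ref{1remlemma} whose region contains one of these cubes of $T$. Once you state this, your $\mu_0>0$ case (via the point $(-\mu_1,\lambda_2-\mu_2,\lambda_3-\mu_3)$) and your mixed-sign cases go through.

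\textbf{The genuine gap.} The unresolved case is $\mu_0=0$ with $\mu_2\le\lambda_2$, $\mu_3\le\lambda_3$, not both equalities. Your route through $\mu_1\ge a_{11}$ does not close:
\begin{itemize}
\item $[[\mu_1,0,0]]\notin T$, so its sharing a label with a cube of $T$ violates nothing about $T$ being an $L$-shape.
\item Lemma \ref{lemmaminrelation2} only concerns cubes with all coordinates below the $a_{ii}$.
\item \eqref{catenary_assumption} allows $a_{10}=0$ to be forced. In that case $\mu_1d_1$ can genuinely be an Ap\'ery element, so no reduction to a relation with positive $d_0$-coefficient is available.
\end{itemize}

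\textbf{The missing idea.} Compare $\mu$ with $(\lambda_0,\lambda_1,0,0)$, which gives $\lambda_0d_0+(\lambda_1-\mu_1)d_1=\mu_2d_2+\mu_3d_3$.
\begin{itemize}
\item If $\mu_1\ge\lambda_1$, this expresses $\lambda_0d_0$, with $0<\lambda_0<a_{00}$, as a nonnegative combination of $d_1,d_2,d_3$, contradicting the minimality of $a_{00}$. This is the actual role of the hypothesis $a_{00}>\lambda_0$. The role you assign it, excluding $\mu_0\ge\lambda_0$ together with $\mu_1\ge\lambda_1$, is automatic once $\mu_2+\mu_3>0$.
\item Hence $\mu_1<\lambda_1$. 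The point $(\mu_1-\lambda_1,\mu_2,\mu_3)$ then has value $\lambda_0d_0>0$, and its region contains $[[0,\lambda_2-1,\lambda_3]]$ or $[[0,\lambda_2,\lambda_3-1]]$, both in $T$. This contradicts Lemma \ref{1remlemma}.
\end{itemize}
Note that \eqref{catenary_assumption} enters only through the construction of $T$; the argument itself never uses it.
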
   

\begin{proof}
Note that $a_{22} > \lambda_2$ and $a_{33} > \lambda_3$ from the definition of $V$. 

Suppose $\mu_0>0$. If $\mu_2 > 0$, then by Lemma \ref{1remlemma}, no element of the Ap{\'e}ry set is a label in the region associated to $(-\mu_1,\lambda_2-\mu_2,\lambda_3-\mu_3)$. This is a contradiction, as $[[0,\lambda_2-1, \lambda_3]] \in T$. If $\mu_3 > 0$, we get the same contradiction, hence $\mu_2=\mu_3 = 0$.

Thus, suppose $\mu_0 = 0$. If $\mu_2 \geq \lambda_2$, then we get
\begin{equation}\label{lemma_catenary3_eq1}
     (\lambda_3-\mu_3)d_3=\mu_1d_1 + (\mu_2 - \lambda_2)d_2,
\end{equation}
where all coefficients above are nonnegative. If $(\lambda_3-\mu_3) > 0$, we get a contradiction with $a_{33} > \lambda_3$, hence $(\lambda_3-\mu_3) = 0$, which gives $(\mu_0,\mu_1,\mu_2,\mu_3) = (0,0,\lambda_2,\lambda_3)$. If $\mu_3 \geq \lambda_3$, we get the same thing; therefore, suppose that $\mu_2 < \lambda_2$ and $\mu_3 < \lambda_3$. We have
\begin{equation}\label{lemma_catenary3_eq2}
   \lambda_0d_0 + (\lambda_1-\mu_1)d_1 = \mu_2d_2 + \mu_3d_3.
\end{equation}
If $(\lambda_1-\mu_1) \leq 0$, we get a contradiction with $a_{00} > \lambda_0$, hence $(\lambda_1-\mu_1) > 0$. Since $\lambda_0>0$, according to Lemma \ref{1remlemma}, no element of the Ap{\'e}ry set is a label in the region associated to $(\mu_1 - \lambda_1,\mu_2,\mu_3)$. This is a contradiction, as $[[0,\lambda_2-1,\lambda_3-1]] \in T$.

An analogous argument applies to $(\lambda_1,-\lambda_2,\lambda_3)$ and $(\lambda_1,\lambda_2,-\lambda_3) \in V$.
\end{proof}

\begin{lemma}\label{lemma_catenary1}
    Assume that \eqref{catenary_assumption} holds. If $(-\lambda_1,\lambda_2,\lambda_3) \in V$ satisfies
    $$ \lambda_0d_0 + \lambda_1d_1 = \lambda_2d_2 + \lambda_3d_3,$$
    with $\lambda_i \in \mathbb{Z}_{>0}$, $a_{00}>\lambda_0$, and $a_{11} > \lambda_1$, then $\varphi^{-1}(\lambda_0d_0 + \lambda_1d_1) = \{(\lambda_0,\lambda_1,0,0), (0,0,\lambda_2,\lambda_3) \} $. An analogous statement holds for $(\lambda_1,-\lambda_2,\lambda_3)$ and $(\lambda_1,\lambda_2,-\lambda_3) \in V$.
\end{lemma}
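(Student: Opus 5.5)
Building on Lemma \ref{lemma_catenary3}, it suffices to pin down the factorizations $(\mu_0,\mu_1,\mu_2,\mu_3)\in\varphi^{-1}(\lambda_0d_0+\lambda_1d_1)$ with $\mu_2=\mu_3=0$. That lemma already shows every factorization is either $(0,0,\lambda_2,\lambda_3)$ or of the form $(\mu_0,\mu_1,0,0)$. So the plan is to show that the only factorization of the second kind is $(\lambda_0,\lambda_1,0,0)$, using the two extra hypotheses $a_{00}>\lambda_0$ and $a_{11}>\lambda_1$.

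Suppose $\mu_0d_0+\mu_1d_1=\lambda_0d_0+\lambda_1d_1$ with $\mu_0,\mu_1\in\mathbb{N}$ and $(\mu_0,\mu_1)\neq(\lambda_0,\lambda_1)$. Then $(\mu_0-\lambda_0)d_0=(\lambda_1-\mu_1)d_1$, and the two differences are nonzero with the same sign. We split into two cases.

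\begin{itemize}
\item If $\mu_0>\lambda_0$, then $\lambda_1-\mu_1>0$ and $(\lambda_1-\mu_1)d_1=(\mu_0-\lambda_0)d_0$. This is a relation for $d_1$ with nonnegative coefficients, so minimality of $a_{11}$ gives $a_{11}\le\lambda_1-\mu_1\le\lambda_1$, contradicting $a_{11}>\lambda_1$.
\item If $\mu_0<\lambda_0$, then $(\lambda_0-\mu_0)d_0=(\mu_1-\lambda_1)d_1$ with both sides positive. Minimality of $a_{00}$ gives $a_{00}\le\lambda_0-\mu_0\le\lambda_0$, contradicting $a_{00}>\lambda_0$.
\end{itemize}

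Hence $\varphi^{-1}(\lambda_0d_0+\lambda_1d_1)\subseteq\{(\lambda_0,\lambda_1,0,0),(0,0,\lambda_2,\lambda_3)\}$. Both tuples are genuine factorizations by the assumed identity, so equality holds.

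For the points $(\lambda_1,-\lambda_2,\lambda_3)$ and $(\lambda_1,\lambda_2,-\lambda_3)\in V$, the same argument applies verbatim after permuting indices. We invoke the analogous part of Lemma \ref{lemma_catenary3} and then use the minimality of $a_{00}$ and of $a_{22}$ (respectively $a_{33}$) in place of $a_{11}$.

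The only real content lies in the reduction supplied by Lemma \ref{lemma_catenary3}, which may be taken as given. The one point needing care is that Lemma \ref{lemma_catenary3} allows $\mu_2=\mu_3=0$ with arbitrary $\mu_0,\mu_1$, including $\mu_0=0$ or $\mu_1=0$. These boundary subcases are covered by the case split above, since the argument never assumed $\mu_0$ or $\mu_1$ positive.
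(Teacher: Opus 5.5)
Your proof is correct and follows essentially the same route as the paper. Both use Lemma \ref{lemma_catenary3} to force $\mu_2=\mu_3=0$ for any other factorization, then split on the sign of $\mu_0-\lambda_0$ and contradict the minimality of $a_{00}$ (when $\mu_0<\lambda_0$) or of $a_{11}$ (when $\mu_0>\lambda_0$).
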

\begin{proof}
Note that $a_{22} > \lambda_2$ and $a_{33} > \lambda_3$ from the definition of $V$.
Suppose that there exists a third factorization $ (\mu_0,\mu_1,\mu_2,\mu_3) \in \varphi^{-1}(\lambda_0d_0 + \lambda_1d_1)$. 

By Lemma \ref{lemma_catenary3}, we have $\mu_2 = \mu_3 = 0 $. 
If $\lambda_0 \geq \mu_0$, we get
\begin{equation}\label{lemma_catenary1_eq1}
    (\lambda_0- \mu_0) d_0 =  (\mu_1 - \lambda_1)d_1,
\end{equation}
where all coefficients above are nonnegative. In fact, they are positive, as otherwise $(\mu_0,\mu_1,\mu_2,\mu_3) = (\lambda_0,\lambda_1,0,0)$. This contradicts $a_{00}>\lambda_0$. If $\lambda_0 < \mu_0$, we have $\lambda_1 > \mu_1$, and we get a contradiction with $a_{11}>\lambda_1$. 

An analogous argument applies to $(\lambda_1,-\lambda_2,\lambda_3)$ and $(\lambda_1,\lambda_2,-\lambda_3) \in V$.
\end{proof}

\begin{lemma}\label{lemma_catenarycase1}
    Assume that \eqref{catenary_assumption} holds. Consider $(-\lambda_1,\lambda_2,\lambda_3) \in V$ satisfying
    $$ \lambda_0d_0 + \lambda_1d_1 = \lambda_2d_2 + \lambda_3d_3,$$
    with $\lambda_i \in \mathbb{Z}_{>0}$, and $\lambda_0 \geq a_{00}$ or $\lambda_1 \geq a_{11}$. If $a_{00}d_0 \neq a_{11}d_1$, then $\lambda_0d_0 + \lambda_1d_1 \notin \textup{Betti}(S)$. Otherwise, if $a_{00}d_0 = a_{11}d_1$, then $\lambda_0d_0 + \lambda_1d_1 \in \textup{Betti}(S)$ and $\varphi^{-1}(\lambda_0d_0 + \lambda_1d_1) = \mathcal{F}$, where 
    $$\mathcal{F} =  \{(0,0,\lambda_2,\lambda_3)\}  \cup \{(\lambda_0 + \lambda a_{00},\lambda_1 - \lambda a_{11},0,0) \mid \lambda \in \mathbb{Z}, \lambda_0 + \lambda a_{00} \geq 0, \lambda_1 - \lambda a_{11} \geq 0 \} . $$   
    An analogous statement holds for $(\lambda_1,-\lambda_2,\lambda_3)$ and $(\lambda_1,\lambda_2,-\lambda_3) \in V$.

\end{lemma}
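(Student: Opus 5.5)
The plan is to set $b=\lambda_0d_0+\lambda_1d_1$ and study $\mathcal{R}$-classes in $\varphi^{-1}(b)$. Throughout I use that $(-\lambda_1,\lambda_2,\lambda_3)\in V$ forces $\lambda_2<a_{22}$ and $\lambda_3<a_{33}$, and that $[[0,\lambda_2-1,\lambda_3-1]]\in T$. In both cases the two factorizations $z_1=(\lambda_0,\lambda_1,0,0)$ and $z_2=(0,0,\lambda_2,\lambda_3)$ have disjoint supports whose union is $\{0,1,2,3\}$. Hence $b\notin\textup{Betti}(S)$ exactly when some factorization links them.

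\emph{Case $a_{00}d_0\neq a_{11}d_1$.} Suppose first that $\lambda_0\ge a_{00}$. I would pick a minimal relation $a_{00}d_0=a_{01}d_1+a_{02}d_2+a_{03}d_3$ with $a_{02}+a_{03}>0$. If no such relation existed, every minimal relation of $d_0$ would read $a_{00}d_0=a_{01}d_1$. Then $a_{01}\ge a_{11}$, and the argument of Lemma \ref{lemma2cases} (expand $a_{01}d_1$ using a minimal relation of $d_1$ and invoke minimality of $a_{00}$) would give $a_{01}=a_{11}$, i.e.\ $a_{00}d_0=a_{11}d_1$, which is excluded. With such a relation in hand, the factorization
$$(\lambda_0-a_{00},\ \lambda_1+a_{01},\ a_{02},\ a_{03})$$
contains $1$ in its support (since $\lambda_1>0$), so it is $\mathcal{R}$-related to $z_1$. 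It also meets $\textup{supp}(z_2)$. So $z_1\,\mathcal{R}\,z_2$.

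Any further factorization has nonempty support and therefore meets $\textup{supp}(z_1)\cup\textup{supp}(z_2)=\{0,1,2,3\}$. Thus there is a single $\mathcal{R}$-class and $b\notin\textup{Betti}(S)$. The case $\lambda_1\ge a_{11}$ is symmetric. There I would use a minimal relation of $d_1$ with $a_{12}+a_{13}>0$, which exists unless $a_{11}d_1=a_{10}d_0$, and that in turn again forces $a_{00}d_0=a_{11}d_1$. The linking factorization is then $(\lambda_0+a_{10},\lambda_1-a_{11},a_{12},a_{13})$.

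\emph{Case $a_{00}d_0=a_{11}d_1$.} Since $z_2$ can only be $\mathcal{R}$-related to factorizations with $2$ or $3$ in their support, it suffices to show $\varphi^{-1}(b)=\mathcal{F}$; then $z_2$ forms its own $\mathcal{R}$-class and $b\in\textup{Betti}(S)$.

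For factorizations with $\mu_2=\mu_3=0$: from $(\mu_0-\lambda_0)d_0=(\lambda_1-\mu_1)d_1$, the minimality of $a_{00}$ (and of $a_{11}$) implies that the solutions are exactly the integer multiples of $(a_{00},-a_{11})$. This yields the second part of $\mathcal{F}$.

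For factorizations with $\mu_2+\mu_3>0$, I would first replace $(\lambda_0,\lambda_1)$ by the element $(\lambda_0',\lambda_1')$ of $\mathcal{F}$ with $\lambda_0'<a_{00}$; this is obtained by converting blocks $a_{00}d_0$ into $a_{11}d_1$. Then I would rerun the proof of Lemma \ref{lemma_catenary3} with $(\lambda_0',\lambda_1')$ in place of $(\lambda_0,\lambda_1)$. That proof only used $a_{00}>\lambda_0$, the bounds $\lambda_2<a_{22}$ and $\lambda_3<a_{33}$, Lemma \ref{1remlemma}, and the cubes $[[0,\lambda_2-1,\lambda_3]]$ and $[[0,\lambda_2-1,\lambda_3-1]]$ of $T$. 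Its conclusion is that such a factorization must be $(0,0,\lambda_2,\lambda_3)$.

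The main obstacle is the degenerate situation where $\lambda_0'=0$ or $\lambda_1'=0$, since Lemma \ref{lemma_catenary3} assumes all $\lambda_i$ positive. If $\lambda_0'=0$, then $b=\lambda_1'd_1=\lambda_2d_2+\lambda_3d_3$. In that case I would argue directly, as in Lemma \ref{lemma_catenary2}, using the minimality of $a_{11}$ together with $\lambda_2<a_{22}$ and $\lambda_3<a_{33}$ to exclude any other factorization that has $2$ or $3$ in its support. The analogous statements for $(\lambda_1,-\lambda_2,\lambda_3)$ and $(\lambda_1,\lambda_2,-\lambda_3)$ follow by permuting indices.
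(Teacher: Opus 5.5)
Your treatment of the case $a_{00}d_0\neq a_{11}d_1$ is fine. Securing a minimal relation of $d_0$ that involves $d_2$ or $d_3$ via the argument of Lemma~\ref{lemma2cases} is a clean alternative to the paper's chain through a relation of $d_1$. In the case $a_{00}d_0=a_{11}d_1$, your reduction to the element of $\mathcal{F}$ with first coordinate below $a_{00}$ is exactly what the paper does when it invokes Lemma~\ref{lemma_catenary3}.

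You are also right that this reduction breaks down when $a_{00}\mid\lambda_0$. The element in question is then $(0,\lambda_1',0,0)$, whereas Lemma~\ref{lemma_catenary3} needs a positive $d_0$-coefficient; the paper's proof does not address this subcase explicitly. The gap is in your patch for it.

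Here $b=\lambda_1'd_1$ with $\lambda_1'=\lambda_1+(\lambda_0/a_{00})a_{11}>a_{11}$. So Lemma~\ref{lemma_catenary2} does not apply, and its minimality argument only gives $\lambda_1'-\mu_1\ge a_{11}$, not $\mu_1=0$. After the easy reductions, you must exclude a factorization $(\mu_0,\mu_1,\mu_2,\mu_3)$ with $\mu_2<\lambda_2$, $\mu_3<\lambda_3$ and $\mu_2+\mu_3>0$, that is,
\[
\mu_0d_0+\mu_1d_1=(\lambda_2-\mu_2)d_2+(\lambda_3-\mu_3)d_3
\]
with both coefficients on the right positive. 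Minimality of $a_{11}$ yields at most $\mu_1\ge a_{11}$ (when $\mu_0=0$). The bounds $\lambda_2<a_{22}$ and $\lambda_3<a_{33}$ only rule out single-generator relations. So nothing you list produces a contradiction. The missing ingredient is the geometric one: Lemma~\ref{1remlemma} combined with cubes of $T$ lying below $[[0,\lambda_2,\lambda_3]]$.

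With that ingredient the degenerate case closes quickly. In the proof of Lemma~\ref{lemma_catenary3}, the subcases $\mu_0>0$, $\mu_2\ge\lambda_2$ and $\mu_3\ge\lambda_3$ never use $\lambda_0$, so they carry over verbatim. In the remaining subcase $\mu_0=0$, $\mu_2<\lambda_2$, $\mu_3<\lambda_3$, $\mu_2+\mu_3>0$, you have $(\lambda_1'-\mu_1)d_1=\mu_2d_2+\mu_3d_3>0$. Hence $\lambda_1'-\mu_1\ge a_{11}$, and trading $a_{11}d_1$ for $a_{00}d_0$ gives
\[
(\mu_1+a_{11}-\lambda_1')d_1+\mu_2d_2+\mu_3d_3=a_{00}d_0 .
\]
The region associated to $(\mu_1+a_{11}-\lambda_1',\mu_2,\mu_3)$ contains $[[0,\lambda_2-1,\lambda_3-1]]\in T$, contradicting Lemma~\ref{1remlemma}.

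The same trade makes the reduction to $\lambda_0'<a_{00}$ unnecessary altogether. When comparing any factorization with $(0,0,\lambda_2,\lambda_3)$, you can always arrange a positive $d_0$-coefficient, trading $a_{11}d_1$ for $a_{00}d_0$ if $\mu_0=0$. The associated region then contains $[[0,\lambda_2-\mu_2,\lambda_3-\mu_3]]$. That cube lies in $T$ by minimality of $V$, since it sits strictly below $[[0,\lambda_2,\lambda_3]]$.
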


\begin{proof}
    Note that $a_{22} > \lambda_2$ and $a_{33} > \lambda_3$ from the definition of $V$.
    
    Suppose $a_{00}d_0 \neq a_{11}d_1$ and $\lambda_0 \geq a_{00}$. If $\max \{a_{02},a_{03}\} > 0$, then
    \begin{equation}
        (\lambda_0,\lambda_1,0,0) \, \mathcal{R} \, (\lambda_0 - a_{00},\lambda_1 + a_{01},a_{02},a_{03}) \, \mathcal{R} \, (0,0,\lambda_2,\lambda_3),
    \end{equation}
    which results in only one $\mathcal{R}$-class. If $a_{02}=a_{03}=0$, then $a_{00}d_0 = a_{01}d_1$ with $a_{01} > a_{11}$ (as $a_{00}d_0 \neq a_{11}d_1$). Also, we have $a_{10} = 0$, because otherwise we would get a contradiction with the minimality of $a_{00}$. This implies  $\max \{a_{12},a_{13}\} > 0$ and hence 
    \begin{equation}
        (\lambda_0,\lambda_1,0,0) \, \mathcal{R} \, (\lambda_0 - a_{00},\lambda_1 + a_{01},0,0) \, \mathcal{R} \, (\lambda_0 - a_{00},\lambda_1 + a_{01}-a_{11},a_{12},a_{13}) \, \mathcal{R} \, (0,0,\lambda_2,\lambda_3),
    \end{equation}
    which results in only one $\mathcal{R}$-class. If $a_{00}d_0 \neq a_{11}d_1$ and $\lambda_0 < a_{00}$, then $\lambda_1 \geq a_{11}$ and we can apply the same argument as above. Thus, the first result follows.

    Suppose $a_{00}d_0 = a_{11}d_1$. Clearly $\mathcal{F} \subseteq \varphi^{-1}(\lambda_0d_0 + \lambda_1d_1)$. Suppose that there exists $(\mu_0,\mu_1,\mu_2,\mu_3) \in \varphi^{-1}(\lambda_0d_0 + \lambda_1d_1)$ with $(\mu_0,\mu_1,\mu_2,\mu_3) \notin \mathcal{F}$. As $(\mu_0,\mu_1,\mu_2,\mu_3) \neq (0,0,\lambda_2,\lambda_3)$, we have $\mu_2=\mu_3 = 0$, by Lemma \ref{lemma_catenary3} (we can apply this lemma, because in $\mathcal{F}$, there exists a factorization that satisfies its assumptions). Then, as $(\mu_0,\mu_1,0,0) \notin \mathcal{F}$, we can find $\mu \in \mathbb{Z}$ such that $\lambda_0 + a_{00} > \mu_0 + \mu a_{00} > \lambda_0$. This gives
    \begin{equation}
        \lambda_0d_0 + \lambda_1d_1 = (\mu_0 + \mu a_{00})d_0 + (\mu_1 - \mu a_{11})d_1 \implies (\mu_0 + \mu a_{00} - \lambda_0)d_0 = (\lambda_1 - \mu_1 + \mu a_{11})d_1,  
    \end{equation}
    which contradicts $ a_{00} > (\mu_0 + \mu a_{00} - \lambda_0) > 0$.

    An analogous argument applies to $(\lambda_1,-\lambda_2,\lambda_3)$ and $(\lambda_1,\lambda_2,-\lambda_3) \in V$.
\end{proof}

Lemmas \ref{lemma_catenary1} and \ref{lemma_catenarycase1} determine which elements of $U$ that arise from equations of the form $\lambda_0d_0 + \lambda_id_i = \lambda_jd_j + \lambda_k d_k$ are Betti elements. Moreover, they provide their corresponding sets of factorizations. In combination with Lemma \ref{lemma_catenary2}, we obtain a precise description of which elements of $U$ belong to $\textup{Betti}(S)$. However, Lemma \ref{lemma_catenary2} does not determine the sets $\varphi^{-1}(a_{ii}d_i)$. As we will see in Sections \ref{findingtheFnumber} and \ref{triangularnumbers}, this will not be an obstacle when determining the catenary degree and the minimal presentation.

\begin{remark}\label{remark_a00_procedure}
    The procedure from Section \ref{geometricprocedure} retrieves $a_{11},a_{22},a_{33}$ via Proposition \ref{propminimalrelations}, but does not immediately yield $a_{00}$. By Lemma \ref{lemma_catenary2}, $a_{00}d_0 \in \textup{Betti}(S)$, and since $\textup{Betti}(S) \subseteq U$ by Theorem \ref{theorembettifind}, we have $a_{00}d_0 \in U$. Thus, $a_{00}$ can be identified as the smallest $\lambda \in \mathbb{Z}_{>0}$ such that $\lambda d_0 \in U$. To verify membership in $U$: for elements of the form $a_{ii}d_i$, $i \in \{1,2,3\}$, one checks whether they are divisible by $d_0$; for elements that arise from equations of the form $\lambda_0d_0 + \lambda_id_i = \lambda_jd_j + \lambda_k d_k$, their sets of factorizations are determined by Lemmas \ref{lemma_catenary1} and \ref{lemma_catenarycase1}, and can be inspected directly; and all other elements of $U$ are just equal to $a_{00}d_0$ by Theorem \ref{theoremprocedure}.
\end{remark}

\section{Four consecutive squares}\label{findingtheFnumber}

In this section, we will find the Frobenius number, the genus, the set of Betti elements, the catenary degree, and a minimal presentation of the numerical semigroup generated by four consecutive squares; $S = \langle n^2,(n+1)^2,(n+2)^2,(n+3)^2 \rangle $.
We will use the methods developed in Sections \ref{geometricprocedure} and \ref{bettielements}, where we put $(d_0,d_1,d_2,d_3)=(n^2,(n+1)^2,(n+2)^2,(n+3)^2)$.

We will call $T$ the figure that will be obtained by removing regions from the initial collection of cubes in each case of $n \pmod{12}$. The cubes of $T$ will be labeled (as always), where the cube $[[a,b,c]]$ will be labeled with $ad_1 + bd_2 + cd_3$. We will know that the set of labels of $T$ is the Ap{\'e}ry set with respect to $d_0 = n^2$ after we calculate that $T$ has exactly $d_0 = n^2$ cubes.
Since we have the formula $F(S) = \max\{\text{Ap}(S,a) \}- a$, in order to compute the Frobenius number, we need to find the largest label in $T$. In addition, the formula $G(S) = \frac{1}{a} \left( \sum_{s \in \text{Ap}(S,a)} s \right) - \frac{a-1}{2}$ reduces the search for the genus to finding the sum of labels in $T$.

We sum up the results of this section in Corollaries \ref{corfrobsquares}-\ref{corbettisquares}.

\subsection{$n=12k, \ k>1$}

\subsubsection{Ap{\'e}ry set}
We have the following:
\begin{align}
    (7k+2)(12k)^2 &= 2(12k+1)^2 + (3k-5)(12k+2)^2 + (4k+2)(12k+3)^2, \label{12kminrelation0} \\
    (6k+4)(12k+1)^2 &= 1(12k)^2 + 1(12k+2)^2 + 6k(12k+3)^2, \label{12kminrelation1} \\
    9k(12k+2)^2 &= (5k+1)(12k)^2 + 0(12k+1)^2 + 4k(12k+3)^2, \label{12kminrelation2}\\
    (6k+1)(12k+3)^2 &= 0(12k)^2 + (6k+1)(12k+1)^2 + 2(12k+2)^2,  \label{12kminrelation3}
\end{align}
By Lemmas \ref{1remlemma} and \ref{lemmaminrelation1}, constructing points from the above equations and deleting their associated regions, the Ap{\'e}ry set is contained in the set of labels of the remaining figure.
Furthermore, we have the following relations that satisfy the conditions of Theorem \ref{theoremprocedure}:
\begin{align}
    (7k+1)(12k)^2 + (12k+1)^2 &= (3k-2)(12k+2)^2 + (4k+1)(12k+3)^2,\label{12k_eq1}\\
    (12k)^2 + 3(12k+2)^2 &= 3(12k+1)^2 + (12k+3)^2,\label{12k_eq2}\\
    (7k+2)(12k)^2 + (2k-1)(12k+3)^2 &= (6k+3)(12k+1)^2 + (3k-3)(12k+2)^2. \label{12k_eq3}
\end{align}

Subtracting equation \eqref{12k_eq2} from \eqref{12k_eq1}, we obtain additional $2k$ relations:
\begin{equation}\label{12k_2kequations_1}
\begin{aligned}    
(7k)(12k)^2 + 4(12k+1)^2 &= (3k+1)(12k+2)^2 + (4k)(12k+3)^2 ,\\
(7k-1)(12k)^2 + 7(12k+1)^2 &= (3k+4)(12k+2)^2 + (4k-1)(12k+3)^2 ,\\
 & \hspace{5.6pt}  \vdots      \\
(5k+2)(12k)^2 + (6k-2)(12k+1)^2 &= (9k-5)(12k+2)^2 + (2k+2)(12k+3)^2 ,\\
(5k+1)(12k)^2 + (6k+1)(12k+1)^2 &= (9k-2)(12k+2)^2 + (2k+1)(12k+3)^2 .
\end{aligned}
\end{equation}
We could subtract equation \eqref{12k_eq2} again from the above, and the resulting point will delete the region $\{y > 9k+1, z > 2k\}$. This region is already deleted by \eqref{12kminrelation2} (the region $\{y > 9k\}$ is already gone). 

Similarly, we can subtract equation \eqref{12k_eq2} from \eqref{12k_eq3} and obtain further $2k$ relations:
\begin{equation}\label{12k_2kequations_2}
\begin{aligned} 
(7k+1)(12k)^2 + (2k)(12k+3)^2 &= (6k)(12k+1)^2 + (3k)(12k+2)^2,\\
(7k)(12k)^2 + (2k+1)(12k+3)^2 &= (6k-3)(12k+1)^2 + (3k+3)(12k+2)^2,\\
(7k-1)(12k)^2 + (2k+2)(12k+3)^2 &= (6k-6)(12k+1)^2 + (3k+6)(12k+2)^2,\\
& \hspace{5.6pt} \vdots\\
(5k+3)(12k)^2 + (4k-2)(12k+3)^2 &= 6(12k+1)^2 + (9k-6)(12k+2)^2, \\
(5k+2)(12k)^2 + (4k-1)(12k+3)^2 &= 3(12k+1)^2 + (9k-3)(12k+2)^2. \\
\end{aligned}
\end{equation}
We do not need to subtract anymore because the resulting points will have $y \geq 9k$, which is already taken care of by \eqref{12kminrelation2}.

Now we will calculate the volume of the figure $T$, which is carved out using all the above equations. Firstly, consider the figure carved out only by \eqref{12kminrelation1}, \eqref{12kminrelation2}, \eqref{12kminrelation3} and \eqref{12k_eq2}. It will have the volume
\begin{equation*}\label{12kvol1}
    (6k+4) \cdot 9k   +  3 \cdot 9k \cdot 6k  =  216 k^2 + 36 k,
\end{equation*}
where the first summand is the volume of the $(6k+4) \times 9k \times 1$ bottom layer and the second summand is the rest, which will be a $3 \times 9k \times 6k$ box. Now, we will see how many cubes are deleted from the aforementioned figure, by equation \eqref{12k_eq3} and equations \eqref{12k_2kequations_2}. They will remove a section of stairs from the bottom layer (see Figure \ref{img1}). It has the volume
\begin{equation*}\label{12kvol2}
     \frac{(6k+3)(6k+2)}{2} = 18 k^2 + 15 k + 3.
\end{equation*}
Lastly, equations \eqref{12kminrelation0}, \eqref{12k_eq1}, and \eqref{12k_2kequations_1} will further remove the volume
\begin{equation*}\label{12kvol3}
     3 \cdot 2 \cdot 4k + 3  \cdot \frac{6k(6k-1)}{2} + 3 \cdot (2k-1) = 54 k^2 + 21 k - 3,
\end{equation*}
where the first summand is the $3 \times 2 \times 4k$ strip, removed by the region associated to $(-(6k+1),9k-2,2k+1)$, the second summand is the volume removed by the stairs with step $3 \times 1$ produced by \eqref{12k_eq1} and the rest of \eqref{12k_2kequations_1}, and the final part is lastly removed by \eqref{12kminrelation0}. The volume of $T$ comes down to
\begin{equation*}\label{12kvol4}
     216 k^2 + 36 k - (18 k^2 + 15 k + 3) - ( 54 k^2 + 21 k - 3) = 144k^2.
\end{equation*}
Because the volume of $T$ is exactly $(12k)^2$, it is an $L$-shape.

\begin{figure}[h]
    \caption{the figure $T$ for $n=12k$ and $k=3$}
    \centering
    \includegraphics[width = 0.5\textwidth]{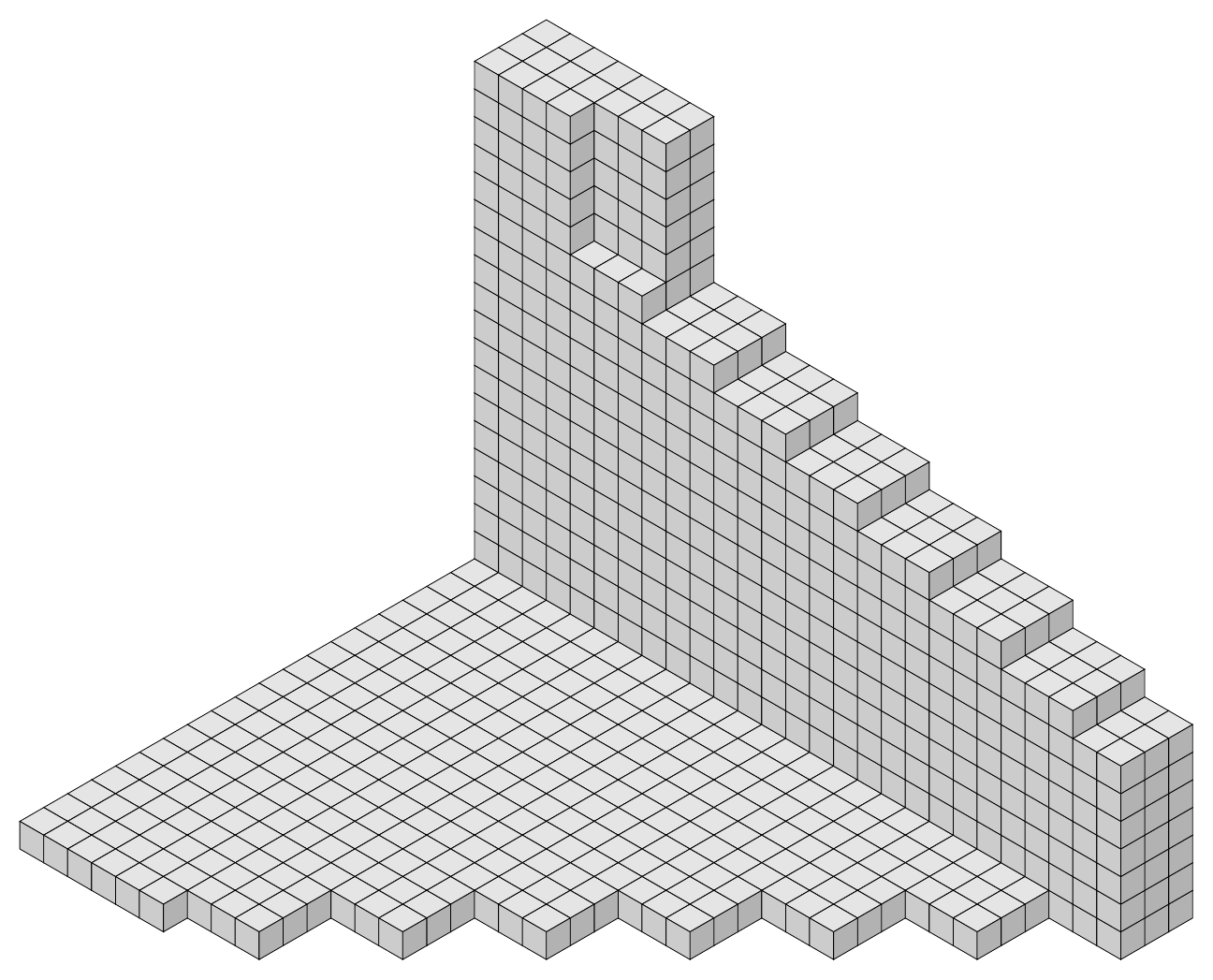}
    \label{img1}
\end{figure}

\subsubsection{Frobenius number}

$F(S)+(12k)^2$ is equal to the largest label in $T$. It clearly labels one of the corners of $T$ --- the cubes that have exactly three of their faces visible, when looking at the figure from the first octant.

First, the corners of the bottom stairs of $T$, from left to right, have labels:
\begin{align*}
(6k+3)(12k+1)^2 +(3k-4)(12k+2)^2 &= 1296 k^3 + 144 k^2 - 102 k - 13,\\
(6k+2)(12k+1)^2 +(3k-1)(12k+2)^2 &= 1296 k^3 + 432 k^2 + 18 k - 2,\\
(6k-1)(12k+1)^2 +(3k+2)(12k+2)^2 &= 1296 k^3 + 432 k^2 + 90 k + 7,\\
& \hspace{5.8pt} \vdots \\
8(12k+1)^2 +(9k-7)(12k+2)^2 &=1296 k^3 + 576 k^2 - 108 k - 20,\\
5(12k+1)^2 +(9k-4)(12k+2)^2 &= 1296 k^3 + 576 k^2 - 36 k - 11.
\end{align*}
The labels of the corners of the right stairs of $T$, listed from bottom to top,
are:
\begin{align*}
2(12k+1)^2 +(9k-1)(12k+2)^2 +(2k) (12k+3)^2 &=1584 k^3 + 720 k^2 + 54 k - 2,\\
2(12k+1)^2 +(9k-3)(12k+2)^2 +(2k+1) (12k+3)^2 &=1584 k^3 + 576 k^2 + 30 k - 1,\\
2(12k+1)^2 +(9k-6)(12k+2)^2 +(2k+2) (12k+3)^2 &=1584 k^3 + 288 k^2 - 42 k - 4, \\
& \hspace{5.8pt} \vdots \\
2(12k+1)^2 +(3k)(12k+2)^2 +(4k) (12k+3)^2 &=1008 k^3 + 720 k^2 + 96 k + 2,\\
2(12k+1)^2 +(3k-3)(12k+2)^2 +(4k+1) (12k+3)^2 &=1008 k^3 + 432 k^2 + 24 k - 1.
\end{align*}
Finally, the last two corners of the top layer of $T$ have labels:
\begin{align*}
2(12k+1)^2 +(3k-6)(12k+2)^2 +(6k) (12k+3)^2 &= 1296 k^3 - 174 k - 22,\\
(12k+1)^2 + (3k-3)(12k+2)^2 +(6k) (12k+3)^2 &=1296 k^3 + 288 k^2 - 54 k - 11.
\end{align*}
The value $1584 k^3 + 720 k^2 + 54 k - 2$ is the largest from above and is equal to $F(S)+(12k)^2$. This gives
$$ F(S) = 1584 k^3 + 576 k^2 + 54 k - 2.$$

\subsubsection{Genus}

Next, in order to find $G(S)$, we have to find the sum of the labels in $T$. To accomplish that, we will use the previously found corners.

The whole calculation is tedious, so we will split it into smaller parts. Specifically, as we will be adding numbers from $\langle (12k+1)^2,(12k+2)^2,(12k+3)^2 \rangle$ we can separately find how many times we will be adding each of the numbers $(12k+1)^2,(12k+2)^2,(12k+3)^2$ --- i.e. we will separately find the sum of $x$-, $y$- and $z$-coordinates of the cubes of $T$. Furthermore, we will look at $T$ in sections similarly as we did when computing its volume. 

First, we can use the corners from the bottom stairs of $T$, to find for each of them the sum of $x$-coordinates of cubes from the region $\{ a-2 \leq x \leq a +1, y \leq b+1 , 0 \leq z \leq 1  \}$, where $(a,b,0)$ are the coordinates of a corner cube from the bottom layer (with the exception of the leftmost corner). In this way, we get manageable sections, which we can calculate using the already found coordinates of the corners. Adding the sum of $x$-coordinates of cubes from the region $\{ 6k+3 \leq x \leq 6k+4, y \leq 3k-3 , 0 \leq z \leq 1\}$ we find that the sum of $x$-coordinates of cubes from the bottom layer of $T$, which have the upper face visible in Figure \ref{img1}, is equal to \footnotesize 
\begin{align*}
&(5+4+3)(9k-3) + (8+7+6)(9k-6) + \dots + ((6k+2) + (6k+1) + 6k)3k +  (6k+3)(3k-3) \\
&=  (6k+3)(3k-3) + \sum_{i=0}^{2k-1} (12+9i)(9k-3-3i)\\
&=(6k+3)(3k-3) + \sum_{i=0}^{2k-1} 108k-36 + i(81k-63) - 27i^2 \\
& = (6k+3)(3k-3) + 2k(108k-36) + (81k-63) \frac{(2k-1)2k}{2} -27 \frac{(2k-1)2k(4k-1)}{6} \\
& = 90 k^3 + 81 k^2 - 27 k - 9.
\end{align*} \small
Similarly, the sum of the $y$-coordinates of cubes from the bottom layer of $T$, which have the top face visible in Figure  \ref{img1}, is equal to \footnotesize 
\begin{align*}
& 3 \left( \frac{(9k-4)(9k-3)}{2} + \frac{(9k-7)(9k-6)}{2} + \dots + \frac{(3k-1)3k}{2} \right) + \frac{(3k-4)(3k-3)}{2}\\
& = \frac{(3k-4)(3k-3)}{2} + \frac{3}{2} \sum_{i=0}^{2k-1} (9k-4 -3i)(9k-3-3i) \\
& =  \frac{(3k-4)(3k-3)}{2} + \frac{3}{2} \sum_{i=0}^{2k-1} 81k^2-63k + 12 + i(21-54k) + 9i^2 \\
&=  \frac{(3k-4)(3k-3)}{2} + 3k( 81k^2-63k + 12) + \frac{3}{2} (21-54k) \frac{(2k-1)2k}{2} + \frac{27}{2}   \frac{(2k-1)2k(4k-1)}{6} \\
& = 117 k^3 - \frac{135 k^2}{2} - \frac{3 k}{2} + 6. 
\end{align*} \small
We can apply the same approach to the other parts of $T$. It easily follows that the sum of the $x$-coordinates of all the other cubes in $T$ is
$$ 36 k^2 + 4 k + 2(36 k^2 - 2 k + 3) = 108 k^2 + 6.$$
Considering, for each corner of the right stairs with coordinates $(2,b,c)$, the region $\{0 \leq x \leq 2, b-2 \leq y \leq b+1, z \leq c+1\}$ (with the exception of the rightmost corner), it follows that the sum of $y$-coordinates of all cubes of $T$ with $x \leq 2$ is \footnotesize 
\begin{align*}
&(6k+3)\frac{(9k-1)9k}{2}   + 3 \left( \frac{(9k-3)(9k-2)}{2}  +  \frac{(9k-6)(9k-5)}{2}  + \dots +   \frac{(3k-3)(3k-2)}{2} \right) \\
& + (6k-3) \frac{(3k-6)(3k-5)}{2}   + (4k-2)( (3k-3) +(3k-4) + (3k-5))\\
& = 270 k^3 + 18 k^2 + 60 k - 21 + 3\sum_{i=0}^{2k} \frac{(9k-3-3i)(9k-2-3i)}{2} \\
& = 270 k^3 + 18 k^2 + 60 k - 21 + \frac{3}{2}\sum_{i=0}^{2k} 81k^2 - 45k+6 + i(15-54k) + 9i^2  \\
&= 270 k^3 + 18 k^2 + 60 k - 21 
+ \frac{3}{2}\left(  (2k+1)(81k^2 - 45k+6) + (15-54k)\frac{2k(2k+1)}{2} + 9 \frac{2k(2k+1)(4k+1)}{6} \right) \\
& = 387 k^3 - \frac{9 k^2}{2} + \frac{75 k}{2} - 12.
\end{align*} \small
Repeating the same approach, we see that the sum of $z$-coordinates of all the cubes of $T$ with $x \leq 2$ is \footnotesize
\begin{align*}
& 6\frac{2k(2k+1)}{2} + 9 \left (\frac{(2k+1)(2k+2)}{2} + \frac{(2k+2)(2k+3)}{2} + \dots +  \frac{(4k+1)(4k+2)}{2} \right)\\
&  + 3(3k-5) \frac{6k(6k+1)}{2}  + 6 \left( \frac{6k(6k+1)}{2} - \frac{(4k+1)(4k+2)}{2}  \right)\\
& = 162 k^3 - 171 k^2 - 57 k - 6 + \frac{9}{2} \sum_{i=0}^{2k} (2k+1+i)(2k+2+i)  \\
& = 162 k^3 - 171 k^2 - 57 k - 6 + \frac{9}{2} \sum_{i=0}^{2k} 4k^2+6k+2 + i(4k+3) + i^2  \\
& = 162 k^3 - 171 k^2 - 57 k - 6 + \frac{9}{2} \left( ( 4k^2+6k+2)(2k+1) + (4k+3) \frac{2k(2k+1)}{2} +  \frac{2k(2k+1)(4k+1)}{6} \right)\\
& = 246k^3 - 45k^2 + 3k+3.
\end{align*} \small
In total, the sum of labels in $T$ comes down to
\begin{align*}
    &(90 k^3 + 189 k^2 - 27 k - 3) (12k+1)^2 + (504 k^3 - 72 k^2 + 36 k - 6)(12k+2)^2 \\
    &+ (246k^3 - 45k^2 + 3k+3) (12k+3)^2  = 120960 k^5 + 54432 k^4 + 3888 k^3 - 72 k^2.
\end{align*}
This gives
$$ G(S) = \frac{120960 k^5 + 54432 k^4 + 3888 k^3 - 72 k^2}{ (12k)^2}  - \frac{(12k)^2-1}{2} = 840 k^3 + 306 k^2 + 27 k .$$

\subsubsection{Catenary degree}
Using Proposition \ref{propminimalrelations} and Remark \ref{remark_a00_procedure}, we find that equations \eqref{12kminrelation0}-\eqref{12kminrelation3} are in fact the minimal relations.

The common value of equation \eqref{12k_eq3} is not a Betti element, by Lemma \ref{lemma_catenarycase1}. 
By Lemmas \ref{lemma_catenary2} and \ref{lemma_catenary1}, the common values of each of equations \eqref{12kminrelation0}-\eqref{12k_eq2}, \eqref{12k_2kequations_1}, \eqref{12k_2kequations_2} are Betti elements. By Theorem \ref{theorembettifind}, they are all the Betti elements, and there are $4k+6$ of them. 

Moreover, Lemma \ref{lemma_catenary1} gives us all the factorizations of the common values of equations \eqref{12k_eq1}, \eqref{12k_eq2}, \eqref{12k_2kequations_1}, \eqref{12k_2kequations_2}. Thus, one can check that the largest catenary degree among them is $\textup{c}( (5k+1)(12k)^2 + (6k+1)(12k+1)^2) = 11k+2$. Furthermore, we have
$$   (6k+4)(12k+1)^2, \ 9k(12k+2)^2, \ (6k+1)(12k+3)^2  < 11k(12k)^2, $$
hence $\textup{c}( (7k+2)(12k)^2), \textup{c}((6k+4)(12k+1)^2), \textup{c}(9k(12k+2)^2),\textup{c}((6k+1)(12k+3)^2) < 11k+2$, which gives
$$ \textup{c}(S) = 11k+2 .$$

Lastly, to obtain a minimal presentation, we apply Theorem \ref{theorem_minimalpresentation}. We know the sets of factorizations of all the Betti elements (hence, we can construct the corresponding sets $\rho_b$), except for the common values of the minimal relations. However, we may take $\rho_{(7k+2)(12k)^2} = ( (7k+2,0,0,0) ,(0,2,3k-5,4k+2))$ and $\rho_{(6k+4)(12k+1)^2} = ( (0,6k+4,0,0) ,(1,0,1,6k))$ whose corresponding graphs are clearly connected. Moreover, we can see that $9k(12k+2)^2 \notin \langle (12k+1)^2 \rangle$ and $(6k+1)(12k+3)^2 \notin \langle (12k)^2 \rangle $, hence we may take $\rho_{9k(12k+2)^2} = ( (0,0,9k,0) ,(5k+1,0,0,4k))$ and $\rho_{(6k+1)(12k+3)^2} = ( (0,0,0,6k+1) ,(0,6k+1,2,0))$ whose corresponding graphs are also connected. Hence, we can obtain a minimal presentation, whose cardinality is $4k+6$.

\subsection{$n = 12k + 4, \  k >0$}
\subsubsection{Ap{\'e}ry set}
We have the following:
\begin{align}
    (7k+4)(12k+4)^2 &= 1(12k+5)^2 + (3k-3)(12k+6)^2 + (4k+3)(12k+7)^2, \label{12k+4minrelation0} \\
    (6k+6)(12k+5)^2 &= 1(12k+4)^2 + 1(12k+6)^2 + (6k+2)(12k+7)^2, \label{12k+4minrelation1} \\
    (9k+5)(12k+6)^2 &= (5k+2)(12k+4)^2 + 2(12k+5)^2 + (4k+2)(12k+7)^2, \label{12k+4minrelation2}\\
    (6k+3)(12k+7)^2 &= 0(12k+4)^2 + (6k+3)(12k+5)^2 + 2(12k+6)^2,  \label{12k+4minrelation3}
\end{align}
and by Lemmas \ref{1remlemma} and \ref{lemmaminrelation1}, constructing points from the above equations and deleting their associated regions, the Ap{\'e}ry set is contained in the set of labels of the remaining figure.
Furthermore, we have the following relations that satisfy the conditions of Theorem \ref{theoremprocedure}:
\begin{align}
    (7k+3)(12k+4)^2 + 2(12k+5)^2 &= (3k)(12k+6)^2 + (4k+2)(12k+7)^2 , \label{12k+4_eq1} \\
    (12k+4)^2 + 3(12k+6)^2 &= 3(12k+5)^2 + (12k+7)^2,\label{12k+4_eq2} \\
    (7k+4)(12k+4)^2 + (2k)(12k+7)^2 &= (6k+4)(12k+5)^2 + (3k-1)(12k+6)^2 .\label{12k+4_eq3}
\end{align}
Subtracting equation \eqref{12k+4_eq2} from \eqref{12k+4_eq1}, we obtain additional $2k+1$ relations:
\begin{equation}\label{12k+4_2k+1equations_1}
\begin{aligned}    
(7k+2)(12k+4)^2 + 5(12k+5)^2 &= (3k+3)(12k+6)^2 + (4k+1)(12k+7)^2,\\
(7k+1)(12k+4)^2 + 8(12k+5)^2 &= (3k+6)(12k+6)^2 + (4k)(12k+7)^2,\\
 & \hspace{5.6pt}  \vdots      \\
(5k+3)(12k+4)^2 + (6k+2)(12k+5)^2 &= (9k)(12k+6)^2 + (2k+2)(12k+7)^2, \\
(5k+2)(12k+4)^2 + (6k+5)(12k+5)^2 &= (9k+3)(12k+6)^2 + (2k+1)(12k+7)^2.
\end{aligned}
\end{equation}
Similarly, we can subtract equation \eqref{12k+4_eq2} from \eqref{12k+4_eq3} and obtain further $2k+1$ relations:
\begin{equation}\label{12k+4_2k+1equations_2}
\begin{aligned}    
(7k+3)(12k+4)^2 + (2k+1)(12k+7)^2 &= (6k+1)(12k+5)^2 + (3k+2)(12k+6)^2,\\
(7k+2)(12k+4)^2 + (2k+2)(12k+7)^2 &= (6k-2)(12k+5)^2 + (3k+5)(12k+6)^2,\\
 & \hspace{5.6pt}  \vdots      \\
(5k+4)(12k+4)^2 + (4k)(12k+7)^2 &= 4(12k+5)^2 + (9k-1)(12k+6)^2, \\
(5k+3)(12k+4)^2 + (4k+1)(12k+7)^2 &= (12k+5)^2 + (9k+2)(12k+6)^2.
\end{aligned}
\end{equation}

Analogously as before, one can check that the figure $T$, carved out using all the above equations, has the volume $(12k+4)^2$, which implies that it is an $L$-shape.

\begin{figure}[h]\label{figure12k+4}
    \caption{the figure $T$ for $n=12k+4$ and $k=2$}
    \centering
     \includegraphics[width = 0.5\textwidth]{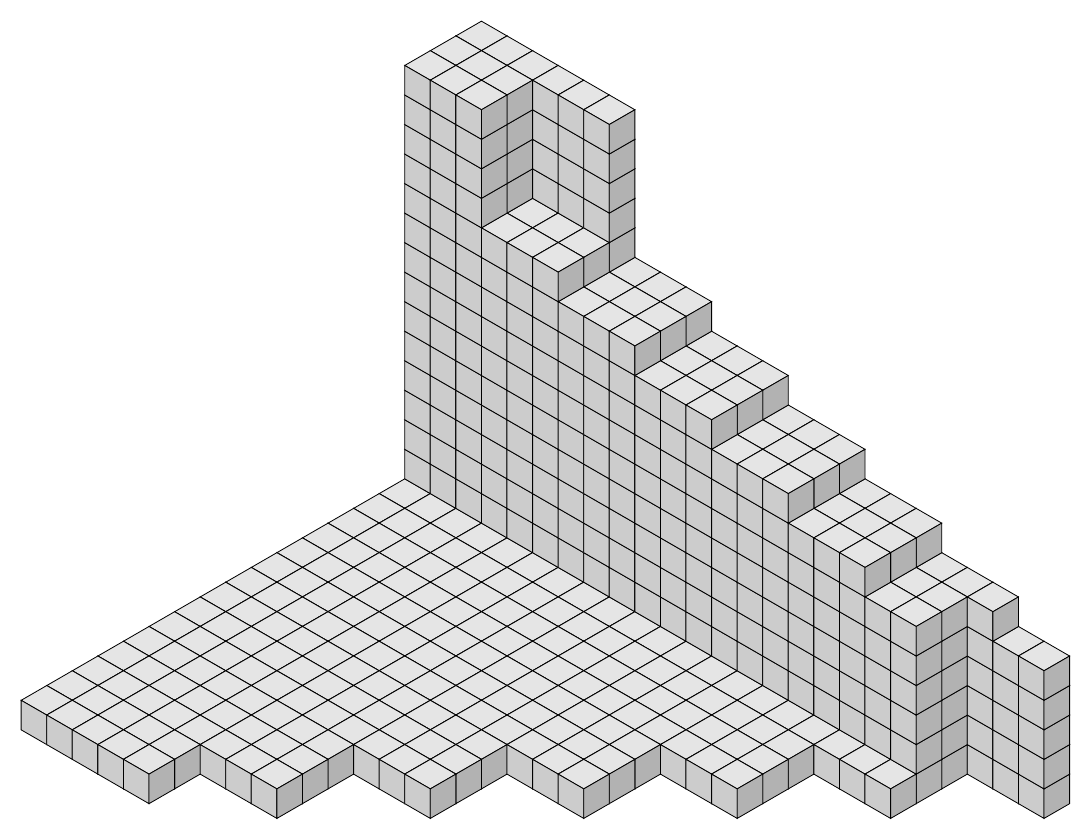}
    \label{img2}
\end{figure}

\subsubsection{Frobenius number and genus}

The largest label in $T$ is
$$(9k+4)(12k+6)^2 + (2k)(12k+7)^2 =1584 k^3 + 2208 k^2 + 998 k + 144,$$ 
which gives
$$ F(S) = 1584 k^3 + 2064 k^2 + 902 k + 128.$$
One can find that the sum of the labels of $T$ is 
$$ 120960 k^5 + 256032 k^4 + 210576 k^3 + 84552 k^2 + 16640 k + 1288,$$
which gives
$$ G(S)  = 840 k^3 + 1146 k^2 + 509 k + 73.$$
\subsubsection{Catenary degree}
Using Proposition \ref{propminimalrelations} and Remark \ref{remark_a00_procedure}, we find that equations \eqref{12k+4minrelation0}-\eqref{12k+4minrelation3} are in fact the minimal relations.

The common value of equation \eqref{12k+4_eq3} is not a Betti element, by Lemma \ref{lemma_catenarycase1}. 
By Lemmas \ref{lemma_catenary2} and \ref{lemma_catenary1}, the common values of each of equations \eqref{12k+4minrelation0}-\eqref{12k+4_eq2}, \eqref{12k+4_2k+1equations_1}, \eqref{12k+4_2k+1equations_2} are Betti elements. 
By Theorem \ref{theorembettifind}, they are all the Betti elements, and there are $4k+8$ of them.

Similarly as before, using Lemma \ref{lemma_catenary1}, one can check that the largest catenary degree among the common values of equations \eqref{12k+4_eq1}, \eqref{12k+4_eq2}, \eqref{12k+4_2k+1equations_1}, \eqref{12k+4_2k+1equations_2} is $\textup{c}((5k+2)(12k+4)^2 + (6k+5)(12k+5)^2) = 11k+7$ and that the catenary degrees of the common values of equations \eqref{12k+4minrelation0}-\eqref{12k+4minrelation3} are smaller than that, which gives
$$ \textup{c}(S) = 11k+7.$$

Lastly, we apply Theorem \ref{theorem_minimalpresentation} and construct a minimal presentation, analogously as before. The cardinality of the minimal presentation is $4k+8$.  

\subsection{$n = 12k +8, \ k>0$}
\subsubsection{Ap{\'e}ry set}
We have the following:
\begin{align}
    (7k+6)(12k+8)^2 &= 0(12k+9)^2 + (3k-1)(12k+10)^2 + (4k+4)(12k+11)^2, \label{12k+8minrelation0} \\
    (6k+8)(12k+9)^2 &= 1(12k+8)^2 + 1(12k+10)^2 + (6k+4)(12k+11)^2, \label{12k+8minrelation1} \\
    (9k+7)(12k+10)^2 &= (5k+4)(12k+8)^2 + 1(12k+9)^2 + (4k+3)(12k+11)^2, \label{12k+8minrelation2}\\
    (6k+5)(12k+11)^2 &= 0(12k+8)^2 + (6k+5)(12k+9)^2 + 2(12k+10)^2,  \label{12k+8minrelation3}
\end{align}
and by Lemmas \ref{1remlemma} and \ref{lemmaminrelation1}, constructing points from the above equations and deleting their associated regions, the Ap{\'e}ry set is contained in the set of labels of the remaining figure.
Furthermore, we have the following relations that satisfy the conditions of Theorem \ref{theoremprocedure}:
\begin{align}
(7k+5)(12k+8)^2 + 3(12k+9)^2  &= (3k+2)(12k+10)^2 + (4k+3)(12k+11)^2, \label{12k+8_eq1} \\
(12k+8)^2 + 3(12k+10)^2 &= 3(12k+9)^2 + (12k+11)^2,\label{12k+8_eq2} \\
(7k+6)(12k+8)^2 + (2k+1)(12k+11)^2 &= (6k+5)(12k+9)^2 + (3k+1)(12k+10)^2. \label{12k+8_eq3}  
\end{align}
Subtracting equation \eqref{12k+8_eq2} from \eqref{12k+8_eq1}, we obtain additional $2k+1$ relations:
\begin{equation}\label{12k+8_2k+1equations_1}
\begin{aligned}    
(7k+4)(12k+8)^2 + 6(12k+9)^2  &= (3k+5)(12k+10)^2 + (4k+2)(12k+11)^2 ,\\
(7k+3)(12k+8)^2 + 9(12k+9)^2  &= (3k+8)(12k+10)^2 + (4k+1)(12k+11)^2 ,\\
 & \hspace{5.6pt}  \vdots      \\
(5k+5)(12k+8)^2 + (6k+3)(12k+9)^2  &= (9k+2)(12k+10)^2 + (2k+3)(12k+11)^2 ,\\
(5k+4)(12k+8)^2 + (6k+6)(12k+9)^2  &= (9k+5)(12k+10)^2 + (2k+2)(12k+11)^2 .
\end{aligned}
\end{equation}
Similarly, we can subtract equation \eqref{12k+8_eq2} from \eqref{12k+8_eq3} and obtain further $2k+1$ relations:
\begin{equation}\label{12k+8_2k+1equations_2}
\begin{aligned}    
(7k+5)(12k+8)^2 + (2k+2)(12k+11)^2 &= (6k+2)(12k+9)^2 + (3k+4)(12k+10)^2 ,\\
(7k+4)(12k+8)^2 + (2k+3)(12k+11)^2 &= (6k-1)(12k+9)^2 + (3k+7)(12k+10)^2 ,\\
 & \hspace{5.6pt}  \vdots      \\
(5k+6)(12k+8)^2 + (4k+1)(12k+11)^2 &= 5(12k+9)^2 + (9k+1)(12k+10)^2 .\\
(5k+5)(12k+8)^2 + (4k+2)(12k+11)^2 &= 2(12k+9)^2 + (9k+4)(12k+10)^2 .
\end{aligned}
\end{equation}

Analogously as before, one can check that the figure $T$, carved out using all the above equations, has the volume $(12k+8)^2$, which implies that it is an $L$-shape.

\begin{figure}[h]
    \caption{the figure $T$ for $n=12k+8$ and $k=2$}
    \centering
    \includegraphics[width = 0.5\textwidth]{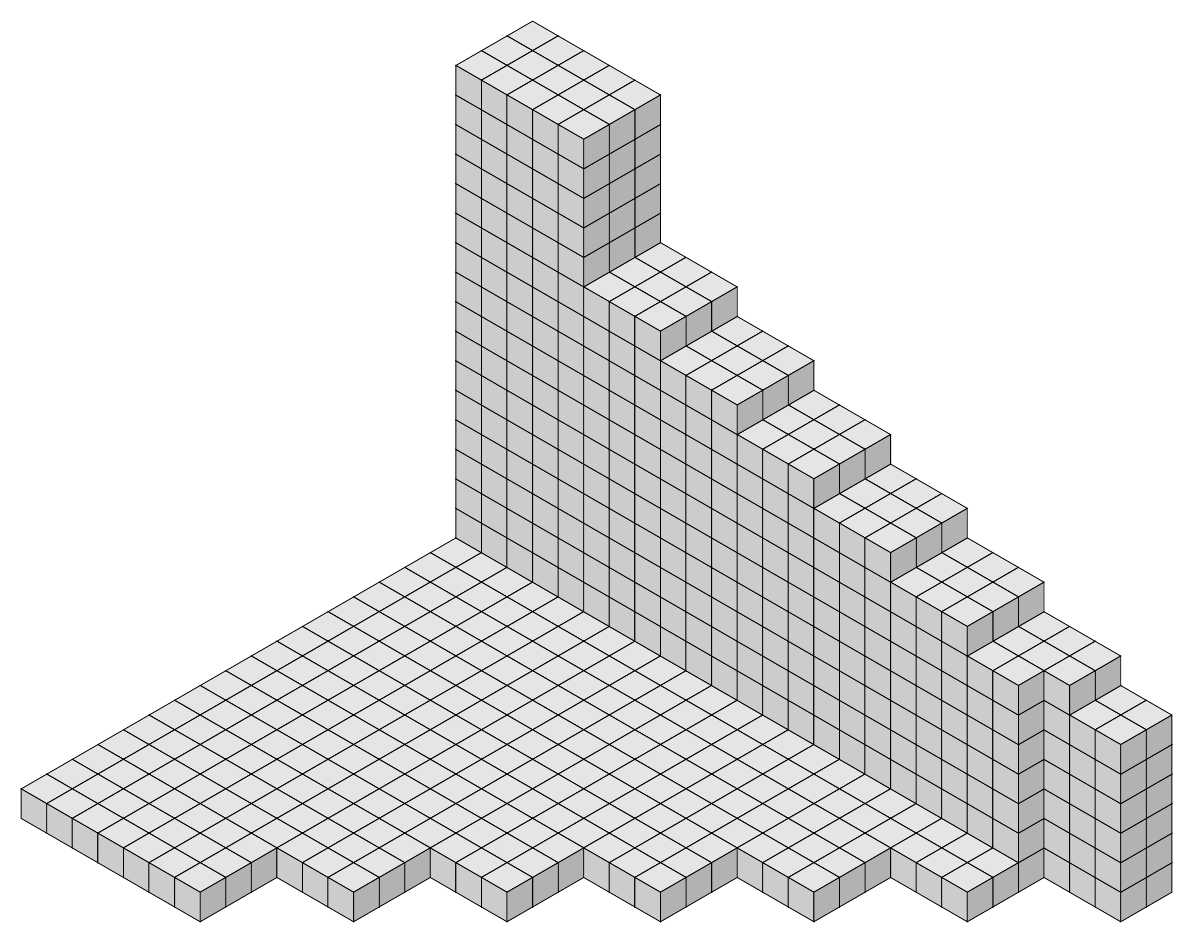}
    \label{img3}
\end{figure}

\subsubsection{Frobenius number and genus}

The largest label in $T$ is
$$ (12k+9)^2 + (9k+6)(12k+10)^2 + (2k+1)(12k+11)^2 =1584 k^3 + 3840 k^2 + 3062 k + 802,$$
which gives
$$F(S) = 1584 k^3 + 3696 k^2 + 2870 k + 738.$$
One can find that the sum of the labels of $T$ is 
$$ 120960 k^5 + 457632 k^4 + 686640 k^3 + 511128 k^2 + 188896 k + 27744,$$
which gives
$$ G(S)  = 840 k^3 + 1986 k^2 + 1555 k + 402.$$
\subsubsection{Catenary degree}
Using Proposition \ref{propminimalrelations} and Remark \ref{remark_a00_procedure}, we find that equations \eqref{12k+8minrelation0}-\eqref{12k+8minrelation3} are in fact the minimal relations.

The common value of equation \eqref{12k+8_eq3} is not a Betti element, by Lemma \ref{lemma_catenarycase1}. 
By Lemmas \ref{lemma_catenary2} and \ref{lemma_catenary1}, the common values of each of equations \eqref{12k+8minrelation0}-\eqref{12k+8_eq2}, \eqref{12k+8_2k+1equations_1}, \eqref{12k+8_2k+1equations_2} are Betti elements. 
By Theorem \ref{theorembettifind}, they are all the Betti elements, and there are $4k+8$ of them.

Similarly as before, using Lemma \ref{lemma_catenary1}, one can check that the largest catenary degree among the common values of equations \eqref{12k+8_eq1}, \eqref{12k+8_eq2}, \eqref{12k+8_2k+1equations_1}, \eqref{12k+8_2k+1equations_2} is $\textup{c}((5k+4)(12k+8)^2 + (6k+6)(12k+9)^2 ) = 11k+10$ and that the catenary degrees of the common values of equations \eqref{12k+8minrelation0}-\eqref{12k+8minrelation3} are smaller than that, which gives
$$ \textup{c}(S) = 11k+10.$$

Lastly, we apply Theorem \ref{theorem_minimalpresentation} and construct a minimal presentation, analogously as before. The cardinality of the minimal presentation is $4k+8$.

\subsection{$n = 12k + 1, \ k>1$}
\subsubsection{Ap{\'e}ry set}
We have the following:
\begin{align}
    (6k+2)(12k+1)^2 &= 1(12k+2)^2 + (6k-2)(12k+3)^2 + 1(12k+4)^2, \label{12k+1minrelation0} \\
    (9k+5)(12k+2)^2 &= (4k+2)(12k+1)^2 + 2(12k+3)^2 + 5k(12k+4)^2, \label{12k+1minrelation1} \\
    (6k+1)(12k+3)^2 &= (6k+1)(12k+1)^2 + 2(12k+2)^2 + 0(12k+4)^2, \label{12k+1minrelation2}\\
    (7k+1)(12k+4)^2 &= 4k(12k+1)^2 + (3k+4)(12k+2)^2 + 0(12k+3)^2,  \label{12k+1minrelation3}
\end{align}
and by Lemmas \ref{1remlemma} and \ref{lemmaminrelation1}, constructing points from the above equations and deleting their associated regions, the Ap{\'e}ry set is contained in the set of labels of the remaining figure.
Furthermore, we have the following relations that satisfy the conditions of Theorem \ref{theoremprocedure}:
\begin{align}
  (4k-1)(12k+1)^2 + (3k+7)(12k+2)^2  &= 3(12k+3)^2 + (7k)(12k+4)^2, \label{12k+1_eq1} \\
    (12k+1)^2 + 3(12k+3)^2 &= 3(12k+2)^2 + (12k+4)^2, \label{12k+1_eq2} \\
    (4k+1)(12k+1)^2 + (5k+1)(12k+4)^2  &= (9k+2)(12k+2)^2 + (12k+3)^2. \label{12k+1_eq3}
\end{align}
Subtracting equation \eqref{12k+1_eq2} from \eqref{12k+1_eq1}, we obtain additional $2k-1$ relations:
\begin{equation}\label{12k+1_2k-1equations_1}
\begin{aligned}    
(4k-2)(12k+1)^2 + (3k+10)(12k+2)^2  &= 6(12k+3)^2 + (7k-1)(12k+4)^2,\\
(4k-3)(12k+1)^2 + (3k+13)(12k+2)^2  &= 9(12k+3)^2 + (7k-2)(12k+4)^2,\\
 & \hspace{5.6pt}  \vdots      \\
(2k+1)(12k+1)^2 + (9k+1)(12k+2)^2  &= (6k-3)(12k+3)^2 + (5k+2)(12k+4)^2,\\
(2k)(12k+1)^2 + (9k+4)(12k+2)^2  &= (6k)(12k+3)^2 + (5k+1)(12k+4)^2.
\end{aligned}
\end{equation}
Similarly, we can subtract equation \eqref{12k+1_eq2} from \eqref{12k+1_eq3} and obtain further $2k-1$ relations:
\begin{equation}\label{12k+1_2k-1equations_2}
\begin{aligned}    
 (4k)(12k+1)^2 + (5k+2)(12k+4)^2  &= (9k-1)(12k+2)^2 + 4(12k+3)^2,\\
(4k-1)(12k+1)^2 + (5k+3)(12k+4)^2  &= (9k-4)(12k+2)^2 + 7(12k+3)^2,\\
 & \hspace{5.6pt}  \vdots      \\
(2k+3)(12k+1)^2 + (7k-1)(12k+4)^2  &= (3k+8)(12k+2)^2 + (6k-5)(12k+3)^2,\\
(2k+2)(12k+1)^2 + (7k)(12k+4)^2  &= (3k+5)(12k+2)^2 + (6k-2)(12k+3)^2,
\end{aligned}
\end{equation}

Analogously as before, one can check that the figure $T$, carved out using all the above equations, has the volume $(12k+1)^2$, which implies that it is an $L$-shape.

\begin{figure}[h]
    \caption{the figure $T$ for $n=12k+1$ and $k=2$}
    \centering
    \includegraphics[width = 0.5\textwidth]{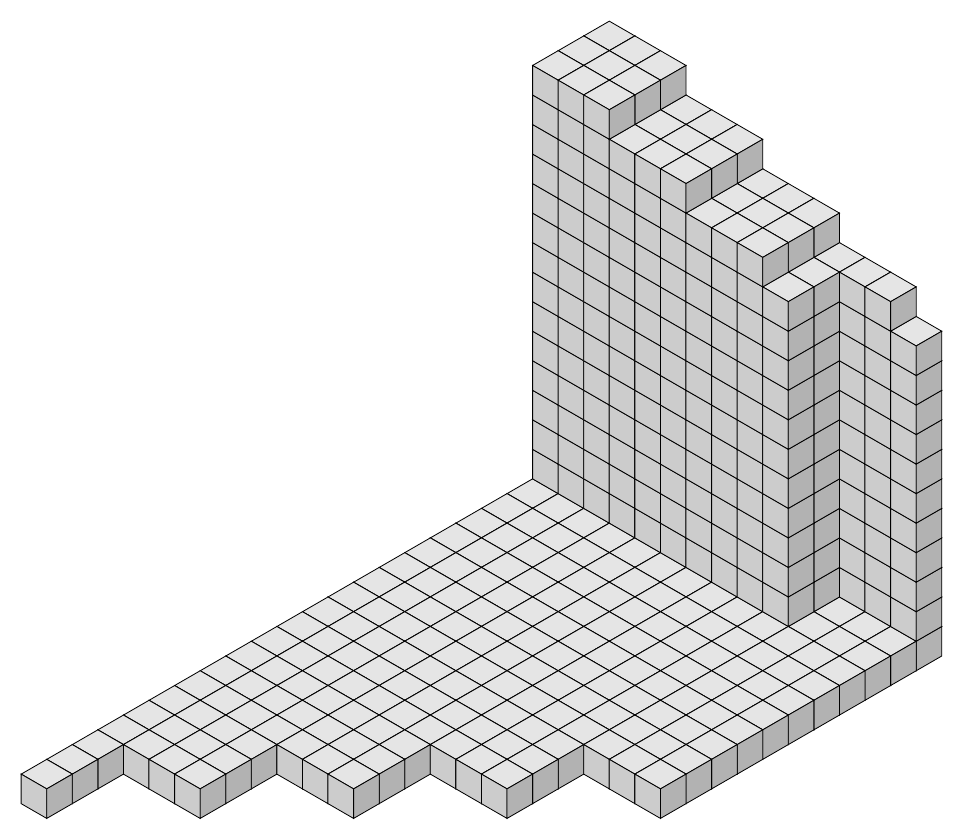}
    \label{img4}
\end{figure}

\subsubsection{Frobenius number and genus}

The largest label in $T$ is
$$ (6k-1)(12k+3)^2 + (5k+1)(12k+4)^2 =1584 k^3 + 912 k^2 + 158 k + 7,$$
which gives
$$F(S) = 1584 k^3 + 768 k^2 + 134 k + 6.$$
One can find that the sum of the labels of $T$ is 
$$ 120960 k^5 + 107424 k^4 + 31944 k^3 + 4230 k^2 + 259 k + 6,$$
which gives
$$ G(S)  = 840 k^3 + 534 k^2 + 103 k + 6.$$

\subsubsection{Catenary degree}
Using Proposition \ref{propminimalrelations} and Remark \ref{remark_a00_procedure}, we find that equations \eqref{12k+1minrelation0}-\eqref{12k+1minrelation3} are in fact the minimal relations.

By Lemmas \ref{lemma_catenary2} and \ref{lemma_catenary1}, the common values of each of equations \eqref{12k+1minrelation0}-\eqref{12k+1_2k-1equations_2} are Betti elements. 
By Theorem \ref{theorembettifind}, they are all the Betti elements, and there are $4k+5$ of them.

Similarly as before, using Lemma \ref{lemma_catenary1}, one can check that the largest catenary degree among the common values of equations \eqref{12k+1_eq1}-\eqref{12k+1_2k-1equations_2} is $\textup{c}((2k)(12k+1)^2 + (9k+4)(12k+2)^2 ) = 11k+4$ and that the catenary degrees of the common values of equations \eqref{12k+1minrelation0}-\eqref{12k+1minrelation3} are smaller than that, which gives
$$ \textup{c}(S) = 11k+4.$$

Lastly, we apply Theorem \ref{theorem_minimalpresentation} and construct a minimal presentation, analogously as before. The cardinality of the minimal presentation is $4k+5$.

\subsection{$n = 12k + 5, \ k>0$}
\subsubsection{Ap{\'e}ry set}
We have the following:
\begin{align}
    (6k+4)(12k+5)^2 &= 1(12k+6)^2 + (6k)(12k+7)^2 + 1(12k+8)^2, \label{12k+5minrelation0} \\
    (9k+7)(12k+6)^2 &= (4k+3)(12k+5)^2 + 1(12k+7)^2 + (5k+2)(12k+8)^2, \label{12k+5minrelation1} \\
    (6k+3)(12k+7)^2 &= (6k+3)(12k+5)^2 + 2(12k+6)^2 + 0(12k+8)^2, \label{12k+5minrelation2}\\
    (7k+4)(12k+8)^2 &= (4k+2)(12k+5)^2 + (3k+3)(12k+6)^2 + 2(12k+7)^2,  \label{12k+5minrelation3}
\end{align}
and by Lemmas \ref{1remlemma} and \ref{lemmaminrelation1}, constructing points from the above equations and deleting their associated regions, the Ap{\'e}ry set is contained in the set of labels of the remaining figure.
Furthermore, we have the following relations that satisfy the conditions of Theorem \ref{theoremprocedure}:
\begin{align}
  (4k+1)(12k+5)^2 + (3k+6)(12k+6)^2  &= (12k+7)^2 + (7k+3)(12k+8)^2, \label{12k+5_eq1}\\
    (12k+5)^2 + 3(12k+7)^2 &= 3(12k+6)^2 + (12k+8)^2,\label{12k+5_eq2}\\
    (4k+2)(12k+5)^2 + (5k+3)(12k+8)^2  &= (9k+4)(12k+6)^2 + 2(12k+7)^2.\label{12k+5_eq3}
\end{align}
Subtracting equation \eqref{12k+5_eq2} from \eqref{12k+5_eq1}, we obtain additional $2k$ relations:
\begin{equation}\label{12k+5_2kequations_1}
\begin{aligned}    
(4k)(12k+5)^2 + (3k+9)(12k+6)^2  &= 4(12k+7)^2 + (7k+2)(12k+8)^2,\\
(4k-1)(12k+5)^2 + (3k+12)(12k+6)^2  &= 7(12k+7)^2 + (7k+1)(12k+8)^2,\\
 & \hspace{5.6pt}  \vdots      \\
(2k+2)(12k+5)^2 + (9k+3)(12k+6)^2  &= (6k-2)(12k+7)^2 + (5k+4)(12k+8)^2,\\
(2k+1)(12k+5)^2 + (9k+6)(12k+6)^2  &= (6k+1)(12k+7)^2 + (5k+3)(12k+8)^2.
\end{aligned}
\end{equation}
Similarly, we can subtract equation \eqref{12k+5_eq2} from \eqref{12k+5_eq3} and obtain further $2k$ relations:
\begin{equation}\label{12k+5_2kequations_2}
\begin{aligned}    
(4k+1)(12k+5)^2 + (5k+4)(12k+8)^2  = (9k+1)(12k+6)^2 + 5(12k+7)^2,\\
(4k)(12k+5)^2 + (5k+5)(12k+8)^2  = (9k-2)(12k+6)^2 + 8(12k+7)^2,\\
 & \hspace{5.6pt}  \vdots      \\
(2k+3)(12k+5)^2 + (7k+2)(12k+8)^2  = (3k+7)(12k+6)^2 + (6k-1)(12k+7)^2,\\
(2k+2)(12k+5)^2 + (7k+3)(12k+8)^2  = (3k+4)(12k+6)^2 + (6k+2)(12k+7)^2.
\end{aligned}
\end{equation}

Analogously as before, one can check that the figure $T$, carved out using all the above equations, has the volume $(12k+5)^2$, which implies that it is an $L$-shape.

\begin{figure}[h]
    \caption{the figure $T$ for $n=12k+5$ and $k=2$}
    \centering
    \includegraphics[width = 0.5\textwidth]{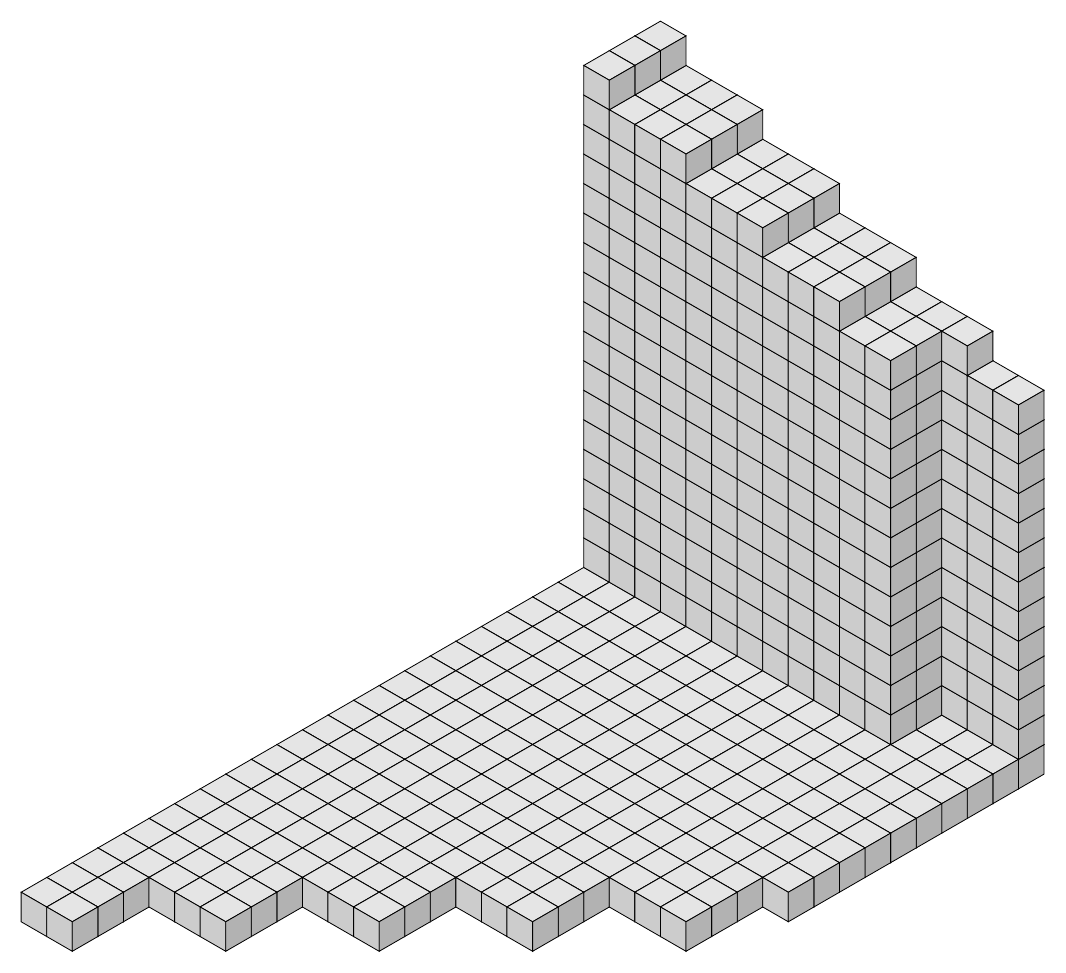}
    \label{img5}
\end{figure}

\subsubsection{Frobenius number and genus}

The largest label in $T$ is 
$$ (6k+2)(12k+7)^2 + (5k+2)(12k+8)^2 =1584 k^3 + 2544 k^2 + 1334 k + 226,$$
which gives
$$F(S) = 1584 k^3 + 2400 k^2 + 1214 k + 201.$$
One can find that the sum of the labels of $T$ is 
$$ 120960 k^5 + 309024 k^4 + 309288 k^3 + 152094 k^2 + 36845 k + 3525,$$
which gives
$$ G(S)  = 840 k^3 + 1374 k^2 + 737 k + 129.$$

\subsubsection{Catenary degree}
Using Proposition \ref{propminimalrelations} and Remark \ref{remark_a00_procedure}, we find that equations \eqref{12k+5minrelation0}-\eqref{12k+5minrelation3} are in fact the minimal relations.

By Lemmas \ref{lemma_catenary2} and \ref{lemma_catenary1}, the common values of each of equations \eqref{12k+5minrelation0}-\eqref{12k+5_2kequations_2} are Betti elements. 
By Theorem \ref{theorembettifind}, they are all the Betti elements, and there are $4k+7$ of them.

Similarly as before, using Lemma \ref{lemma_catenary1}, one can check that the largest catenary degree among the common values of equations \eqref{12k+5_eq1}-\eqref{12k+5_2kequations_2} is $\textup{c}((2k+1)(12k+5)^2 + (9k+6)(12k+6)^2 ) = 11k+7$ and that the catenary degrees of the common values of equations \eqref{12k+5minrelation0}-\eqref{12k+5minrelation3} are smaller than that, which gives
$$ \textup{c}(S) = 11k+7.$$

Lastly, we apply Theorem \ref{theorem_minimalpresentation} and construct a minimal presentation, analogously as before. The cardinality of the minimal presentation is $4k+7$.

\subsection{$n = 12k +9, \ k \geq 0$}
\subsubsection{Ap{\'e}ry set}
We have the following:
\begin{align}
    (6k+6)(12k+9)^2 &= 1(12k+10)^2 + (6k+2)(12k+11)^2 + 1(12k+12)^2, \label{12k+9minrelation0} \\
    (9k+9)(12k+10)^2 &= (4k+4)(12k+9)^2 + 0(12k+11)^2 + (5k+4)(12k+12)^2, \label{12k+9minrelation1} \\
    (6k+5)(12k+11)^2 &= (6k+5)(12k+9)^2 + 2(12k+10)^2 + 0(12k+12)^2, \label{12k+9minrelation2}\\
    (7k+6)(12k+12)^2 &= (4k+3)(12k+9)^2 + (3k+5)(12k+10)^2 + 1(12k+11)^2,  \label{12k+9minrelation3}
\end{align}
and by Lemmas \ref{1remlemma} and \ref{lemmaminrelation1}, constructing points from the above equations and deleting their associated regions, the Ap{\'e}ry set is contained in the set of labels of the remaining figure.
Furthermore, we have the following relations that satisfy the conditions of Theorem \ref{theoremprocedure}:
\begin{align}
  (4k+2)(12k+9)^2 + (3k+8)(12k+10)^2  &= 2(12k+11)^2 + (7k+5)(12k+12)^2,\label{12k+9_eq1}\\
    (12k+9)^2 + 3(12k+11)^2 &= 3(12k+10)^2 + (12k+12)^2,\label{12k+9_eq2}\\
    (4k+3)(12k+9)^2 + (5k+5)(12k+12)^2  &= (9k+6)(12k+10)^2 + 3(12k+11)^2.\label{12k+9_eq3}
\end{align}
Subtracting equation \eqref{12k+9_eq2} from \eqref{12k+9_eq1}, we obtain additional $2k$ relations:
\begin{equation}\label{12k+9_2kequations_1}
\begin{aligned}    
(4k+1)(12k+9)^2 + (3k+11)(12k+10)^2  &= 5(12k+11)^2 + (7k+4)(12k+12)^2,\\
(4k)(12k+9)^2 + (3k+14)(12k+10)^2  &= 8(12k+11)^2 + (7k+3)(12k+12)^2,\\
 & \hspace{5.6pt}  \vdots      \\
(2k+3)(12k+9)^2 + (9k+5)(12k+10)^2  &= (6k-1)(12k+11)^2 + (5k+6)(12k+12)^2,\\
(2k+2)(12k+9)^2 + (9k+8)(12k+10)^2  &= (6k+2)(12k+11)^2 + (5k+5)(12k+12)^2.
\end{aligned}
\end{equation}
Similarly, we can subtract equation \eqref{12k+9_eq2} from \eqref{12k+9_eq3} and obtain further $2k$ relations:
\begin{equation}\label{12k+9_2kequations_2}
\begin{aligned}    
 (4k+2)(12k+9)^2 + (5k+6)(12k+12)^2  &= (9k+3)(12k+10)^2 + 6(12k+11)^2,\\
(4k+1)(12k+9)^2 + (5k+7)(12k+12)^2  &= (9k)(12k+10)^2 + 9(12k+11)^2,\\
 & \hspace{5.6pt}  \vdots      \\
(2k+4)(12k+9)^2 + (7k+4)(12k+12)^2  &= (3k+9)(12k+10)^2 + (6k)(12k+11)^2,\\
(2k+3)(12k+9)^2 + (7k+5)(12k+12)^2  &= (3k+6)(12k+10)^2 + (6k+3)(12k+11)^2.
\end{aligned}
\end{equation}

Now analogously as before, one can check that the figure $T$, carved out using all the above equations, has the volume $(12k+9)^2$, which implies that it is an $L$-shape.

\begin{figure}[h]
    \caption{the figure $T$ for $n=12k+9$ and $k=2$}
    \centering
    \includegraphics[width = 0.5\textwidth]{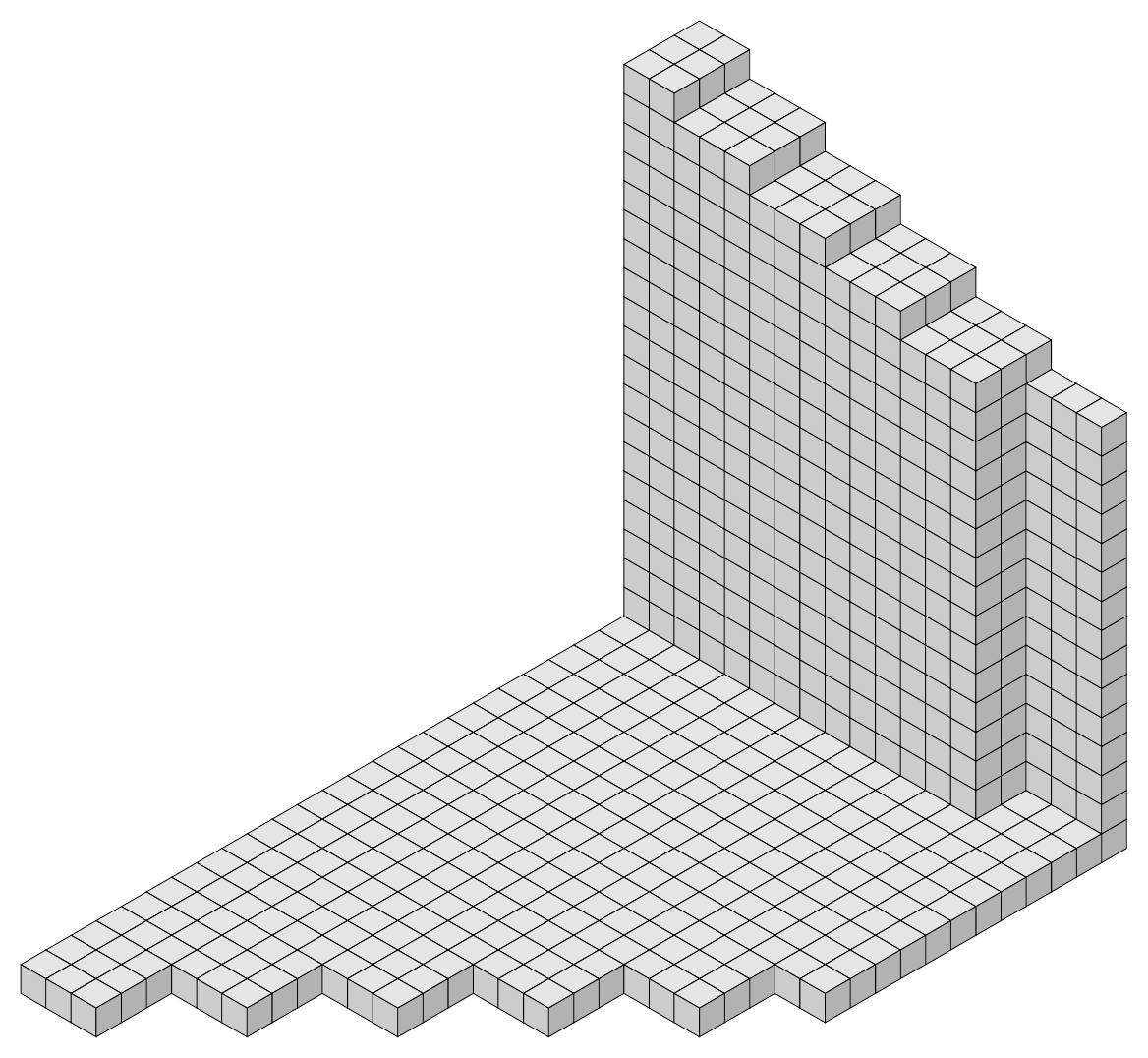}
    \label{img6}
\end{figure}

\subsubsection{Frobenius number and genus}

The largest label in $T$ is 
$$ (6k+4)(12k+11)^2 +(5k+4)(12k+12)^2 =1584 k^3 + 4176 k^2 + 3654 k + 1060,$$ 
which gives
$$F(S) = 1584 k^3 + 4032 k^2 + 3438 k + 979.$$
One can find that the sum of the labels of $T$ is 
$$ 120960 k^5 + 510624 k^4 + 856008 k^3 + 712854 k^2 + 295083 k + 48600,$$
which gives
$$ G(S)  = 840 k^3 + 2214 k^2 + 1935 k + 560.$$

\subsubsection{Catenary degree}
Using Proposition \ref{propminimalrelations} and Remark \ref{remark_a00_procedure}, we find that equations \eqref{12k+9minrelation0}-\eqref{12k+9minrelation3} are in fact the minimal relations.

By Lemmas \ref{lemma_catenary2} and \ref{lemma_catenary1}, the common values of each of equations \eqref{12k+9minrelation0}-\eqref{12k+9_2kequations_2} are Betti elements. 
By Theorem \ref{theorembettifind}, they are all the Betti elements, and there are $4k+7$ of them.

Similarly as before, using Lemma \ref{lemma_catenary1}, one can check that the largest catenary degree among the common values of equations \eqref{12k+9_eq1}-\eqref{12k+9_2kequations_2} is $\textup{c}((2k+2)(12k+9)^2 + (9k+8)(12k+10)^2 ) = 11k+10$ and that the catenary degrees of the common values of equations \eqref{12k+9minrelation0}-\eqref{12k+9minrelation3} are smaller than that, which gives
$$ \textup{c}(S) = 11k+10.$$

Lastly, we apply Theorem \ref{theorem_minimalpresentation} and construct a minimal presentation, analogously as before. The cardinality of the minimal presentation is $4k+7$.

\subsection{$n = 4m+2, \ m>1$}
\subsubsection{Ap{\'e}ry set}
We have the following:
\begin{align}
    (4m+5)(4m+2)^2 &= 2(4m+3)^2 + (4m-3)(4m+4)^2 + 2(4m+5)^2, \label{4m+2minrelation0} \\
    (m+4)(4m+3)^2 &= 1(4m+2)^2 + 2(4m+4)^2 + m(4m+5)^2, \label{4m+2minrelation1} \\
    (4m+3)(4m+4)^2 &= 4(4m+2)^2 + (4m+3)(4m+3)^2 + 0(4m+5)^2, \label{4m+2minrelation2}\\
    (m+1)(4m+5)^2 &= 0(4m+2)^2 + (m+1)(4m+3)^2 + 1(4m+4)^2,  \label{4m+2minrelation3}
\end{align}
and by Lemmas \ref{1remlemma} and \ref{lemmaminrelation1}, constructing points from the above equations and deleting their associated regions, the Ap{\'e}ry set is contained in the set of labels of the remaining figure.
Furthermore, we have the following relations that satisfy the conditions of Theorem \ref{theoremprocedure}:
\begin{align}
    (4m+4)(4m+2)^2 + (4m+3)^2 &= (4m)(4m+4)^2 + (4m+5)^2,\label{4m+2eq1}\\
    (4m+2)^2+3(4m+4)^2 &= 3(4m+3)^2 + (4m+5)^2,\label{4m+2eq2}\\
    (4m+5)(4m+2)^2 + (m-1)(4m+5)^2 &= (m+3)(4m+3)^2 + (4m-2)(4m+4)^2.\label{4m+2eq3}
\end{align}
Subtract equation \eqref{4m+2eq2} from \eqref{4m+2eq3} to get
\begin{equation}\label{4m+2eq4}
     (4m+4)(4m+2)^2 + (m)(4m+5)^2 = (m)(4m+3)^2 + (4m+1)(4m+4)^2.
\end{equation}

Now analogously as before, one can check that the figure $T$, carved out using all the above equations, has the volume $(4m+2)^2$, which implies that it is an $L$-shape.

\begin{figure}[h]
    \caption{the figure $T$ for $n =4m+2$ and $m=7$}
    \centering
    \includegraphics[width = 0.5\textwidth]{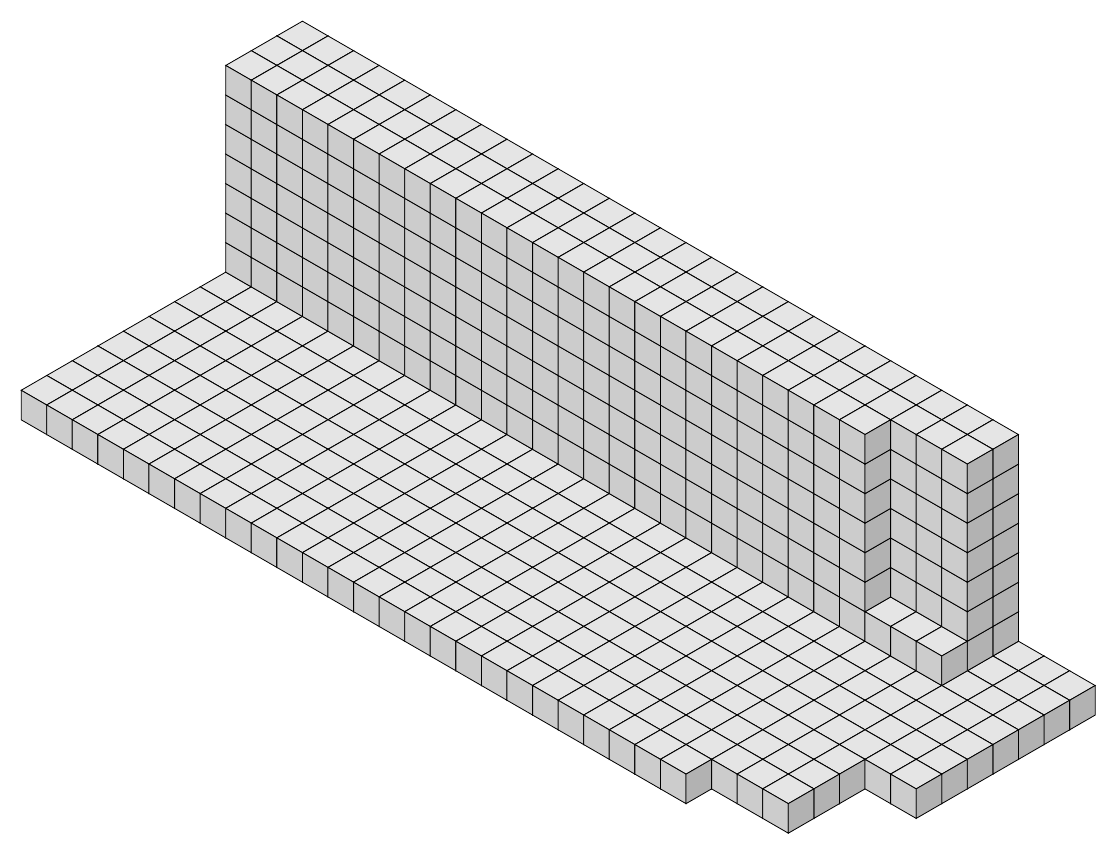}
    \label{img7}
\end{figure}

\subsubsection{Frobenius number and genus}

The largest label in $T$ is 
$$(m+2)(4m+3)^2 + (4m)(4m+4)^2 =80 m^3 + 184 m^2 + 121 m + 18,$$
which gives
$$ F(S) = 80 m^3 + 168 m^2 + 105 m + 14. $$
One can find that the sum of the labels of $T$ is 
$$ 640 m^5 + 2208 m^4 + 2816 m^3 + 1696 m^2 + 488 m + 54,$$
which gives
$$ G(S)  = 40 m^3 + 90 m^2 + 60 m + 12.$$

\subsubsection{Catenary degree}
Using Proposition \ref{propminimalrelations} and Remark \ref{remark_a00_procedure}, we find that equations \eqref{4m+2minrelation0}-\eqref{4m+2minrelation3} are in fact the minimal relations.

The common value of equation \eqref{4m+2eq3} is not a Betti element, by Lemma \ref{lemma_catenarycase1}.
By Lemmas \ref{lemma_catenary2} and \ref{lemma_catenary1}, the common values of each of equations \eqref{4m+2minrelation0}-\eqref{4m+2eq2}, \eqref{4m+2eq4} are Betti elements. 
By Theorem \ref{theorembettifind}, they are all the Betti elements, and there are $7$ of them.

Similarly as before, using Lemma \ref{lemma_catenary1}, one can check that the largest catenary degree among the common values of equations \eqref{4m+2eq1}, \eqref{4m+2eq2}, \eqref{4m+2eq4} is $\textup{c}((4m+4)(4m+2)^2 + (m)(4m+5)^2) = 5m+4$ and that the catenary degrees of the common values of equations \eqref{4m+2minrelation0}-\eqref{4m+2minrelation3} are smaller than that, which gives
$$ \textup{c}(S) = 5m+4.$$

Lastly, we apply Theorem \ref{theorem_minimalpresentation} and construct a minimal presentation, analogously as before. The cardinality of the minimal presentation is $7$.

\subsection{$n = 4m+3, \ m > 1$}
\subsubsection{Ap{\'e}ry set}
We have the following:
\begin{align}
    (m+2)(4m+3)^2 &= 2(4m+4)^2 + (m-2)(4m+5)^2 + 1(4m+6)^2, \label{4m+3minrelation0} \\
    (4m+11)(4m+4)^2 &= 2(4m+3)^2 + 2(4m+5)^2 + (4m+3)(4m+6)^2, \label{4m+3minrelation1} \\
    (m+1)(4m+5)^2 &= (m+1)(4m+3)^2 + 1(4m+4)^2 + 0(4m+6)^2, \label{4m+3minrelation2}\\
    (4m+5)(4m+6)^2 &= 0(4m+3)^2 + (4m+5)(4m+4)^2 + 4(4m+5)^2,  \label{4m+3minrelation3}
\end{align}
and by Lemmas \ref{1remlemma} and \ref{lemmaminrelation1}, constructing points from the above equations and deleting their associated regions, the Ap{\'e}ry set is contained in the set of labels of the remaining figure.
Furthermore, we have the following relations that satisfy the conditions of Theorem \ref{theoremprocedure}:
\begin{align}
    (m)(4m+3)^2 + (4m+9)(4m+4)^2 &= (m)(4m+5)^2 + (4m+4)(4m+6)^2, \label{4m+3eq1}\\
    (4m+3)^2+3(4m+5)^2 &= 3(4m+4)^2 + (4m+6)^2,\label{4m+3eq2}\\
    (4m+3)^2+(4m+4)(4m+6)^2 &= (4m+8)(4m+4)^2 + (4m+5)^2,\label{4m+3eq3}
\end{align}

Now analogously as before, one can check that the figure $T$, carved out using all the above equations, has the volume $(4m+3)^2$, which implies that it is an $L$-shape.

\begin{figure}[h]
    \caption{the figure $T$ for $n =4m+3$ and $m=6$}
    \centering
    \includegraphics[width = 0.45\textwidth]{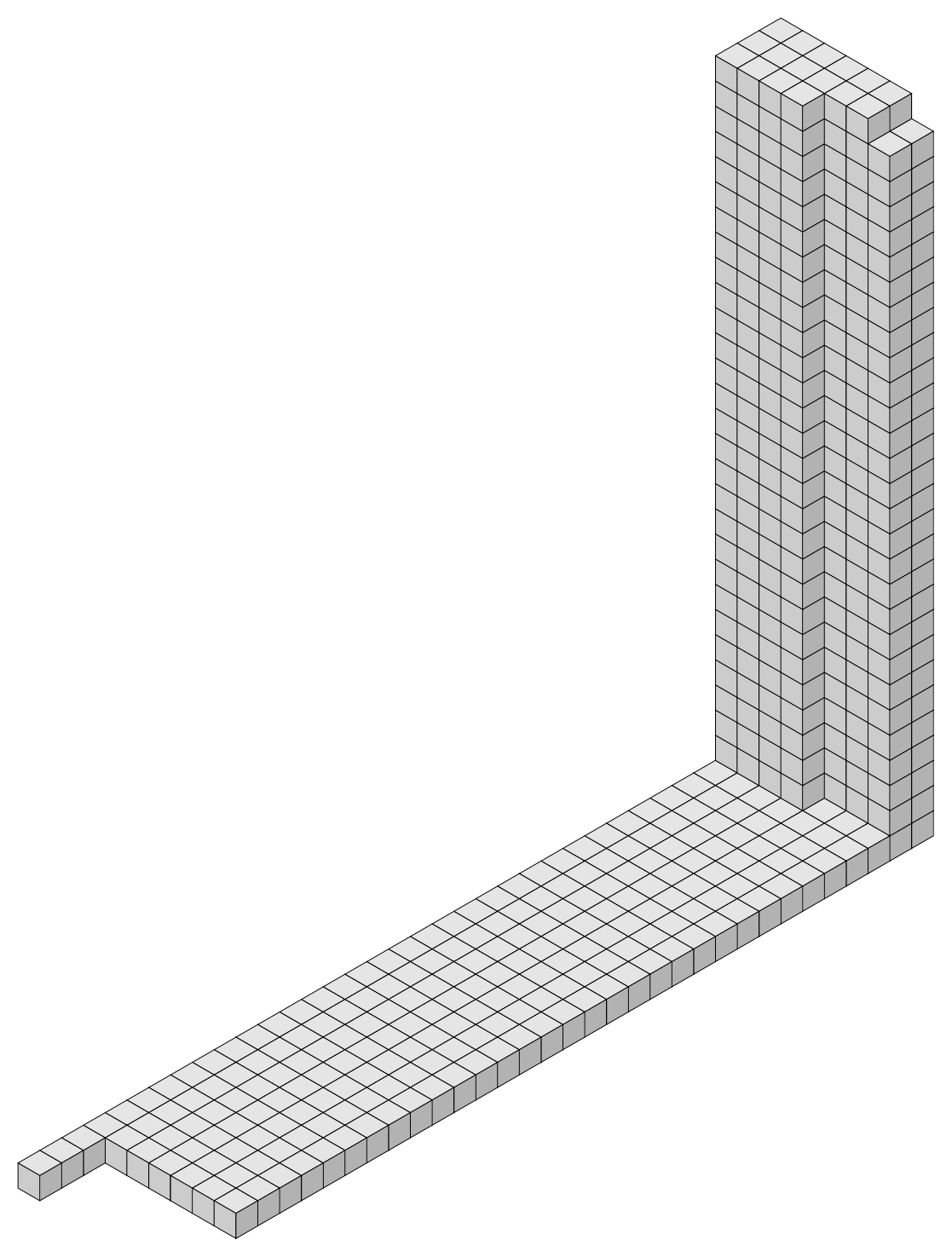}
    \label{img8}
\end{figure}

\subsubsection{Frobenius number and genus}

The largest label in $T$ is 
$$ (4m+4)^2 + (m-1)(4m+5)^2 + (4m+4)(4m+6)^2 =80 m^3 + 296 m^2 + 353 m + 135,$$
which gives
$$ F(S) = 80 m^3 + 280 m^2 + 329 m + 126.$$
One can find that the sum of the labels in $T$ is 
$$ 640 m^5 + 3488 m^4 + 7224 m^3 + 7246 m^2 + 3552 m + 684,$$
which gives
$$ G(S)  = 40 m^3 + 150 m^2 + 180 m + 72.$$

\subsubsection{Catenary degree}
Using Proposition \ref{propminimalrelations} and Remark \ref{remark_a00_procedure}, we find that equations \eqref{4m+3minrelation0}-\eqref{4m+3minrelation3} are in fact the minimal relations.

By Lemmas \ref{lemma_catenary2} and \ref{lemma_catenary1}, the common values of each of equations \eqref{4m+3minrelation0}-\eqref{4m+3eq3} are Betti elements. 
By Theorem \ref{theorembettifind}, they are all the Betti elements, and there are $7$ of them.

Similarly as before, using Lemma \ref{lemma_catenary1}, one can check that the largest catenary degree among the common values of equations \eqref{4m+3eq1}-\eqref{4m+3eq3} is $\textup{c}((m)(4m+3)^2 + (4m+9)(4m+4)^2) = 5m+9$ and that the catenary degrees of the common values of equations \eqref{4m+3minrelation0}-\eqref{4m+3minrelation3} are smaller than that, which gives
$$ \textup{c}(S) = 5m+9.$$

Lastly, we apply Theorem \ref{theorem_minimalpresentation} and construct a minimal presentation, analogously as before. The cardinality of the minimal presentation is $7$.

\subsection{Summary}

We can use \texttt{numericalsgps} \cite{NumericalSgps} \texttt{GAP} \cite{GAP4} package to supplement the above results, with all small cases of $n$, which were not covered so far.  

\begin{cor}\label{corfrobsquares}
The Frobenius number of the numerical semigroup generated by the four consecutive squares $n^2$, $(n+1)^2$, $(n+2)^2$, $(n+3)^2$ is as follows:
$$
\begin{cases}
\begin{aligned}
&\frac{11}{12} n^3 + 4 n^2 + \frac{9}{2} n - 2
    &&\text{if} \ \ n \equiv 0 \pmod{12},\ \text{and } n \ge 24,\\
&\frac{11}{12} n^3 + \frac{10}{3} n^2 + \frac{9}{2} n - 2
    &&\text{if} \ \ n \equiv 4 \pmod{12},\ \text{and } n \ge 16,\\
&\frac{11}{12} n^3 + \frac{11}{3} n^2 + \frac{9}{2} n - 2
    &&\text{if} \ \ n \equiv 8 \pmod{12},\ \text{and } n \ge 20,\\
&\frac{11}{12} n^3 + \frac{31}{12} n^2 + \frac{13}{4} n - \frac{3}{4}
    &&\text{if} \ \ n \equiv 1 \pmod{12},\ \text{and } n \ge 25,\\
&\frac{11}{12} n^3 + \frac{35}{12} n^2 + \frac{13}{4} n - \frac{11}{4}
    &&\text{if} \ \ n \equiv 5 \pmod{12},\ \text{and } n \ge 17,\\
&\frac{11}{12} n^3 + \frac{13}{4} n^2 + \frac{21}{4} n + \frac{1}{4}
    &&\text{if} \ \ n \equiv 9 \pmod{12},\ \text{and } n \ge 9,\\
&\frac{5}{4} n^3 + 3 n^2 - \frac{3}{4} n - \frac{13}{2}
    &&\text{if} \ \ n \equiv 2 \pmod{4},\ \text{and } n \ge 6,\\
&\frac{5}{4} n^3 + \frac{25}{4} n^2 + 11 n + 3
    &&\text{if} \ \ n \equiv 3 \pmod{4},\ \text{and } n \ge 7.
\end{aligned}
\end{cases}
$$
Moreover: $F(\langle 2^2, 3^2, 4^2, 5^2 \rangle) = 23$, 
$F(\langle 3^2, 4^2, 5^2, 6^2 \rangle) = 119$,
$F(\langle 4^2, 5^2, 6^2, 7^2 \rangle) = 119$,
$F(\langle 5^2, 6^2, 7^2, 8^2 \rangle) \\ = 240$,
$F(\langle 8^2, 9^2, 10^2, 11^2 \rangle) = 659$,
$ F(\langle12^2,13^2,14^2,15^2 \rangle) = 2045$, 
$F(\langle 13^2, 14^2, 15^2, 16^2 \rangle) = 2553$.

\end{cor}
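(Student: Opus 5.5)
The corollary only collects what Section~\ref{findingtheFnumber} has already established. The plan is to translate each case-by-case formula for $F(S)$ from the parameter $k$ (or $m$) back into $n$, and then to handle the leftover small values of $n$ by direct computation.

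\textbf{The cases modulo $12$.} For $n\equiv 0 \pmod{12}$ we have $n=12k$ with $k>1$, i.e.\ $n\ge 24$. Substituting $k=n/12$ into $F(S)=1584k^3+576k^2+54k-2$ gives
\[
F(S)=\tfrac{1584}{1728}n^3+\tfrac{576}{144}n^2+\tfrac{54}{12}n-2=\tfrac{11}{12}n^3+4n^2+\tfrac92 n-2 .
\]
For the residues $4,8,1,5,9$ one substitutes $k=(n-r)/12$ into the corresponding formula and expands the cubic. For instance, with $n=12k+4$ and $k>0$ (so $n\ge16$), expanding $1584k^3+2064k^2+902k+128$ in powers of $n$ should give $\tfrac{11}{12}n^3+\tfrac{10}{3}n^2+\tfrac92n-2$. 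This is routine polynomial arithmetic, and I would check each case by comparing leading coefficients and evaluating at one or two sample values of $k$.

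\textbf{The cases modulo $4$.} For $n\equiv 2\pmod 4$ we have $n=4m+2$ with $m>1$, i.e.\ $n\ge 10$. Substituting $m=(n-2)/4$ into $80m^3+168m^2+105m+14$ gives $\tfrac54 n^3+3n^2-\tfrac34n-\tfrac{13}{2}$. For $n\equiv 3\pmod 4$ we have $m=(n-3)/4$ and $m>1$, i.e.\ $n\ge 11$.

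\textbf{The boundary values of the stated ranges.} Some ranges in the statement start below the case hypotheses of the section:
\begin{itemize}
\item $n=6$ and $n=7$ (the section needs $m>1$);
\item $n=9$, which is $k=0$ for the residue $9$; this is allowed in that subsection, but I would double-check that its relations still make sense at $k=0$.
\end{itemize}
For these values I would compute $F$ with \texttt{numericalsgps} and check that it agrees with the polynomial. For example, at $n=6$ the formula gives $270+108-4.5-6.5=367$.

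\textbf{The exceptional small semigroups.} The values listed at the end of the statement ($n=2,3,4,5,8,12,13$) are exactly those $n\ge2$ not covered by any range. For them one computes the Apéry set directly with \texttt{numericalsgps}, or by hand with Theorem~\ref{theoremprocedure}, and reads off $F$.

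\textbf{Main obstacle.} The only real risk is the matching of ranges. One has to make sure that each residue class's lower bound is precisely where the generic relations in Section~\ref{findingtheFnumber} remain valid, i.e.\ all coefficients are nonnegative and the volume count of $T$ still equals $n^2$. For the few boundary values where the formula is asserted but the section's hypotheses do not cover them ($n=6,7$), I would rely on the explicit machine verification rather than try to extend the geometric argument.
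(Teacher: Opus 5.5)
Your proposal is correct and follows the paper's own route: the corollary rewrites the per-case formulas of Section~\ref{findingtheFnumber} in terms of $n$, and \texttt{numericalsgps} supplies $n=6,7$ (which lie below the $m>1$ hypothesis) and the exceptional values $n=2,3,4,5,8,12,13$; I checked that all eight substitutions agree. One caution: matching the leading coefficient and evaluating at one or two values of $k$ does not prove an identity between cubics, since that fixes only three of the four coefficients, so expand each case fully (or check four values) instead of relying on that spot-check.
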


\begin{cor}\label{corgenussquares}
The genus of the numerical semigroup generated by the four consecutive squares $n^2$, $(n+1)^2$, $(n+2)^2$, $(n+3)^2$ is as follows:
$$
\begin{cases}
\begin{aligned}
&\frac{35}{72} n^3 + \frac{17}{8} n^2 + \frac{9}{4} n
    &&\text{if} \ \ n \equiv 0 \pmod{12},\ \text{and } n \ge 24,\\
&\frac{35}{72} n^3 + \frac{17}{8} n^2 + \frac{25}{12} n - \frac{4}{9}
    &&\text{if} \ \ n \equiv 4 \pmod{12},\ \text{and } n \ge 16,\\
&\frac{35}{72} n^3 + \frac{17}{8} n^2 + \frac{9}{4} n - \frac{8}{9}
    &&\text{if} \ \ n \equiv 8 \pmod{12},\ \text{and } n \ge 20,\\
&\frac{35}{72} n^3 + \frac{9}{4} n^2 + \frac{21}{8} n + \frac{23}{36}
    &&\text{if} \ \ n \equiv 1 \pmod{12},\ \text{and } n \ge 13,\\
&\frac{35}{72} n^3 + \frac{9}{4} n^2 + \frac{59}{24} n - \frac{11}{36}
    &&\text{if} \ \ n \equiv 5 \pmod{12},\ \text{and } n \ge 5,\\
&\frac{35}{72} n^3 + \frac{9}{4} n^2 + \frac{21}{8} n - \frac{1}{4}
    &&\text{if} \ \ n \equiv 9 \pmod{12},\ \text{and } n \ge 9,\\
&\frac{5}{8} n^3 + \frac{15}{8} n^2 - \frac{1}{2}
    &&\text{if} \ \ n \equiv 2 \pmod{4},\ \text{and } n \ge 2,\\
&\frac{5}{8} n^3 + \frac{15}{4} n^2 + \frac{45}{8} n + \frac{9}{2}
    &&\text{if} \ \ n \equiv 3 \pmod{4},\ \text{and } n \ge 11.
\end{aligned}
\end{cases}
    $$
Moreover:  $G(\langle 3^2,4^2,5^2,6^2 \rangle) = 60$,
$ G(\langle 4^2,5^2,6^2,7^2 \rangle) = 66$, 
$G(\langle 7^2,8^2,9^2,10^2 \rangle) = 427$, \\ 
$G(\langle 8^2,9^2,10^2,11^2 \rangle) =  389$,
$ G(\langle12^2,13^2,14^2,15^2 \rangle) = 1161$.
\end{cor}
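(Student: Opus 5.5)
The genus formulas come from the genus values $G(S)$ already computed in each residue-class subsection, rewritten in terms of $n$. The small cases are handled separately by direct computation.

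\emph{Translation step.} In each subsection we have the Apéry-set sum of labels of $T$ and the resulting $G(S)$ as a polynomial in $k$ or $m$. For $n=12k+r$ with $r\in\{0,4,8,1,5,9\}$, substitute $k=(n-r)/12$. For $n=4m+2$ and $n=4m+3$, substitute $m=(n-2)/4$ and $m=(n-3)/4$. Expand and collect powers of $n$.

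For instance, for $n=12k$ we have $G=840k^3+306k^2+27k$. With $k=n/12$ this gives
\[
\frac{840}{1728}n^3+\frac{306}{144}n^2+\frac{27}{12}n=\frac{35}{72}n^3+\frac{17}{8}n^2+\frac94 n,
\]
matching the first line. For $n=4m+2$, the expression $40m^3+90m^2+60m+12$ becomes $\frac58 n^3+\frac{15}8n^2-\frac12$ after substitution. The remaining six cases are the same routine expansion. I would check each one, for example with \texttt{Mathematica}, and also cross-check numerically at one value of $n$ per class.

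\emph{Validity ranges.} Each subsection carries a hypothesis: $k>1$, $k>0$, $k\ge0$, or $m>1$. These translate into lower bounds $n\ge 24,16,20,25,17,9,10,11$ respectively. The corollary claims wider ranges in several classes: $n\ge13$ for $r=1$, $n\ge5$ for $r=5$, $n\ge2$ for $n\equiv2\pmod 4$, and $n\ge11$ for $n\equiv3\pmod4$.

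For every $n$ that lies in a claimed range but below the proven threshold, I would proceed in two ways:
\begin{itemize}
\item compute $G$ directly with \texttt{numericalsgps} in \texttt{GAP}, as the summary announces;
\item check that the polynomial happens to agree there.
\end{itemize}
The finitely many exceptional $n$ with $G$ listed explicitly ($n=3,4,7,8,12$) are pure machine computations. In those cases the polynomial fails, or the semigroup has a degenerate structure, for example $n=3$, where $9\mid 36$ so the embedding dimension drops.

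\emph{Main obstacle.} The only genuinely nontrivial ingredient is the correctness of the sums of labels of $T$ quoted in each subsection, since the corollary only reads them off. For $n=12k$ this sum was derived in detail. For the other seven classes the paper states the sums with only ``one can find.'' If I had to substantiate them, I would follow the $n=12k$ template:
\begin{itemize}
\item decompose $T$ into a bottom layer with staircase, a box, and two staircases of step $3\times1$ coming from the families of relations;
\item sum the $x$-, $y$- and $z$-coordinates separately over each piece as closed-form sums in $i$;
\item apply $G=\frac1{n^2}\sum\mathrm{Ap}-\frac{n^2-1}2$.
\end{itemize}
This bookkeeping, especially the boundary pieces of the staircases, is where errors are likely. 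It is best verified symbolically and against \texttt{GAP} for several $k$.
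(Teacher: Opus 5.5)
Your proposal is correct and follows the paper's own route: the corollary simply takes the per-class formulas for $G(S)$ from the subsections and rewrites them via $k=(n-r)/12$ or $m=(n-r)/4$ (all eight substitutions check out), with the remaining small $n$ settled by \texttt{numericalsgps} computations. One minor slip: $n\ge 11$ for $n\equiv 3 \pmod 4$ is not a wider range but exactly the one given by $m>1$, so besides the listed exceptions $n=3,4,7,8,12$, the only in-range values needing a separate machine check are $n=2,5,6,13$.
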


\begin{cor}\label{corcatenarysquares}
    The catenary degree of the numerical semigroup generated by the four consecutive squares $n^2$, $(n+1)^2$, $(n+2)^2$, $(n+3)^2$ is as follows:
$$
\begin{cases}
\begin{aligned}
&\frac{11}{12}n + 2
    &&\text{if} \ \ n \equiv 0 \pmod{12},\ \text{and } n \ge 24,\\
&\frac{11}{12}n + \frac{10}{3}
    &&\text{if} \ \ n \equiv 4 \pmod{12},\ \text{and } n \ge 16,\\
&\frac{11}{12}n + \frac{8}{3}
    &&\text{if} \ \ n \equiv 8 \pmod{12},\ \text{and } n \ge 20,\\
&\frac{11}{12}n + \frac{37}{12}
    &&\text{if} \ \ n \equiv 1 \pmod{12},\ \text{and } n \ge 13,\\
&\frac{11}{12}n + \frac{29}{12}
    &&\text{if} \ \ n \equiv 5 \pmod{12},\ \text{and } n \ge 5,\\
&\frac{11}{12}n + \frac{7}{4}
    &&\text{if} \ \ n \equiv 9 \pmod{12},\ \text{and } n \ge 9,\\
&\frac{5}{4}n + \frac{3}{2}
    &&\text{if} \ \ n \equiv 2 \pmod{4},\ \text{and } n \ge 14,\\
&\frac{5}{4}n + \frac{21}{4}
    &&\text{if} \ \ n \equiv 3 \pmod{4},\ \text{and } n \ge 7.
\end{aligned}
\end{cases}
    $$
Moreover:   
$\textup{c}(\langle 2^2,3^2,4^2,5^2 \rangle) = 9$, 
$\textup{c}(\langle 3^2,4^2,5^2,6^2 \rangle) = 16$,
 $ \textup{c}(\langle 4^2,5^2,6^2,7^2 \rangle) = 9$,
 $\textup{c}(\langle 6^2,7^2,8^2,9^2 \rangle) = 11$,
 $\textup{c}(\langle 8^2,9^2,10^2,11^2 \rangle) = 11$, 
$\textup{c}(\langle 10^2,11^2,12^2,13^2 \rangle) = 15$,
 $\textup{c}(\langle12^2,13^2,14^2,15^2 \rangle) = 15$.
\end{cor}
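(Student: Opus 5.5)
The plan is to assemble the corollary from the case-by-case computations of this section, which give $\textup{c}(S)$ in each residue class. We convert each case's formula from the auxiliary parameter back to $n$, and then treat the finitely many small $n$ left uncovered by direct computation.

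\emph{Case $n \equiv 0 \pmod{12}$.} For $n=12k$ with $k>1$, we showed $\textup{c}(S)=11k+2$. Substituting $k=n/12$ gives $\frac{11}{12}n+2$.

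\emph{Other residues modulo 12.} For $n=12k+4$ we have $11k+7 = \frac{11}{12}(n-4)+7 = \frac{11}{12}n+\frac{10}{3}$. For $n=12k+8$ we get $\frac{11}{12}(n-8)+10=\frac{11}{12}n+\frac{8}{3}$. For $n=12k+1$ we get $\frac{11}{12}(n-1)+4=\frac{11}{12}n+\frac{37}{12}$. For $n=12k+5$ we get $\frac{11}{12}(n-5)+7=\frac{11}{12}n+\frac{29}{12}$. For $n=12k+9$ we get $\frac{11}{12}(n-9)+10=\frac{11}{12}n+\frac{7}{4}$.

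\emph{Residues modulo 4.} For $n=4m+2$, $5m+4=\frac54(n-2)+4=\frac54 n+\frac32$. For $n=4m+3$, $5m+9=\frac54(n-3)+9=\frac54 n+\frac{21}{4}$.

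Each derivation in this section rests on Theorem \ref{theorembettifind} together with Lemmas \ref{lemma_catenary2}, \ref{lemma_catenary1} and \ref{lemma_catenarycase1} to identify $\textup{Betti}(S)$, and on Theorem \ref{theoremcatenarybetti} to reduce $\textup{c}(S)$ to the maximum of $\textup{c}(b)$ over Betti elements. So each formula is valid exactly on the range of the parameter for which that case was established.

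The step needing the most care is matching these parameter ranges to the thresholds in the statement. The derivations cover $n\ge 24$ ($k>1$) for $n\equiv 0$ and for $n\equiv 1$, $n\ge 16,20,17$ for residues $4,8,5$, and $n\ge 9$ for residue $9$. They cover $n\ge 10$ ($m>1$) for $n\equiv 2 \pmod 4$ and $n\ge 11$ for $n\equiv 3 \pmod 4$. The statement, however, asserts the formula on some larger ranges: from $n=13$ for residue $1$, from $n=5$ for residue $5$, and from $n=7$ for residue $3 \pmod 4$. It also excludes some values the derivations cover, namely $n=10$ for residue $2$ and $n=12$ for residue $0$.

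We settle every value below the derivation thresholds by direct computation with \texttt{numericalsgps} in \texttt{GAP}, i.e.\ $2\le n\le 23$, not just the excluded ones. This serves two purposes. First, it confirms that the general formula already holds at the extra values $n=13,5,7$; for instance, $n=13$ gives $11+4=15$ and $n=7$ gives $5+9=14$. Second, it produces the listed exceptional values for $n\in\{2,3,4,6,8,10,12\}$, and these are exactly the values where the formula fails. For example, $n=10$ gives $15$ rather than $5\cdot 2+4=14$. This is presumably because for small parameters some of the displayed relations degenerate or fail to be minimal, which breaks the case analysis.

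Once every integer $n\ge 2$ has been assigned either to a proved parametric case or to a machine-verified small case, the corollary follows.
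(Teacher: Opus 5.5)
Your plan of converting each residue-class formula of this section back to $n$ and settling the remaining small $n$ with \texttt{numericalsgps} is how the paper obtains the corollary, and all eight conversions are correct. The weak point is your handling of $n=10$. You assert that each formula holds on the full parameter range of its case. The case $n=4m+2$ is stated for $m>1$, which includes $m=2$. You then report that $n=10$ gives $15$, not $5m+4=14$. These two claims contradict each other. Your explanation, that the relations degenerate or stop being minimal, is not what happens. It also would not tell you from which $m$ onward the case can be trusted again, and you need that, because you invoke the $n=4m+2$ case for $n\ge 26$. Separately, $n=12$ is not covered by the $n=12k$ case, which requires $k>1$; your blanket computation for $n\le 23$ absorbs this slip.

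The actual failure is in the last step of the $n=4m+2$ case: the claim that the Betti elements coming from the minimal relations have catenary degree below $5m+4$. First, the relation \eqref{4m+2minrelation2} has its first two coefficients swapped; it should read $(4m+3)(4m+4)^2=(4m+3)(4m+2)^2+4(4m+3)^2$. By Lemma \ref{lemma_catenary2}, $(0,0,4m+3,0)$ is alone in its $\mathcal{R}$-class. For $m=2$, a direct check shows that the only other factorization of $11\cdot 144=1584$ is $(11,4,0,0)$. Hence $\textup{c}(1584)=\max\{11,15\}=15>14$.

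For $m\ge 3$ the problem disappears. Since $(4m+2)^2$ is the smallest generator, every factorization of $(4m+3)(4m+4)^2$ has length at most $\lfloor (4m+3)(4m+4)^2/(4m+2)^2\rfloor$. This bound equals $19=5m+4$ for $m=3$ and is less than $5m+4$ for $m\ge 4$. The other three minimal relations are handled by the same length bound. This is why the statement starts at $n\ge 14$ for $n\equiv 2 \pmod 4$ and lists $n=10$ separately. Including this check is what makes your appeal to the $n=4m+2$ case for larger $m$ legitimate.
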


\begin{cor}\label{corbettisquares}
    The cardinality of the minimal presentation of the numerical semigroup generated by the four consecutive squares $n^2$, $(n+1)^2$, $(n+2)^2$, $(n+3)^2$ is as follows:
$$
\begin{cases}
\begin{aligned}
&\frac{1}{3}n+6
    &&\text{if} \ \ n \equiv 0 \pmod{12},\ \text{and} \ n \ge 24,\\
&\frac{1}{3}n+\frac{20}{3}
    &&\text{if} \ \ n \equiv 4 \pmod{12},\ \text{and } n \ge 16,\\
&\frac{1}{3}n + \frac{16}{3}
    &&\text{if} \ \ n \equiv 8 \pmod{12},\ \text{and } n \ge 20,\\
&\frac{1}{3}n + \frac{14}{3}
    &&\text{if} \ \ n \equiv 1 \pmod{12},\ \text{and } n \ge 13,\\
&\frac{1}{3}n + \frac{16}{3}
    &&\text{if} \ \ n \equiv 5 \pmod{12},\ \text{and } n \ge 17,\\
&\frac{1}{3}n + 4
    &&\text{if} \ \ n \equiv 9 \pmod{12},\ \text{and } n \ge 9,\\
&7
    &&\text{if} \ \ n \equiv 2 \pmod{4},\ \text{and } n \ge 6, \\
&7
    &&\text{if} \ \ n \equiv 3 \pmod{4},\ \text{and } n \ge 15.
\end{aligned}
\end{cases}
    $$
Moreover:
$\# \rho(\langle 2^2,3^2,4^2,5^2 \rangle) = 1$,
$\# \rho(\langle 3^2,4^2,5^2,6^2 \rangle) = 1$,
$\# \rho(\langle 4^2,5^2,6^2,7^2 \rangle) = 6$, 
$\# \rho(\langle 5^2,6^2,7^2,8^2 \rangle) = 6$,
$\# \rho(\langle 7^2,8^2,9^2,10^2 \rangle) = 7$,
$\# \rho(\langle 8^2,9^2,10^2,11^2 \rangle) = 10$, 
$\# \rho(\langle 11^2,12^2,13^2,14^2 \rangle) = 6$,\\
$\# \rho(\langle 12^2,13^2,14^2,15^2 \rangle) = 11$.
\end{cor}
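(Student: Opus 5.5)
The count follows by collecting, for each residue class of $n$, the cardinality of the minimal presentation already obtained at the end of the corresponding subsection, and rewriting it in terms of $n$. By Theorem \ref{theorem_minimalpresentation}, a minimal presentation has cardinality $\sum_{b\in \textup{Betti}(S)}(l_b-1)$, where $l_b$ is the number of $\mathcal{R}$-classes of $\varphi^{-1}(b)$. So I first check, in every case, that each Betti element has exactly two $\mathcal{R}$-classes. Then the cardinality equals $\#\textup{Betti}(S)$, and this is the number counted in each subsection.

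\emph{Betti elements arising from non-minimal relations.} For elements coming from identities $\lambda_0d_0+\lambda_id_i=\lambda_jd_j+\lambda_kd_k$, Lemma \ref{lemma_catenary1} gives exactly two factorizations. Their supports $\{0,i\}$ and $\{j,k\}$ are disjoint, so there are exactly two classes.

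\emph{Betti elements $a_{ii}d_i$.} Lemma \ref{lemma_catenary2} shows that every factorization other than $a_{ii}e_i$ has $i$-th coordinate $0$. I would still need to show that all those other factorizations lie in one $\mathcal{R}$-class. I would argue this exactly as in the $n=12k$ subsection: the pairs $\rho_{a_{ii}d_i}$ written there consist of a single pair. Their graphs are connected because each competing factorization is linked, through a shared generator, to the displayed one. I would make this explicit by reusing the non-membership checks there, such as $9k(12k+2)^2\notin\langle(12k+1)^2\rangle$.

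\emph{Converting to $n$.} With this in hand, I read off the counts: $4k+6$, $4k+8$, $4k+8$, $4k+5$, $4k+7$, $4k+7$ for $n=12k,12k+4,12k+8,12k+1,12k+5,12k+9$, and $7$ for $n=4m+2$ and $n=4m+3$. Substituting $k=(n-r)/12$ gives the stated formulas; for example, $4k+8=\frac{n-4}{3}+8=\frac13 n+\frac{20}{3}$ when $n=12k+4$, and the other cases are identical arithmetic. The lower bounds on $n$ come from the hypotheses of each subsection: $k>1$, $k>0$, $k\ge0$, $m>1$.

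\emph{Small cases and the main obstacle.} Any $n$ below these thresholds but included in the statement, for example $n=13$ and $n=6$, is verified directly with \texttt{numericalsgps}. The listed exceptional values are also computed there.

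The main obstacle is that a general formula can fail for small parameters even when the subsection's hypotheses are formally met. This happens if two of the listed common values coincide, or if some listed family is empty. The entry $\#\rho(\langle 11^2,12^2,13^2,14^2\rangle)=6$, with $n=11$ and $m=2$, shows this does happen. So for each family I would check the inequalities guaranteeing that the listed Betti elements are pairwise distinct; they are linear in $k$ or $m$ and determine the true thresholds. Concretely, I would compare the values of the three identities (e.g.\ \eqref{4m+3eq1}--\eqref{4m+3eq3}) and of the four minimal relations as polynomials in $m$, find the finitely many $m$ where two agree, and transfer those $n$ to the exceptional list computed by GAP. This is why the threshold $n\ge 15$ is used for $n\equiv 3 \pmod 4$.
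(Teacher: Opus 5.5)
Your skeleton is the paper's: read off the count at the end of each subsection, note that every Betti element has exactly two $\mathcal{R}$-classes so that $\#\rho=\#\textup{Betti}(S)$, rewrite the counts in terms of $n$ (your conversions are correct), and settle the remaining small $n$ with \texttt{numericalsgps}. The gap is in how you decide where the general count is valid. You attribute the exception at $n=11$ to two listed common values coinciding, and propose to find exceptional $n$ by comparing those values as polynomials in $m$. You then claim this explains the threshold $n\ge 15$ for $n\equiv 3 \pmod 4$. That is false. At $m=2$ the seven candidates $484, 2736, 507, 2548, 2690, 628, 2473$ are pairwise distinct, so your check would certify $m=2$ and predict $7$, contradicting the listed value $6$.

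What actually fails is a hypothesis of Lemma \ref{lemma_catenary1}, which your first step uses without checking. Take the identity \eqref{4m+3eq2}, i.e.\ $d_0+3d_2=3d_1+d_3$, with point $(3,-3,1)$. The analogue of the lemma for this point needs $\lambda_2=3<a_{22}=m+1$, and this fails at $m=2$. In fact the point is then not in $V$ at all: its region lies inside the region of $(a_{01},a_{02},a_{03})=(2,m-2,1)=(2,0,1)$. Lemma \ref{lemma_catenarycase1} applies instead. Concretely, $628=121+3\cdot169=3\cdot144+196=4\cdot121+144$, and the third factorization $(4,1,0,0)$ shares a generator with each of the other two. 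So $628$ is not a Betti element, and only six remain.

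The check you need is therefore not distinctness. For every counted identity $\lambda_0d_0+\lambda_id_i=\lambda_jd_j+\lambda_kd_k$, you must verify that its point lies in $V$ and that $\lambda_0<a_{00}$ and $\lambda_i<a_{ii}$. You also need the two-class verification for the values $a_{ii}d_i$, which you handle correctly. For the $4m+3$ family this yields exactly $m\ge 3$, i.e.\ $n\ge 15$. The subsection's stated hypothesis $m>1$ is too weak for its Betti count, which is why the corollary moves $n=11$ to the exceptional list. In all the other families these inequalities hold throughout the subsections' ranges, so once this check is added your argument goes through.
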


\section{Four consecutive triangular numbers}\label{triangularnumbers}

In this section, we will find the Frobenius number, the genus, the set of Betti elements, the catenary degree, and a minimal presentation of the numerical semigroup generated by four consecutive triangular numbers; $S = \langle \binom{n+1}{2},\binom{n+2}{2}, \binom{n+3}{2}, \binom{n+4}{2} \rangle $. We will use the method proposed in Sections \ref{geometricprocedure} and \ref{bettielements}, where we put (for now) $(d_0,d_1,d_2,d_3) = \left(\binom{n+1}{2},\binom{n+2}{2}, \binom{n+3}{2}, \binom{n+4}{2} \right)$. 

We will call $T$ the figure that will be obtained by removing regions from the initial collection of cubes in each case of $n \pmod{6}$. The cubes of $T$ will be labeled (as always), where
the cube $[[a,b,c]]$ will be labeled with $ad_1 + bd_2 + cd_3$. We will know that the set of labels of $T$ is the Ap{\'e}ry set with respect to $d_0$ after we calculate that $T$ has exactly $d_0$ cubes.

We sum up the results of this section in Corollaries \ref{corfrobtriangular}-\ref{corbettitriangular}.

\subsection{$n = 6k, k>1$}
\subsubsection{Ap{\'e}ry set}
We have the following:
\begin{align}
    (3k+1)\binom{6k+1}{2} &= 3k \binom{6k+2}{2} + 0 \binom{6k+3}{2} + 0\binom{6k+4}{2}, \label{6kminrelation0}\\
     3k \binom{6k+2}{2} &= (3k+1)\binom{6k+1}{2}+ 0 \binom{6k+3}{2} + 0\binom{6k+4}{2}, \label{6kminrelation1}\\
     3k \binom{6k+3}{2} &= (2k+1)\binom{6k+1}{2} + 0\binom{6k+2}{2} + k\binom{6k+4}{2}, \label{6kminrelation2}\\
     (2k+1)\binom{6k+4}{2} &= (2k+1)\binom{6k+1}{2} + 0\binom{6k+2}{2} + 2\binom{6k+3}{2}.\label{6kminrelation3}
\end{align}
and by Lemmas \ref{1remlemma} and \ref{lemmaminrelation1}, constructing points from the above equations and deleting their associated regions, the Ap{\'e}ry set is contained in the set of labels of the remaining figure.
Furthermore, we have the following relations that satisfy the conditions of Theorem \ref{theoremprocedure}:
\begin{align}
    2k \binom{6k+1}{2} + 3 \binom{6k+2}{2} &= \binom{6k+3}{2} + 2k\binom{6k+4}{2},\label{6keq1}\\
   \binom{6k+1}{2} + 3 \binom{6k+3}{2} &= 3\binom{6k+2}{2} + \binom{6k+4}{2},\label{6keq2}\\
   3k \binom{6k+1}{2} +  \binom{6k+4}{2} &= (3k-3)\binom{6k+2}{2} + 3\binom{6k+3}{2} .\label{6keq3}
\end{align}
Subtracting equation \eqref{6keq2} from \eqref{6keq1} we obtain additional $k-1$ relations:
\begin{equation}\label{6k_k-1equations_1}
\begin{aligned}  
 (2k-1) \binom{6k+1}{2} + 6 \binom{6k+2}{2} &= 4\binom{6k+3}{2} + (2k-1)\binom{6k+4}{2},\\
 (2k-2) \binom{6k+1}{2} + 9 \binom{6k+2}{2} &= 7\binom{6k+3}{2} + (2k-2)\binom{6k+4}{2},\\
 & \hspace{5.6pt} \vdots\\
  (k+2)\binom{6k+1}{2} + (3k-3) \binom{6k+2}{2} &= (3k-5)\binom{6k+3}{2} + (k+2)\binom{6k+4}{2},\\
  (k+1)\binom{6k+1}{2} + 3k \binom{6k+2}{2} &= (3k-2)\binom{6k+3}{2} + (k+1)\binom{6k+4}{2}.
\end{aligned}
\end{equation}
Similarly, we can subtract equation \eqref{6keq2} from \eqref{6keq3} and obtain further $k-2$ relations:
\begin{equation}\label{6k_k-2equations_2}
\begin{aligned} 
 (3k-1) \binom{6k+1}{2} +  2\binom{6k+4}{2} &= (3k-6)\binom{6k+2}{2} + 6\binom{6k+3}{2},\\
 (3k-2) \binom{6k+1}{2} +  3\binom{6k+4}{2} &= (3k-9)\binom{6k+2}{2} + 9\binom{6k+3}{2},\\
 & \hspace{5.6pt} \vdots\\
 (2k+3) \binom{6k+1}{2} +  (k-2)\binom{6k+4}{2} &= 6\binom{6k+1}{2} + (3k-6)\binom{6k+2}{2},\\
 (2k+2) \binom{6k+1}{2} +  (k-1)\binom{6k+4}{2} &= 3\binom{6k+2}{2} + (3k-3)\binom{6k+3}{2}.\\
\end{aligned}
\end{equation}

Analogously as always, one can check that the figure $T$, carved out using all the above equations, has the volume $\binom{6k+1}{2}$, which implies that it is an $L$-shape.

\begin{figure}[h]
    \caption{the figure $T$ for $n = 6k$ and $k=6$}
    \centering
    \includegraphics[width = 0.5\textwidth]{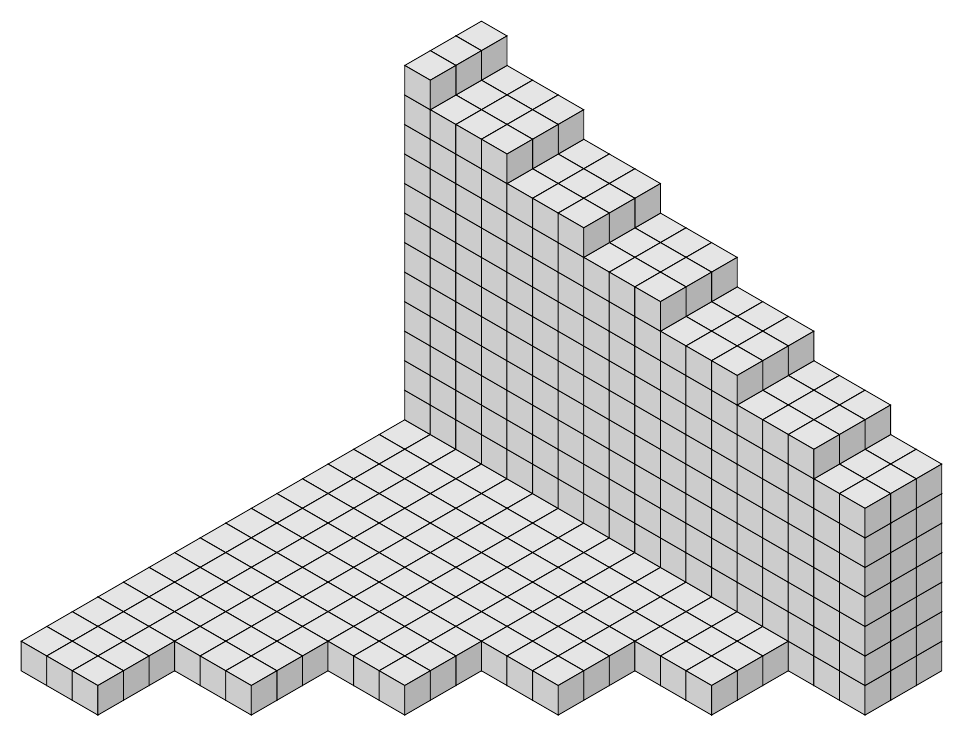}
    \label{fig6k}
\end{figure}

\subsubsection{Frobenius number and genus}

The largest label in $T$ is  
$$  2\binom{6k+2}{2} + (3k-1)\binom{6k+3}{2} + k\binom{6k+4}{2} = 72 k^3 + 84 k^2 + 18 k - 1,$$ 
which gives
$$ F(S) = 72 k^3 + 66 k^2 + 15k - 1.$$
One can also find that the sum of the labels in $T$ is 
$$ 675 k^5 + \frac{1845}{2} k^4 + 297 k^3 + 18 k^2 - \frac{3}{2} k,$$
which gives
$$ G(S)  = \frac{75}{2}k^3 + 36k^2 + \frac{15}{2}k.$$

\subsubsection{Catenary degree}
Using Proposition \ref{propminimalrelations} and Remark \ref{remark_a00_procedure}, we find that \eqref{6kminrelation0}-\eqref{6kminrelation3} are in fact the minimal relations.

By Lemmas \ref{lemma_catenary2} and \ref{lemma_catenary3}, the common values of each of equations \eqref{6kminrelation0}-\eqref{6k_k-2equations_2} are Betti elements. 
By Theorem \ref{theorembettifind}, they are all the Betti elements, and there are $2k+3$ of them (as the common values of \eqref{6kminrelation0} and \eqref{6kminrelation1} are equal).

From Lemmas \ref{lemma_catenary1} and \ref{lemma_catenarycase1}, we obtain the sets of factorizations of the common values of each of equations \eqref{6keq1}-\eqref{6k_k-2equations_2}.
One can easily check that the largest catenary degree among these values is \\ $\textup{c} \left((k+1)\binom{6k+1}{2} + 3k \binom{6k+2}{2} \right) = 4k+1$ and that the catenary degrees of the common values of equations \eqref{6kminrelation0}-\eqref{6kminrelation3} are smaller than that, which implies
$$ \textup{c}(S) = 4k+1.$$

Lastly, to obtain a minimal presentation, we apply Theorem \ref{theorem_minimalpresentation}. Similarly as always, we do not have the sets of factorizations of the common values of the minimal relations. However, we can see that $(3k)\binom{6k+3}{2}, (2k+1)\binom{6k+4}{2} \notin \langle \binom{6k+2}{2} \rangle$ and that 
$ (3k+1)\binom{6k+1}{2} = 3k \binom{6k+2}{2} \notin \langle \binom{6k+3}{2}, \binom{6k+4}{2} \rangle, $
as $\gcd \left( \binom{6k+3}{2}, \binom{6k+4}{2} \right) = 6k+3 \nmid (3k+1)\binom{6k+1}{2}$. This means that we can construct the minimal presentation as always, and it has cardinality $2k+3$.

\subsection{$n = 6k+2, k>1$}
\subsubsection{Ap{\'e}ry set}
We have the following:
\begin{align}
    (3k+2)\binom{6k+3}{2} &= (3k+1) \binom{6k+4}{2} + 0 \binom{6k+5}{2} + 0\binom{6k+6}{2}, \label{6k+2minrelation0}\\
     (3k+1) \binom{6k+4}{2} &= (3k+2)\binom{6k+3}{2}+ 0 \binom{6k+5}{2} + 0\binom{6k+6}{2}, \label{6k+2minrelation1}\\
     (3k+3) \binom{6k+5}{2} &= (2k+1)\binom{6k+3}{2} + 2\binom{6k+4}{2} + (k+1)\binom{6k+6}{2}, \label{6k+2minrelation2}\\
     (2k+1)\binom{6k+6}{2} &= (2k+1)\binom{6k+3}{2} + 2\binom{6k+4}{2} + 0\binom{6k+5}{2}.\label{6k+2minrelation3}
\end{align}
and by Lemmas \ref{1remlemma} and \ref{lemmaminrelation1}, constructing points from the above equations and deleting their associated regions, the Ap{\'e}ry set is contained in the set of labels of the remaining figure.
Furthermore, we have the following relations that satisfy the conditions of Theorem \ref{theoremprocedure}:
\begin{align}
    2k \binom{6k+3}{2} + 5 \binom{6k+4}{2} &= 3\binom{6k+5}{2} + 2k\binom{6k+6}{2},\label{6k+2eq1}\\
   \binom{6k+3}{2} + 3 \binom{6k+5}{2} &= 3\binom{6k+4}{2} + \binom{6k+6}{2},\label{6k+2eq2}\\
   (3k+1)\binom{6k+3}{2} +  \binom{6k+6}{2} &= (3k-2)\binom{6k+4}{2} + 3\binom{6k+5}{2} .\label{6k+2eq3}
\end{align}
Subtracting equation \eqref{6k+2eq2} from \eqref{6k+2eq1}, we obtain additional $k-1$ relations:
\begin{equation}\label{6k+2_k-1equations_1}
\begin{aligned}  
 (2k-1) \binom{6k+3}{2} + 8 \binom{6k+4}{2} &= 6\binom{6k+5}{2} + (2k-1)\binom{6k+6}{2},\\
 (2k-2) \binom{6k+3}{2} + 11 \binom{6k+4}{2} &= 9\binom{6k+5}{2} + (2k-2)\binom{6k+6}{2},\\
 & \hspace{5.6pt} \vdots\\
 (k+2) \binom{6k+3}{2} + (3k-1) \binom{6k+4}{2} &=  (3k-3)\binom{6k+5}{2} + (k+2)\binom{6k+6}{2},\\
 (k+1) \binom{6k+3}{2} + (3k+2) \binom{6k+4}{2} &=  (3k)\binom{6k+5}{2} + (k+1)\binom{6k+6}{2}.
\end{aligned}
\end{equation}
Similarly, we can subtract equation \eqref{6k+2eq2} from \eqref{6k+2eq3} and obtain further $k-1$ relations:
\begin{equation}\label{6k+2_k-1equations_2}
\begin{aligned} 
 (3k)\binom{6k+3}{2} +  2\binom{6k+6}{2} &= (3k-5)\binom{6k+4}{2} + 6\binom{6k+5}{2},\\
 (3k-1)\binom{6k+3}{2} +  3\binom{6k+6}{2} &= (3k-8)\binom{6k+4}{2} + 9\binom{6k+5}{2},\\
 & \hspace{5.6pt} \vdots\\
 (2k+3)\binom{6k+3}{2} +  (k-1)\binom{6k+6}{2} &= 4\binom{6k+4}{2} + (3k-3)\binom{6k+5}{2},\\
 (2k+2)\binom{6k+3}{2} +  (k)\binom{6k+6}{2} &= 1\binom{6k+4}{2} + (3k)\binom{6k+5}{2}.
\end{aligned}
\end{equation}

Analogously as always, one can check that the figure $T$, carved out using all the above equations, has the volume $\binom{6k+3}{2}$, which implies that it is an $L$-shape.

\begin{figure}[h]
    \caption{the figure $T$ for $n = 6k+2$ and $k=6$}
    \centering
    \includegraphics[width = 0.5\textwidth]{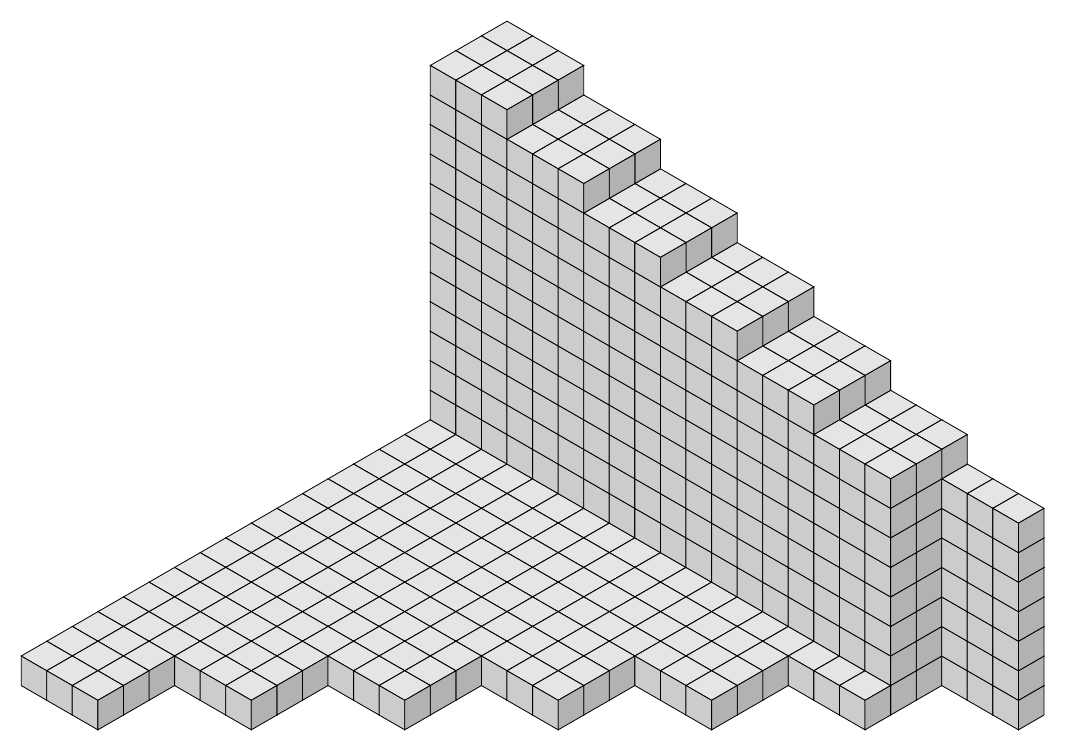}
    \label{fig6k+2}
\end{figure}
\subsubsection{Frobenius number and genus}
The largest label in $T$ is  
$$   (3k+2)\binom{6k+5}{2} + k\binom{6k+6}{2} = 72 k^3 + 150 k^2 + 99 k + 20,$$ 
which gives
$$ F(S) =72 k^3 + 132 k^2 + 84 k + 17.$$
One can also find that the sum of the labels in $T$ is 
$$ 675 k^5 + \frac{4095}{2} k^4 + 2295 k^3 + 1215 k^2 + \frac{615}{2}k + 30 ,$$
which gives
$$ G(S)  = \frac{75}{2} k^3 + \frac{147}{2} k^2 + 45 k + 9.$$

\subsubsection{Catenary degree}
Using Proposition \ref{propminimalrelations} and Remark \ref{remark_a00_procedure}, we find that \eqref{6k+2minrelation0}-\eqref{6k+2minrelation3} are in fact the minimal relations.

By Lemmas \ref{lemma_catenary2} and \ref{lemma_catenary3}, the common values of each of equations \eqref{6k+2minrelation0}-\eqref{6k+2_k-1equations_2} are Betti elements. 
By Theorem \ref{theorembettifind}, they are all the Betti elements, and there are $2k+4$ of them (as the common values of \eqref{6k+2minrelation0} and \eqref{6k+2minrelation1} are equal).

From Lemmas \ref{lemma_catenary1} and \ref{lemma_catenarycase1}, we obtain the sets of factorizations of the common values of each of equations \eqref{6k+2eq1}-\eqref{6k+2_k-1equations_2}.
One can check that the largest catenary degree among these values is $\textup{c} \left((k+1) \binom{6k+3}{2} + (3k+2) \binom{6k+4}{2} \right) = 4k+3$ and that the catenary degrees of the common values of equations \eqref{6k+2minrelation0}-\eqref{6k+2minrelation3} is smaller than that, which gives
$$ \textup{c}(S) = 4k+3.$$

Lastly, we apply Theorem \ref{theorem_minimalpresentation} and construct a minimal presentation, analogously as before. The cardinality of the minimal presentation is $2k+4$.

\subsection{$n = 6k+4, k>1$}
\subsubsection{Ap{\'e}ry set}
We have the following:
\begin{align}
    (3k+3)\binom{6k+5}{2} &= (3k+2)\binom{6k+6}{2} + 0\binom{6k+7}{2} + 0\binom{6k+8}{2}, \label{6k+4minrelation0} \\
    (3k+2)\binom{6k+6}{2} &= (3k+3)\binom{6k+5}{2} + 0\binom{6k+7}{2} + 0\binom{6k+8}{2}, \label{6k+4minrelation1} \\
    (3k+3)\binom{6k+7}{2} &= (2k+2)\binom{6k+5}{2} + 1\binom{6k+6}{2} + (k+1)\binom{6k+8}{2}, \label{6k+4minrelation2} \\
    (2k+2)\binom{6k+8}{2} &= (2k+2)\binom{6k+5}{2} + 1\binom{6k+6}{2} + 1\binom{6k+7}{2}, \label{6k+4minrelation3} 
\end{align}
and by Lemmas \ref{1remlemma} and \ref{lemmaminrelation1}, constructing points from the above equations and deleting their associated regions, the Ap{\'e}ry set is contained in the set of labels of the remaining figure.
Furthermore, we have the following relations that satisfy the conditions of Theorem \ref{theoremprocedure}:
\begin{align}
    (2k+1) \binom{6k+5}{2} + 4 \binom{6k+6}{2} &= 2\binom{6k+7}{2} + (2k+1)\binom{6k+8}{2}, \label{6k+4eq1}\\
   \binom{6k+5}{2} + 3 \binom{6k+7}{2} &= 3\binom{6k+6}{2} + \binom{6k+8}{2}, \label{6k+4eq2}\\
   (3k+2)\binom{6k+5}{2} +  \binom{6k+8}{2} &= (3k-1)\binom{6k+6}{2} + 3\binom{6k+7}{2} . \label{6k+4eq3}
\end{align}
Subtracting equation \eqref{6k+4eq2} from \eqref{6k+4eq1}, we obtain additional $k$ relations:
\begin{equation}\label{6k+4_kequations_1}
\begin{aligned}    
(2k) \binom{6k+5}{2} + 7 \binom{6k+6}{2} &= 5\binom{6k+7}{2} + (2k)\binom{6k+8}{2},\\
(2k-1) \binom{6k+5}{2} + 10 \binom{6k+6}{2} &= 8\binom{6k+7}{2} + (2k-1)\binom{6k+8}{2},\\
 & \hspace{5.6pt}  \vdots      \\
(k+2) \binom{6k+5}{2} + (3k+1) \binom{6k+6}{2} &= (3k-1)\binom{6k+7}{2} + (k+2)\binom{6k+8}{2}, \\
(k+1) \binom{6k+5}{2} + (3k+4) \binom{6k+6}{2} &= (3k+2)\binom{6k+7}{2} + (k+1)\binom{6k+8}{2}.
\end{aligned}
\end{equation}
Similarly, we can subtract equation \eqref{6k+4eq2} from \eqref{6k+4eq3} and obtain further $k-1$ relations:
\begin{equation}\label{6k+4_kequations_2}
\begin{aligned}    
(3k+1)\binom{6k+5}{2} +  2\binom{6k+8}{2} &= (3k-4)\binom{6k+6}{2} + 6\binom{6k+7}{2},\\
(3k)\binom{6k+5}{2} +  3\binom{6k+8}{2} &= (3k-7)\binom{6k+6}{2} + 9
\binom{6k+7}{2},\\
 & \hspace{5.6pt}  \vdots      \\
(2k+4)\binom{6k+5}{2} +  (k-1)\binom{6k+8}{2} &= 5\binom{6k+6}{2} + (3k-3)\binom{6k+7}{2}, \\
(2k+3)\binom{6k+5}{2} +  (k)\binom{6k+8}{2} &= 2\binom{6k+6}{2} + (3k)\binom{6k+7}{2}.
\end{aligned}
\end{equation}

Analogously as always, one can check that the figure $T$, carved out using all the above equations, has the volume $\binom{6k+5}{2}$, which implies that it is an $L$-shape.

\begin{figure}[h]
    \caption{the figure $T$ for $n = 6k+4$ and $k=6$}
    \centering
    \includegraphics[width = 0.5\textwidth]{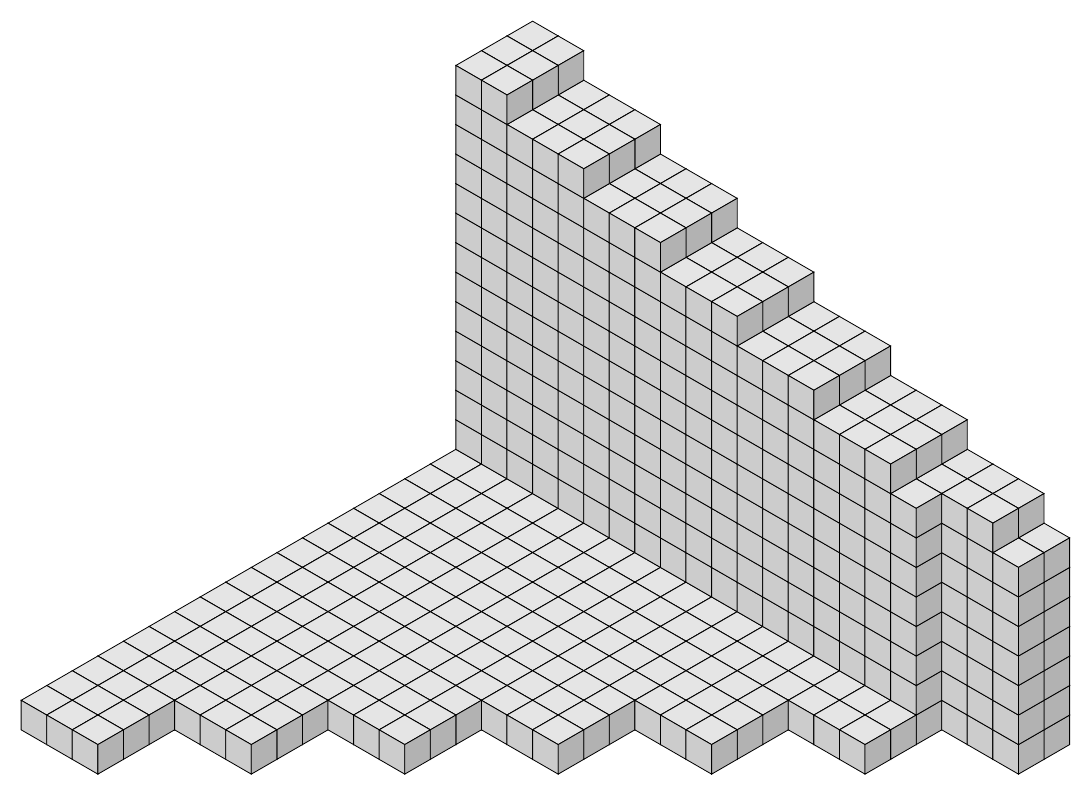}
    \label{fig6k+4}
\end{figure}

\subsubsection{Frobenius number and genus}
The largest label in $T$ is  
$$ \binom{6k+6}{2} +  (3k+1)\binom{6k+7}{2} + (k+1)\binom{6k+8}{2} =  72 k^3 + 216 k^2 + 208 k + 64,$$ 
which gives
$$ F(S) = 72 k^3 + 198 k^2 + 181 k + 54.$$
One can also find that the sum of the labels in $T$ is 
$$ 675 k^5 + \frac{6345}{2} k^4 + 5739 k^3 + 5025 k^2 + \frac{4277}{2}k + 355 ,$$
which gives
$$ G(S)  = \frac{75}{2} k^3 + 111 k^2 + \frac{209}{2} k + 31.$$

\subsubsection{Catenary degree}
Using Proposition \ref{propminimalrelations} and Remark \ref{remark_a00_procedure}, we find that \eqref{6k+4minrelation0}-\eqref{6k+4minrelation3} are in fact the minimal relations.

By Lemmas \ref{lemma_catenary2} and \ref{lemma_catenary3}, the common values of each of equations \eqref{6k+4minrelation0}-\eqref{6k+4_kequations_2} are Betti elements. 
By Theorem \ref{theorembettifind}, they are all the Betti elements, and there are $2k+5$ of them (as the common values of \eqref{6k+4minrelation0} and \eqref{6k+4minrelation1} are equal).

From Lemmas \ref{lemma_catenary1} and \ref{lemma_catenarycase1}, we obtain the sets of factorizations of the common values of each of equations \eqref{6k+4eq1}-\eqref{6k+4_kequations_2}.
One can easily check that the largest catenary degree among these values is $\textup{c} \left((k+1) \binom{6k+5}{2} + (3k+4) \binom{6k+6}{2} \right) = 4k+5$ and that the catenary degrees of the common values of equations \eqref{6k+4minrelation0}-\eqref{6k+4minrelation3} is smaller than that, which gives
$$ \textup{c}(S) = 4k+5.$$

Lastly, we apply Theorem \ref{theorem_minimalpresentation} and construct a minimal presentation, analogously as before. The cardinality of the minimal presentation is $2k+5$.

\subsection{$n = 6k+1, k>1$}
For the rest of this section, we put $(d_0,d_1,d_2,d_3) = \left(\binom{n+2}{2},\binom{n+1}{2},\binom{n+3}{2},\binom{n+4}{2} \right)$, and we will look for the Ap{\'e}ry set with respect to $\binom{n+2}{2}$ instead.
\subsubsection{Ap{\'e}ry set}
We have the following:
\begin{align}
    (6k+3)\binom{6k+2}{2} &= (3k-1)\binom{6k+3}{2} + (3k+1)\binom{6k+4}{2} + 0\binom{6k+5}{2}, \label{6k+1minrelation0} \\
    (3k+2)\binom{6k+3}{2} &= 0\binom{6k+2}{2} + (3k+1)\binom{6k+4}{2} + 0\binom{6k+5}{2}, \label{6k+1minrelation1} \\
    (3k+1)\binom{6k+4}{2} &= 0\binom{6k+2}{2} + (3k+2)\binom{6k+3}{2} + 0\binom{6k+5}{2}, \label{6k+1minrelation2} \\
    (2k+1)\binom{6k+5}{2} &= (2k+1)\binom{6k+2}{2} + 1\binom{6k+3}{2} + 1\binom{6k+4}{2}, \label{6k+1minrelation3} 
\end{align}
and by Lemmas \ref{1remlemma} and \ref{lemmaminrelation1}, constructing points from the above equations and deleting their associated regions, the Ap{\'e}ry set is contained in the set of labels of the remaining figure.
Furthermore, we have the following relations that satisfy the conditions of Theorem \ref{theoremprocedure}:
\begin{align}
      2 \binom{6k+3}{2} + k \binom{6k+2}{2} &= \binom{6k+4}{2} + k\binom{6k+5}{2}, \label{6k+1eq1}\\
  (6k-2)\binom{6k+3}{2} + 3\binom{6k+4}{2}  &= (6k+2)\binom{6k+2}{2} +  \binom{6k+5}{2} , \label{6k+1eq2}\\
     3\binom{6k+3}{2} + \binom{6k+5}{2} &= \binom{6k+2}{2} + 3 \binom{6k+4}{2}, \label{6k+1eq3} \\
      \binom{6k+3}{2} + (k+1)\binom{6k+5}{2} &= (k+1)\binom{6k+2}{2} + 2 \binom{6k+4}{2}. \label{6k+1eq31}
\end{align}
Adding equation \eqref{6k+1eq3} to \eqref{6k+1eq1}, we obtain additional $k-1$ relations:
\begin{equation}\label{6k+1_k-1equations_1}
\begin{aligned}    
 5 \binom{6k+3}{2} + (k-1) \binom{6k+2}{2} &= 4\binom{6k+4}{2} + (k-1)\binom{6k+5}{2},\\
8 \binom{6k+3}{2} + (k-2) \binom{6k+2}{2} &= 7\binom{6k+4}{2} + (k-2)\binom{6k+5}{2},\\
 & \hspace{5.6pt}  \vdots      \\
(3k-4) \binom{6k+3}{2} + 2 \binom{6k+2}{2} &= (3k-5)\binom{6k+4}{2} + 2\binom{6k+5}{2}, \\
(3k-1) \binom{6k+3}{2} + 1 \binom{6k+2}{2} &= (3k-2)\binom{6k+4}{2} + 1\binom{6k+5}{2}.
\end{aligned}
\end{equation}
Subtracting equation \eqref{6k+1eq3} from \eqref{6k+1eq2}, we obtain further $2k-1$ relations:
\begin{equation}\label{6k+1_2k-1equations_2}
\begin{aligned}    
(6k-5)\binom{6k+3}{2} + 6\binom{6k+4}{2}  &= (6k+1)\binom{6k+2}{2} +  2\binom{6k+5}{2},\\
(6k-8)\binom{6k+3}{2} + 9\binom{6k+4}{2}  &= (6k)\binom{6k+2}{2} +  3\binom{6k+5}{2},\\
 & \hspace{5.6pt}  \vdots      \\
4\binom{6k+3}{2} + (6k-3)\binom{6k+4}{2}  &= (4k+4)\binom{6k+2}{2} +  (2k-1)\binom{6k+5}{2}, \\
1\binom{6k+3}{2} + (6k)\binom{6k+4}{2}  &= (4k+3)\binom{6k+2}{2} +  (2k)\binom{6k+5}{2}.
\end{aligned}
\end{equation}

One can check that the figure $T$, carved out using all the above equations, has the volume $\binom{6k+3}{2}$, which implies that it is an $L$-shape.

\begin{figure}[h]
    \caption{the figure $T$ for $n = 6k+1$ and $k=5$}
    \centering
    \includegraphics[width = 0.6\textwidth]{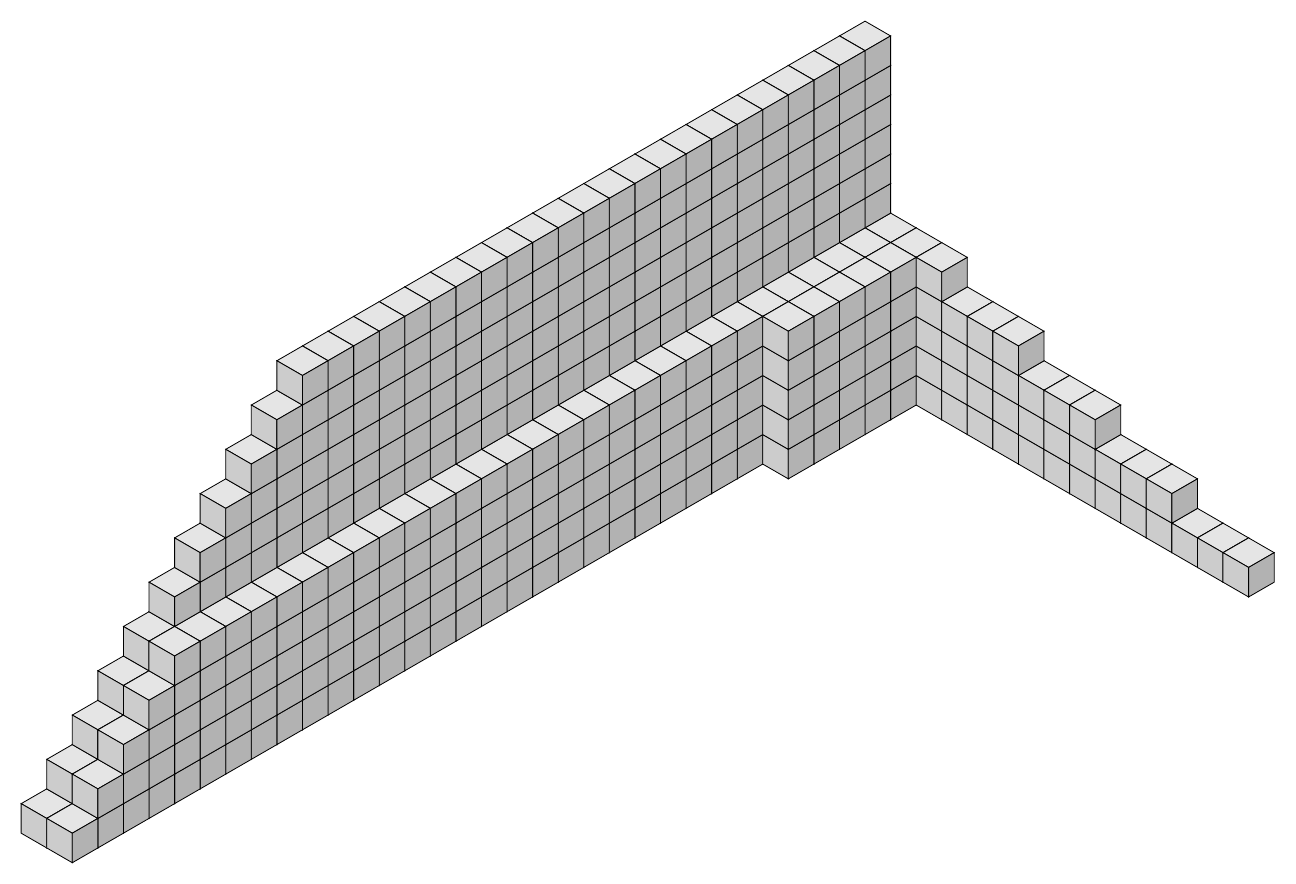}
    \label{fig6k+1}
\end{figure}
\subsubsection{Frobenius number and genus}
The largest label in $T$ is  
$$ (4k+2)\binom{6k+2}{2}  + 2k\binom{6k+5}{2} = 108 k^3 + 126 k^2 + 42 k + 2 ,$$ 
which gives
$$ F(S) = 108 k^3 + 108 k^2 + 27 k - 1.$$
One can also find that the sum of the labels in $T$ is 
$$ 999 k^5 + \frac{4149}{2} k^4 + \frac{3231}{2} k^3 + 570 k^2 + 84 k + 3  ,$$
which gives
$$ G(S)  = \frac{111}{2}k^3 + 60k^2 + \frac{31}{2}k.$$

\subsubsection{Catenary degree}

Using Proposition \ref{propminimalrelations} and Remark \ref{remark_a00_procedure}, we find that \eqref{6k+1minrelation0}-\eqref{6k+1minrelation3} are in fact the minimal relations.

By Lemmas \ref{lemma_catenary2}, \ref{lemma_catenary1}, and \ref{lemma_catenarycase1}, the common values of each of equations \eqref{6k+1minrelation0}-\eqref{6k+1_2k-1equations_2} are Betti elements. 
By Theorem \ref{theorembettifind}, they are all the Betti elements, and there are $3k+5$ of them (as the common values of \eqref{6k+1minrelation1} and \eqref{6k+1minrelation2} are equal). 

From Lemmas \ref{lemma_catenary1} and \ref{lemma_catenarycase1}, we obtain the sets of factorizations of the common values of each of equations \eqref{6k+1eq1}-\eqref{6k+1_2k-1equations_2}.
One can easily check that the largest catenary degree among these values is $6k+3$, attained for \eqref{6k+1minrelation0}, \eqref{6k+1eq2}, and all of \eqref{6k+1_2k-1equations_2}, hence
$$ \textup{c}(S) = 6k+3.$$

Lastly, to obtain a minimal presentation, we apply Theorem \ref{theorem_minimalpresentation}. Similarly as always, we do not have the sets of factorizations of the common values of the minimal relations. However, we can see that $(6k+3)\binom{6k+2}{2} \notin \langle \binom{6k+5}{2} \rangle$ and that 
$ (3k+2)\binom{6k+3}{2} = (3k+1) \binom{6k+4}{2} \notin \langle \binom{6k+2}{2}, \binom{6k+5}{2} \rangle, $
as 
$$(3k+2)\binom{6k+3}{2} = (9k^2 + 15k + 6)\binom{6k+2}{2} - (9k^2 + 3k)\binom{6k+5}{2},$$ 
where $(9k^2 + 15k + 6)\binom{6k+2}{2} \in \textup{Ap}(\langle \binom{6k+2}{2}, \binom{6k+5}{2} \rangle,\binom{6k+5}{2})$ (because $9k^2 + 15k + 6 < \binom{6k+5}{2}$) . This means that we can construct the minimal presentation as always, and it has cardinality $3k+5$.

\subsection{$n = 6k+3, k>1$}
\subsubsection{Ap{\'e}ry set}
We have the following:
\begin{align}
    (6k+5)\binom{6k+4}{2} &= 3k\binom{6k+5}{2} + (3k+2)\binom{6k+6}{2} + 0\binom{6k+7}{2}, \label{6k+3minrelation0} \\
    (3k+3)\binom{6k+5}{2} &= 0\binom{6k+4}{2} + (3k+2)\binom{6k+6}{2} + 0\binom{6k+7}{2}, \label{6k+3minrelation1} \\
    (3k+2)\binom{6k+6}{2} &= 0\binom{6k+4}{2} + (3k+3)\binom{6k+5}{2} + 0\binom{6k+7}{2}, \label{6k+3minrelation2} \\
    (k+1)\binom{6k+7}{2} &= (k+1)\binom{6k+4}{2} + 0\binom{6k+5}{2} + 1\binom{6k+6}{2}, \label{6k+3minrelation3} 
\end{align}
and by Lemmas \ref{1remlemma} and \ref{lemmaminrelation1}, constructing points from the above equations and deleting their associated regions, the Ap{\'e}ry set is contained in the set of labels of the remaining figure.
Furthermore, we have the following relations that satisfy the conditions of Theorem \ref{theoremprocedure}:
\begin{align}
      3 \binom{6k+5}{2} + k \binom{6k+4}{2} &= 2\binom{6k+6}{2} + k\binom{6k+7}{2}, \label{6k+3eq1}\\
  6k\binom{6k+5}{2} + 3\binom{6k+6}{2}  &= (6k+4)\binom{6k+4}{2} +  \binom{6k+7}{2} , \label{6k+3eq2}\\
     3\binom{6k+5}{2} + \binom{6k+7}{2} &= \binom{6k+4}{2} + 3 \binom{6k+6}{2}. \label{6k+3eq3} 
\end{align}
Adding equation \eqref{6k+3eq3} to \eqref{6k+3eq1}, we obtain additional $k-1$ relations:
\begin{equation}\label{6k+3_k-1equations_1}
\begin{aligned}    
6 \binom{6k+5}{2} + (k-1) \binom{6k+4}{2} &= 5\binom{6k+6}{2} + (k-1)\binom{6k+7}{2},\\
9 \binom{6k+5}{2} + (k-2) \binom{6k+4}{2} &= 8\binom{6k+6}{2} + (k-2)\binom{6k+7}{2},\\
 & \hspace{5.6pt}  \vdots      \\
(3k-3) \binom{6k+5}{2} + 2 \binom{6k+4}{2} &= (3k-4)\binom{6k+6}{2} + 2\binom{6k+7}{2}, \\
(3k) \binom{6k+5}{2} + 1 \binom{6k+4}{2} &= (3k-1)\binom{6k+6}{2} + 1\binom{6k+7}{2}.
\end{aligned}
\end{equation}
Subtracting equation \eqref{6k+3eq3} from \eqref{6k+3eq2}, we obtain further $k-1$ relations:
\begin{equation}\label{6k+3_k-1equations_2}
\begin{aligned}    
(6k-3)\binom{6k+5}{2} + 6\binom{6k+6}{2}  &= (6k+3)\binom{6k+4}{2} +  2\binom{6k+7}{2},\\
(6k-6)\binom{6k+5}{2} + 9\binom{6k+6}{2}  &= (6k+2)\binom{6k+4}{2} +  3\binom{6k+7}{2},\\
 & \hspace{5.6pt}  \vdots      \\
(3k+6)\binom{6k+5}{2} + (3k-3)\binom{6k+6}{2}  &= (5k+6)\binom{6k+4}{2} +  (k-1)\binom{6k+7}{2}, \\
(3k+3)\binom{6k+5}{2} + 3k\binom{6k+6}{2}  &= (5k+5)\binom{6k+4}{2} +  k\binom{6k+7}{2}.
\end{aligned}
\end{equation}

One can check that the figure $T$, carved out using all the above equations, has the volume $\binom{6k+5}{2}$, which implies that it is an $L$-shape.

\begin{figure}[h]
    \caption{the figure $T$ for $n = 6k+3$ and $k=5$}
    \centering
     \includegraphics[width = 0.6\textwidth]{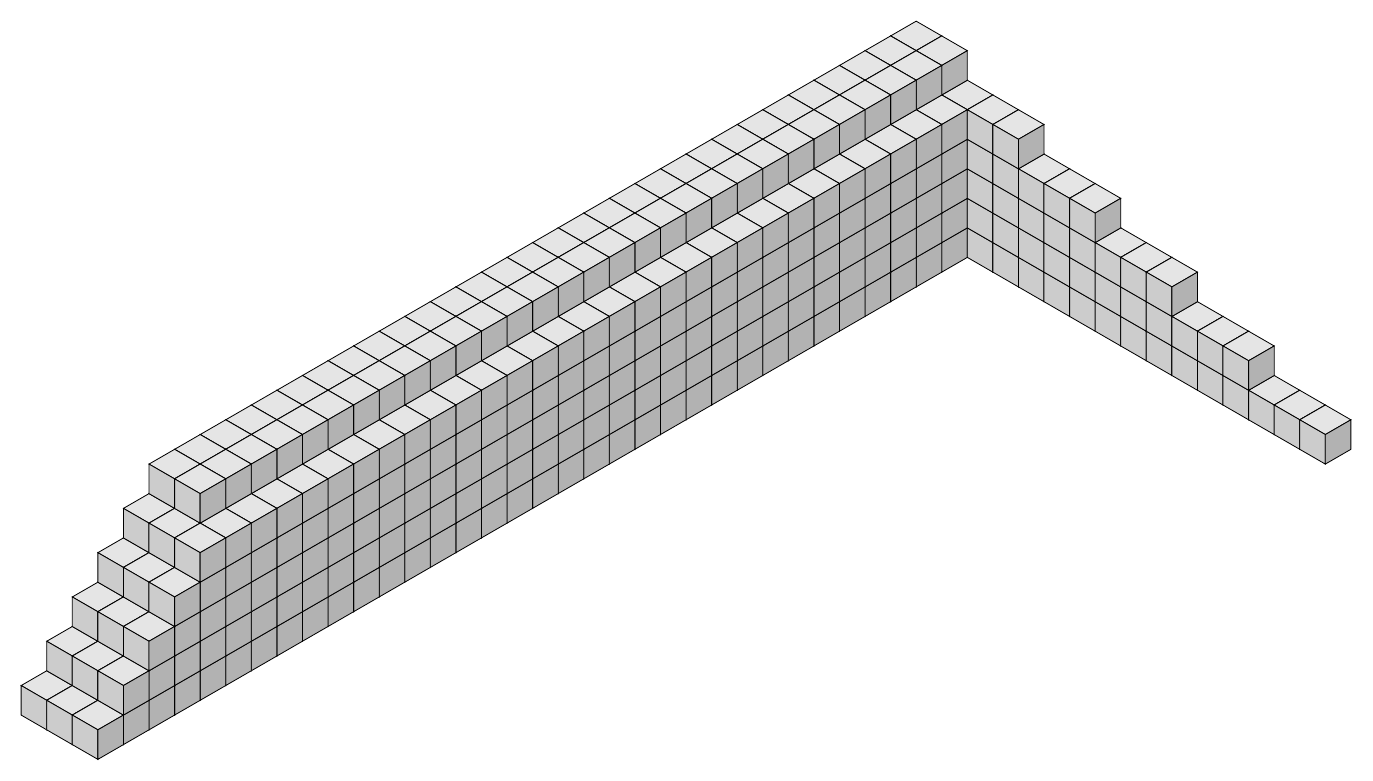}
    \label{fig6k+3}
\end{figure}
\subsubsection{Frobenius number and genus}
The largest label in $T$ is  
$$ (5k+5)\binom{6k+4}{2}  + 2\binom{6k+6}{2} + (k-1)\binom{6k+7}{2} = 108 k^3 + 252 k^2 + 183 k + 39 ,$$ 
which gives
$$ F(S) = 108 k^3 + 234 k^2 + 156 k + 29.$$
One can also find that the sum of the labels in $T$ is 
$$ 999k^5 + \frac{7641}{2}k^4 + 5712k^3 + 4152k^2 + \frac{2913}{2}k + 195 ,$$
which gives
$$ G(S)  = \frac{111}{2}k^3 + 120k^2 + \frac{159}{2}k + 15 .$$

\subsubsection{Catenary degree}

Using Proposition \ref{propminimalrelations} and Remark \ref{remark_a00_procedure}, equations \eqref{6k+3minrelation0}-\eqref{6k+3minrelation3} are in fact the minimal relations.

By Lemmas \ref{lemma_catenary2}, \ref{lemma_catenary1}, and \ref{lemma_catenarycase1}, the common values of each of we find that \eqref{6k+3minrelation0}-\eqref{6k+3_k-1equations_2} are Betti elements. 
By Theorem \ref{theorembettifind}, they are all the Betti elements, and there are $2k+4$ of them (as the common values of \eqref{6k+3minrelation1} and \eqref{6k+3minrelation2} are equal). 

From Lemmas \ref{lemma_catenary1} and \ref{lemma_catenarycase1}, we obtain the sets of factorizations of the common values of each of equations \eqref{6k+1eq1}-\eqref{6k+3_k-1equations_2}.
One can easily check that the largest catenary degree among these values is $6k+5$, attained for \eqref{6k+3minrelation0}, \eqref{6k+3eq2}, and \eqref{6k+3_k-1equations_2}, hence
$$ \textup{c}(S) = 6k+5.$$

Lastly, to obtain a minimal presentation, we apply Theorem \ref{theorem_minimalpresentation}. Similarly as always, we do not have the sets of factorizations of the common values of the minimal relations. However, we can see that $(6k+5)\binom{6k+4}{2} \notin \langle \binom{6k+7}{2} \rangle$, $(k+1)\binom{6k+7}{2} \notin \langle \binom{6k+5}{2} \rangle$, and that 
$ (3k+3)\binom{6k+5}{2} = (3k+2) \binom{6k+6}{2} \notin \langle \binom{6k+4}{2}, \binom{6k+7}{2} \rangle, $
as 
$$(3k+3)\binom{6k+5}{2} = (3k^2 + 8k + 5)\binom{6k+4}{2} - (3k^2+2k)\binom{6k+7}{2},$$ 
where $(3k^2 + 8k + 5)\binom{6k+4}{2} \in \textup{Ap}(\langle \binom{6k+4}{2}, \binom{6k+7}{2} \rangle,\binom{6k+7}{2})$ (because $3k^2 + 8k + 5 < \binom{6k+7}{2}$) . This means that we can construct the minimal presentation as always, and it has cardinality $2k+4$.

\subsection{$n = 6k+5, k>1$}
\subsubsection{Ap{\'e}ry set}
We have the following:
\begin{align}
    (6k+7)\binom{6k+6}{2} &= (3k+1)\binom{6k+7}{2} + (3k+3)\binom{6k+8}{2} + 0\binom{6k+9}{2}, \label{6k+5minrelation0} \\
    (3k+4)\binom{6k+7}{2} &= 0\binom{6k+6}{2} + (3k+3)\binom{6k+8}{2} + 0\binom{6k+9}{2}, \label{6k+5minrelation1} \\
    (3k+3)\binom{6k+8}{2} &= 0\binom{6k+6}{2} + (3k+4)\binom{6k+7}{2} + 0\binom{6k+9}{2}, \label{6k+5minrelation2} \\
    (k+1)\binom{6k+9}{2} &= (k+1)\binom{6k+6}{2} + 1\binom{6k+7}{2} + 0\binom{6k+8}{2}, \label{6k+5minrelation3} 
\end{align}
and by Lemmas \ref{1remlemma} and \ref{lemmaminrelation1}, constructing points from the above equations and deleting their associated regions, the Ap{\'e}ry set is contained in the set of labels of the remaining figure.
Furthermore, we have the following relations that satisfy the conditions of Theorem \ref{theoremprocedure}:
\begin{align}
      4 \binom{6k+7}{2} + k \binom{6k+6}{2} &= 3\binom{6k+8}{2} + k\binom{6k+9}{2}, \label{6k+5eq1}\\
  (6k+2)\binom{6k+7}{2} + 3\binom{6k+8}{2}  &= (6k+6)\binom{6k+6}{2} +  \binom{6k+9}{2}, \label{6k+5eq2}\\
     3\binom{6k+7}{2} + \binom{6k+9}{2} &= \binom{6k+6}{2} + 3 \binom{6k+8}{2}. \label{6k+5eq3} 
\end{align}
Adding equation \eqref{6k+5eq3} to \eqref{6k+5eq1}, we obtain additional $k-1$ relations:
\begin{equation}\label{6k+5_k-1equations_1}
\begin{aligned}    
 7 \binom{6k+7}{2} + (k-1) \binom{6k+6}{2} &= 6\binom{6k+8}{2} + (k-1)\binom{6k+9}{2},\\
10 \binom{6k+7}{2} + (k-2) \binom{6k+6}{2} &= 9\binom{6k+8}{2} + (k-2)\binom{6k+9}{2},\\
 & \hspace{5.6pt}  \vdots      \\
(3k-2) \binom{6k+7}{2} + 2 \binom{6k+6}{2} &= (3k-3)\binom{6k+8}{2} + 2\binom{6k+9}{2}, \\
(3k+1) \binom{6k+7}{2} + 1 \binom{6k+6}{2} &= (3k)\binom{6k+8}{2} + 1\binom{6k+9}{2}.
\end{aligned}
\end{equation}
Subtracting equation \eqref{6k+5eq3} from \eqref{6k+5eq2}, we obtain further $k-1$ relations:
\begin{equation}\label{6k+5_k-1equations_2}
\begin{aligned}    
(6k-1)\binom{6k+7}{2} + 6\binom{6k+8}{2}  &= (6k+5)\binom{6k+6}{2} +  2\binom{6k+9}{2},\\
(6k-4)\binom{6k+7}{2} + 9\binom{6k+8}{2}  &= (6k+4)\binom{6k+6}{2} +  3\binom{6k+9}{2},\\
 & \hspace{5.6pt}  \vdots      \\
(3k+8)\binom{6k+7}{2} + (3k-3)\binom{6k+8}{2}  &= (5k+8)\binom{6k+6}{2} +  (k-1)\binom{6k+9}{2}, \\
(3k+5)\binom{6k+7}{2} + (3k)\binom{6k+8}{2}  &= (5k+7)\binom{6k+6}{2} +  (k)\binom{6k+9}{2}.
\end{aligned}
\end{equation}

One can check that the figure $T$, carved out using all the above equations, has the volume $\binom{6k+7}{2}$, which implies that it is an $L$-shape.

\begin{figure}[h]
    \caption{the figure $T$ for $n = 6k+5$ and $k=5$}
    \centering
   \includegraphics[width = 0.6\textwidth]{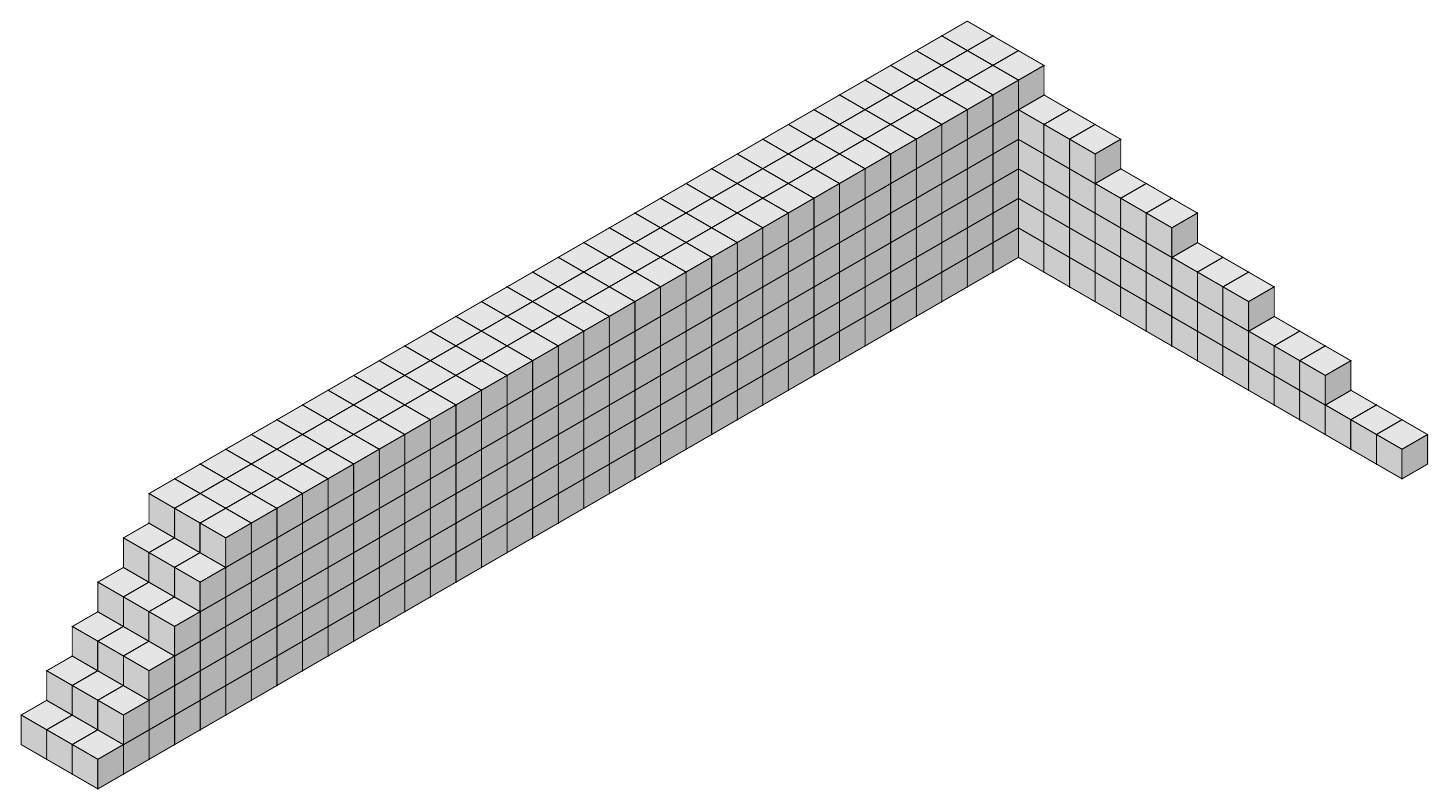}
    \label{fig6k+5}
\end{figure}
\subsubsection{Frobenius number and genus}
One can check that the largest label in $T$ is  
$$ (5k+6)\binom{6k+6}{2}  + 2\binom{6k+8}{2} + k\binom{6k+9}{2} =108 k^3 + 360 k^2 + 399 k + 146  ,$$ 
which gives
$$ F(S) = 108 k^3 + 342 k^2 + 360 k + 125.$$
One can also find that the sum of the labels in $T$ is 
$$ 999 k^5 + \frac{10971}{2}k^4 + 12006k^3 + 13086k^2 + \frac{14199}{2}k + 1533  ,$$
which gives
$$ G(S)  = \frac{111}{2}k^3 + \frac{351}{2}k^2 + 183k + 63.$$

\subsubsection{Catenary degree}

Using Proposition \ref{propminimalrelations} and Remark \ref{remark_a00_procedure}, we find that \eqref{6k+5minrelation0}-\eqref{6k+5minrelation3} are in fact the minimal relations.

By Lemmas \ref{lemma_catenary2}, \ref{lemma_catenary1}, and \ref{lemma_catenarycase1}, the common values of each of equations \eqref{6k+5minrelation0}-\eqref{6k+5_k-1equations_2} are Betti elements. 
By Theorem \ref{theorembettifind}, they are all the Betti elements, and there are $2k+4$ of them (as the common values of \eqref{6k+5minrelation1} and \eqref{6k+5minrelation2} are equal). 

From Lemmas \ref{lemma_catenary1} and \ref{lemma_catenarycase1}, we obtain the sets of factorizations of the common values of each of equations \eqref{6k+5eq1}-\eqref{6k+5_k-1equations_2}.
One can easily check that the largest catenary degree among these values is $6k+7$, attained for \eqref{6k+5minrelation0}, \eqref{6k+5eq2}, and \eqref{6k+5_k-1equations_2}, hence
$$ \textup{c}(S) = 6k+7.$$

Lastly, to obtain a minimal presentation, we apply Theorem \ref{theorem_minimalpresentation}. Similarly as always, we do not have the sets of factorizations of the common values of the minimal relations. However, we can see that $(6k+7)\binom{6k+6}{2} \notin \langle \binom{6k+9}{2} \rangle$, $(k+1)\binom{6k+9}{2} \notin \langle \binom{6k+8}{2} \rangle$, and that 
$ (3k+4)\binom{6k+7}{2} = (3k+3) \binom{6k+8}{2} \notin \langle \binom{6k+6}{2}, \binom{6k+9}{2} \rangle, $
as 
$$\frac{1}{3}(3k+4)\binom{6k+7}{2} = \frac{1}{3}(3k^2 + 10k + 8)\binom{6k+6}{2} - \frac{1}{3}(3k^2 + 4k +1)\binom{6k+9}{2},$$ where $\frac{1}{3}(3k^2 + 10k + 8)\binom{6k+6}{2} \in \textup{Ap}(\langle \frac{1}{3}\binom{6k+6}{2}, \frac{1}{3}\binom{6k+9}{2} \rangle,\frac{1}{3}\binom{6k+7}{2})$ (because $3k^2 + 10k + 8 < \frac{1}{3}\binom{6k+9}{2}$) . This means that we can construct the minimal presentation as always, and it has cardinality $2k+4$.

\subsection{Summary}

\begin{cor}\label{corfrobtriangular}
The Frobenius number of the numerical semigroup generated by the four consecutive triangular numbers $\binom{n+1}{2},\binom{n+2}{2}, \binom{n+3}{2}, \binom{n+4}{2}$ is as follows:
$$
\begin{cases}
\begin{aligned}
&\frac{1}{3} n^3 + \frac{11}{6} n^2 + \frac{5}{2} n - 1
    &&\text{if} \ \ n \equiv 0 \pmod{6},\ n \ge 6,\\
&\frac{1}{3} n^3 + \frac{5}{3} n^2 + \frac{10}{3} n + 1
    &&\text{if} \ \ n \equiv 2 \pmod{6},\ n \ge 2,\\
&\frac{1}{3} n^3 + \frac{3}{2} n^2 + \frac{13}{6} n
    &&\text{if} \ \ n \equiv 4 \pmod{6},\ n \ge 4,\\
&\frac{1}{2} n^3 + \frac{3}{2} n^2 - 3
    &&\text{if} \ \ n \equiv 1 \pmod{6},\ n \ge 1,\\
&\frac{1}{2} n^3 + 2 n^2 + \frac{1}{2} n - 4
    &&\text{if} \ \ n \equiv 3 \pmod{6},\ n \ge 3,\\
&\frac{1}{2} n^3 + 2 n^2 + \frac{5}{2} n
    &&\text{if} \ \ n \equiv 5 \pmod{6},\ n \ge 5.
\end{aligned}
\end{cases}
$$
 
\end{cor}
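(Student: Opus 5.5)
The corollary is a change of variables applied to the six case computations of this section, plus a finite check for small $n$. In each residue class I take the Frobenius number already obtained in terms of $k$ and substitute $k=(n-r)/6$, where $n=6k+r$ with $r\in\{0,1,2,3,4,5\}$. The case $k\le 1$, where the parametrised relations degenerate, is handled separately at the end.

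For the even residues, substitute $k=n/6$, $k=(n-2)/6$, $k=(n-4)/6$ into
\[
72k^3+66k^2+15k-1,\qquad 72k^3+132k^2+84k+17,\qquad 72k^3+198k^2+181k+54 .
\]
Since $72(n/6)^3=\tfrac13 n^3$, the leading coefficient is $\tfrac13$, and expanding gives the three stated cubics. For example, in the case $r=0$:
\[
\tfrac13 n^3+\tfrac{66}{36}n^2+\tfrac{15}{6}n-1=\tfrac13n^3+\tfrac{11}{6}n^2+\tfrac52 n-1 .
\]
For the odd residues I use $k=(n-1)/6$, $k=(n-3)/6$, $k=(n-5)/6$ in
\[
108k^3+108k^2+27k-1,\qquad 108k^3+234k^2+156k+29,\qquad 108k^3+342k^2+360k+125 .
\]
The leading term is $108/216=\tfrac12$, and expanding should give the remaining three polynomials. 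I would cross-check each expansion numerically at two values of $k$ (say $k=2,3$) against the corresponding $k$-polynomial.

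One point needs care in the odd cases. There the Apéry set was computed with respect to $d_0=\binom{n+2}{2}$, not $\binom{n+1}{2}$. This is harmless, because $F(S)=\max \mathrm{Ap}(S,a)-a$ holds for any nonzero $a\in S$, and the section already subtracted $\binom{6k+2}{2}$, $\binom{6k+4}{2}$, $\binom{6k+6}{2}$ respectively. I would verify this, e.g. for $n=6k+1$: $108k^3+126k^2+42k+2-\binom{6k+2}{2}=108k^3+108k^2+27k-1$.

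The main obstacle is the range of validity. The derivations assume $k>1$, i.e. $n\ge 6,8,10,7,9,11$, whereas the statement claims each formula from $n\ge r$ (or $n\ge 6$ when $r=0$). For the finitely many remaining values ($n=1,\dots,5$, together with $n=7,8,9,10,11$ if the $k=1$ instances of the relation families are degenerate) I would compute $F(S)$ directly with \texttt{numericalsgps}, as was done in the summary of the squares section. I would then confirm that each value agrees with the polynomial of its residue class; if one does not, it has to be listed as an exception. For instance, $n=1$ gives $\langle 1,3,6,10\rangle$ with $F=-1$, which indeed matches $\tfrac12+\tfrac32-3=-1$. I would also have to make sure that the semigroup has embedding dimension four in these small cases, or else note that the formula still returns the correct Frobenius number even when it does not.
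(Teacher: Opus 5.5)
Your route is the paper's: the corollary is the six $k$-formulas of this section rewritten via $k=(n-r)/6$, plus a direct computer check of the small $n$ that the case analyses do not cover. Your six expansions are correct. Two bookkeeping points in your plan are wrong, however, and one of them leaves a case unproved.

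First, in the odd cases the modulus is $d_0=\binom{n+2}{2}$. For $n=6k+1,6k+3,6k+5$ this is $\binom{6k+3}{2}$, $\binom{6k+5}{2}$, $\binom{6k+7}{2}$. The numbers $\binom{6k+2}{2},\binom{6k+4}{2},\binom{6k+6}{2}$ you name are $\binom{n+1}{2}=d_1$. So your displayed check is false as written: $\binom{6k+2}{2}=18k^2+9k+1$, which makes the left side $108k^3+108k^2+33k+1$. Subtracting the correct modulus $\binom{6k+3}{2}=18k^2+15k+3$ does give $108k^3+108k^2+27k-1$. Your general remark that $F(S)=\max\mathrm{Ap}(S,a)-a$ for every nonzero $a\in S$ is fine.

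Second, the hypothesis $k>1$ means $k\ge 2$. The case analyses therefore cover only $n\ge 12,14,16,13,15,17$ for $r=0,2,4,1,3,5$, not $n\ge 6,8,10,7,9,11$. At $k=1$ the listed relations do degenerate: for $n=6k$ the coefficient $3k-3$ in \eqref{6keq3} vanishes, and the family \eqref{6k_k-2equations_2} would have $k-2<0$ members. Your first paragraph correctly says $k\le 1$ must be handled separately, but your final list contradicts this. Every $n\in\{1,\dots,11\}$ has to be settled by direct computation, unconditionally. In particular this includes $n=6$, i.e.\ $\langle 21,28,36,45\rangle$, which your list omits. With these corrections your argument is complete in the same sense as the paper's, i.e.\ modulo the \texttt{numericalsgps} verification of the small cases.
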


\begin{cor}\label{corgenustriangular}
The genus of the numerical semigroup generated by the four consecutive triangular numbers $\binom{n+1}{2},\binom{n+2}{2}, \binom{n+3}{2}, \binom{n+4}{2}$ is as follows:
$$
\begin{cases}
\begin{aligned}
&\frac{25}{144} n^3 +  n^2 + \frac{5}{4} n 
    &&\text{if} \ \ n \equiv 0 \pmod{6},\ n \ge 6,\\
&\frac{25}{144} n^3 +  n^2 + \frac{17}{12} n + \frac{7}{9}
    &&\text{if} \ \ n \equiv 2 \pmod{6},\ n \ge 2,\\
&\frac{25}{144} n^3 +  n^2 + \frac{13}{12} n - \frac{4}{9}
    &&\text{if} \ \ n \equiv 4 \pmod{6},\ n \ge 4,\\
&\frac{37}{144} n^3 + \frac{43}{48} n^2 + \frac{1}{48}n - \frac{169}{144}
    &&\text{if} \ \ n \equiv 1 \pmod{6},\ n \ge 1,\\
&\frac{37}{144} n^3 + \frac{49}{48} n^2 + \frac{3}{16}n - \frac{27}{16}
    &&\text{if} \ \ n \equiv 3 \pmod{6},\ n \ge 3,\\
&\frac{37}{144} n^3 + \frac{49}{48} n^2 + \frac{49}{48}n + \frac{37}{144}
    &&\text{if} \ \ n \equiv 5 \pmod{6},\ n \ge 5.
\end{aligned}
\end{cases}
$$

\end{cor}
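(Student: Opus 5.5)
The genus formulas follow from the per-case computations above, together with $G(S) = \frac{1}{a}\sum_{s\in \mathrm{Ap}(S,a)} s - \frac{a-1}{2}$ and the conversion from the parameter $k$ back to $n$. In each residue class the relevant subsection has already found an $L$-shape $T$. For $n \equiv 0,2,4 \pmod 6$ it is an $L$-shape for $\mathrm{Ap}(S,\binom{n+1}{2})$. For $n\equiv 1,3,5 \pmod 6$ it is an $L$-shape for $\mathrm{Ap}(S,\binom{n+2}{2})$. In each case the validity rests on the volume count of $T$ via Lemmas \ref{1remlemma} and \ref{lemmaminrelation1}, which guarantees that the labels of $T$ are exactly the Apéry set. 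So the genus only requires the sum of labels of $T$, divided by the modulus $a$ actually used, minus $\frac{a-1}{2}$.

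\textbf{Step 1: the sum of labels.} I would compute $\sum_{[[x,y,z]]\in T}(xd_1+yd_2+zd_3)$ exactly as in the $n=12k$ squares case. I would split this into the three coordinate sums $\sum x$, $\sum y$, $\sum z$ over $T$. Each sum is evaluated by slicing $T$ into a box part minus the two staircases cut out by the families of relations. These are, for instance, \eqref{6k_k-1equations_1} and \eqref{6k_k-2equations_2} in the case $n=6k$. Both staircases have constant step $3$ in one coordinate and $1$ in another, so every slice sum is a polynomial sum of the form $\sum_{i} (\alpha+\beta i)(\gamma+\delta i)$. These close up with the standard formulas for $\sum i$ and $\sum i^2$. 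This reproduces the displayed totals, e.g. $675k^5+\frac{1845}{2}k^4+297k^3+18k^2-\frac32k$ for $n=6k$.

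\textbf{Step 2: divide and substitute.} I would divide the total by $a$ and subtract $\frac{a-1}{2}$. Here $a=\binom{6k+1}{2}$, $\binom{6k+3}{2}$, $\binom{6k+5}{2}$ in the even cases, and $a=\binom{6k+3}{2}$, $\binom{6k+5}{2}$, $\binom{6k+7}{2}$ in the odd cases, since $d_0=\binom{n+2}{2}$ there. Exactness of the division is a consistency check. I would then substitute $k=(n-r)/6$ for the residue $r\in\{0,\dots,5\}$ and expand. For instance, $\frac{75}{2}k^3+36k^2+\frac{15}{2}k$ with $k=n/6$ gives $\frac{25}{144}n^3+n^2+\frac54 n$, as stated.

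\textbf{Step 3: small cases.} Each subsection assumes $k>1$, but the corollary claims the formulas from $n\ge 1$ upward in the appropriate class. I would handle the finitely many excluded values ($n\le 11$) directly with \texttt{numericalsgps}, as was done for the squares. Alternatively, one could check that the relations and the volume count remain valid for $k=1$ (and $k=0$ where meaningful).

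\textbf{Main obstacle.} I expect the hardest part to be Step 1 in the odd residue classes, where the generators are reordered. There the staircase coming from \eqref{6k+1_2k-1equations_2} (or its analogues) has a $d_0$-term on the same side as the positive coordinates. It therefore carves $T$ in the $x$-direction with steps of $3$ in $x$ and $1$ in $z$, and these interlock with the staircase from \eqref{6k+1_k-1equations_1}. I would first draw the corner list, as in the squares case, to fix the slice boundaries. Only then would I sum, and I would verify the result numerically for a few $k$.
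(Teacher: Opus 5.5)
Your proposal is correct and follows the paper's route. The corollary is the per-case value $G(S)=\frac{1}{d_0}\bigl(\text{sum of labels of }T\bigr)-\frac{d_0-1}{2}$ over the $L$-shapes $T$ (with $d_0=\binom{n+1}{2}$ or $\binom{n+2}{2}$, as you say), rewritten via $k=(n-r)/6$, with the finitely many $n\le 11$ not covered by $k>1$ checked by computer.

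One correction for Step 1: the staircases do not all have $3\times 1$ steps. For $n=6k$, the family \eqref{6k_k-2equations_2} cuts a $3\times 3$ staircase in the $xy$-plane of the bottom layer. For $n=6k+1$, the family \eqref{6k+1_2k-1equations_2} cuts a $1\times 1$ staircase in the $xz$-plane, only for $y\le 1$; it does not interlock with the $3\times 1$ staircase of \eqref{6k+1_k-1equations_1}, which lies in the slab $x=0$. This changes only which polynomial sums appear, not the method.
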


\begin{cor}\label{corcatenarytriangular}
The catenary degree of the numerical semigroup generated by the four consecutive triangular numbers $\binom{n+1}{2},\binom{n+2}{2}, \binom{n+3}{2}, \binom{n+4}{2}$ is as follows:
$$
\begin{cases}
\begin{aligned}
&\frac{2}{3}n + 1 
    &&\text{if} \ \ n \equiv 0 \pmod{6},\ n \ge 6,\\
&\frac{2}{3}n + \frac{5}{3}
    &&\text{if} \ \ n \equiv 2 \pmod{6},\ n \ge 8,\\
&\frac{2}{3}n + \frac{7}{3}
    &&\text{if} \ \ n \equiv 4 \pmod{6},\ n \ge 4,\\
&n+2
    &&\text{if} \ \ n \equiv 1 \pmod{2},\ n \ge 3.\\
\end{aligned}
\end{cases}
$$
Moreover: $\textup{c} \left( \langle \binom{3}{2},\binom{4}{2},\binom{5}{2},\binom{6}{2} \rangle \right) = 10$.
\end{cor}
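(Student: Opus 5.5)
The corollary follows by assembling the catenary degrees computed in the six subsections of this section, re-expressed in terms of $n$; the only extra work is the boundary values of $n$. For $n = 6k$ the subsection on $n=6k$ gives $\textup{c}(S) = 4k+1$. Substituting $k = n/6$ gives $\frac{2}{3}n + 1$.

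For $n = 6k+2$ we have $4k+3$. With $k=(n-2)/6$ this is $\frac{2}{3}n - \frac{4}{3} + 3 = \frac{2}{3}n + \frac{5}{3}$.

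For $n=6k+4$ we have $4k+5 = \frac{2}{3}(n-4)+5 = \frac{2}{3}n + \frac{7}{3}$.

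The three odd residues are handled in the same way. For $n=6k+1$ we have $6k+3 = n+2$; for $n=6k+3$ we have $6k+5 = n+2$; and for $n = 6k+5$ we have $6k+7 = n+2$. This explains the single odd case $n \equiv 1 \pmod 2$. In each case the value was obtained via Theorem \ref{theoremcatenarybetti}: the Betti elements come from Theorem \ref{theorembettifind}, and their factorization sets come from Lemmas \ref{lemma_catenary1} and \ref{lemma_catenarycase1}. I would take those computations as already established in the subsections.

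The remaining step, and the main obstacle, is that every subsection assumed $k>1$, while the corollary claims the formulas from smaller $n$ onward. The excluded values are:
\begin{itemize}
\item $n=6$ ($k=1$);
\item $n=8$ ($k=1$), the case $n=2$ being excluded in the statement;
\item $n=4, 10$ ($k=0,1$);
\item $n = 3, 7, 9, 11$ and $n=5$.
\end{itemize}
For each such $n$ I would do one of two things. The first option is to check that the relations listed in the corresponding subsection still make sense, i.e.\ all coefficients are nonnegative and the families of $k-1$ or $k-2$ extra relations are empty or degenerate in a harmless way. Then the same $L$-shape volume count and Betti analysis go through, and the formula still holds. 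The second option, which I would actually rely on, is a direct computation with \texttt{numericalsgps} in \texttt{GAP}, as was done for the squares in Corollary \ref{corcatenarysquares}. This confirms $\textup{c}(S)$ matches the formula for these finitely many $n$.

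Finally, the same computation handles the genuine exception $n=2$, namely $\langle 3,6,10,15\rangle$. There the generators are not even minimal in the usual pattern, and one checks directly that $\textup{c}=10$. This accounts for the exceptional value listed in the statement.
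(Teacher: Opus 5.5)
Your proposal is correct and follows the paper's route: the corollary restates the six subsection values $4k+1$, $4k+3$, $4k+5$, $6k+3$, $6k+5$, $6k+7$ in terms of $n$, and the finitely many small $n$ not covered by $k>1$, together with the exceptional $n=2$ where the semigroup is really $\langle 3,10\rangle$, are settled by direct computation in \texttt{numericalsgps}. One caveat: for $n=3$ and $n=5$ the embedding dimension drops to three ($21=6+15$, $36=15+21$), so the geometric method does not apply there and your computational option is the one that must be used.
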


\begin{cor}\label{corbettitriangular}
The cardinality of the minimal presentation of the numerical semigroup generated by the four consecutive triangular numbers $\binom{n+1}{2},\binom{n+2}{2}, \binom{n+3}{2}, \binom{n+4}{2}$ is as follows:
$$
\begin{cases}
\begin{aligned}
&\frac{1}{3}n + 3
    &&\text{if} \ \ n \equiv 0 \pmod{6},\ n \ge 12,\\
&\frac{1}{3}n + \frac{10}{3}
    &&\text{if} \ \ n \equiv 2 \pmod{6},\ n \ge 8,\\
&\frac{1}{3}n + \frac{11}{3}
    &&\text{if} \ \ n \equiv 4 \pmod{6},\ n \ge 10,\\
&\frac{1}{2}n + \frac{9}{2}
    &&\text{if} \ \ n \equiv 1 \pmod{6},\ n \ge 7,\\
&\frac{1}{3}n + 3
    &&\text{if} \ \ n \equiv 3 \pmod{6},\ n \ge 9,\\
&\frac{1}{3}n + \frac{7}{3}
    &&\text{if} \ \ n \equiv 5 \pmod{6},\ n \ge 11.
\end{aligned}
\end{cases}
$$
Moreover: $\# \rho \left( \langle \binom{3}{2},\binom{4}{2},\binom{5}{2},\binom{6}{2} \rangle \right) = 1$, $\# \rho \left( \langle \binom{4}{2},\binom{6}{2},\binom{7}{2},\binom{8}{2} \rangle \right) = 1$, $\# \rho \left( \langle \binom{5}{2},\binom{6}{2},\binom{7}{2},\binom{8}{2} \rangle \right)  = 4$, $\#\rho \left( \langle \binom{6}{2},\binom{7}{2},\binom{8}{2},\binom{9}{2} \rangle \right) = 2$, $\# \rho \left( \langle \binom{7}{2},\binom{8}{2},\binom{9}{2},\binom{10}{2} \rangle \right) = 4$.    
 
\end{cor}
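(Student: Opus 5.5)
This corollary collects the cardinalities of minimal presentations found in the six residue classes of $n$ modulo $6$. The plan is to read off each count from the case-by-case analysis above and rewrite it in terms of $n$. The small exceptional values are then handled by direct computation.

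By Theorem \ref{theorem_minimalpresentation}, the cardinality of a minimal presentation is $\sum_{b\in\textup{Betti}(S)}(\ell_b-1)$, where $\ell_b$ is the number of $\mathcal{R}$-classes of $\varphi^{-1}(b)$. The first step is therefore to recall, for each case, the list of Betti elements. These come from Theorem \ref{theorembettifind}, which shows $\textup{Betti}(S)\subseteq U$, together with Lemmas \ref{lemma_catenary2}, \ref{lemma_catenary1} and \ref{lemma_catenarycase1}, which decide which elements of $U$ are Betti.

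The second step is to check that $\ell_b=2$ for every Betti element $b$. For elements arising from identities $\lambda_0d_0+\lambda_id_i=\lambda_jd_j+\lambda_kd_k$, the full factorization sets are given by Lemmas \ref{lemma_catenary1} and \ref{lemma_catenarycase1}. In particular, in the case $a_{00}d_0=a_{11}d_1$, the family $(\lambda_0+\lambda a_{00},\lambda_1-\lambda a_{11},0,0)$ is a single $\mathcal{R}$-class. Adding $(0,0,\lambda_2,\lambda_3)$ therefore gives exactly two classes.

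For the common values of the minimal relations, the non-membership arguments already given in each subsection do the work. Examples are $(3k+1)\binom{6k+1}{2}\notin\langle\binom{6k+3}{2},\binom{6k+4}{2}\rangle$ via the gcd, and the Apéry-set argument in the odd cases. These show that the two displayed factorizations lie in distinct $\mathcal{R}$-classes and that no third class exists. The coincidence of the common values of the two minimal relations, for example (\ref{6kminrelation0}) and (\ref{6kminrelation1}), must be counted once. This yields the counts $2k+3$, $2k+4$, $2k+5$, $3k+5$, $2k+4$, $2k+4$ for $n=6k,6k+2,6k+4,6k+1,6k+3,6k+5$ respectively.

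The third step is substitution. For $n=6k$ we get $2k+3=\frac{n}{3}+3$; for $n=6k+2$ we get $2k+4=\frac{n+10}{3}$; for $n=6k+4$ we get $2k+5=\frac{n+11}{3}$; for $n=6k+1$ we get $3k+5=\frac{n+9}{2}$; for $n=6k+3$ we get $2k+4=\frac{n+9}{3}$; and for $n=6k+5$ we get $2k+4=\frac{n+7}{3}$. These match the displayed formulas.

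The main obstacle is the range of validity. Each subsection assumed $k>1$, so the formulas are proved for $n\ge12,14,16,13,15,17$, while the corollary claims smaller thresholds ($n\ge 8,10,7,9,11$, etc.). I would handle the remaining finitely many $n$, together with the listed exceptions $n=2,3,4,5,6$, by direct computation with the \texttt{numericalsgps} package in \texttt{GAP}, as was done in the squares section.

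One must also check that for $k=1$ the subsections' identities genuinely degenerate rather than silently fail. For instance, the sequences (\ref{6k_k-2equations_2}) have $k-2$ terms, which requires $k\ge2$. Verifying these boundary cases computationally is the cleanest route.
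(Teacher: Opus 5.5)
Your route is the same as the paper's, and your substitutions into $n$ are correct. The corollary is just the case counts $2k+3,\ 2k+4,\ 2k+5,\ 3k+5,\ 2k+4,\ 2k+4$ from Section \ref{triangularnumbers}, rewritten in terms of $n$. Each Betti element there has exactly two $\mathcal{R}$-classes, so $\#\rho$ equals the number of Betti elements. The case $k=1$ and the small exceptions are filled in by \texttt{numericalsgps}. One minor precision concerns the common values of the minimal relations. There it is Lemma \ref{lemma_catenary2} that makes $a_{ii}e_i$ its own $\mathcal{R}$-class; the non-membership facts only rule out a third class.

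The step that cannot succeed is the computational confirmation of the listed exception at $n=3$. Here $S=\langle 6,10,15,21\rangle=\langle 6,10,15\rangle$, because $21=6+15$. So $S$ has embedding dimension three, and no presentation of it can consist of a single pair.

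Concretely, the Betti element $30$ has the factorizations $(5,0,0)$, $(0,3,0)$ and $(0,0,2)$. Their supports are pairwise disjoint, so they form three $\mathcal{R}$-classes. By Theorem \ref{theorem_minimalpresentation}, $\rho_{30}$ therefore already needs two pairs. In fact $\textup{Betti}(S)=\{30\}$ and $\#\rho=2$. The stated value $1$ is the number of Betti elements, which is exactly where your formula $\sum_b(\ell_b-1)$ and a plain Betti count disagree.

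The displayed generators also contain a typo: $\binom{6}{2}$ appears in place of $\binom{5}{2}$. Read literally, the semigroup is $\langle 6,15,21,28\rangle=\langle 6,15,28\rangle$. This again has embedding dimension three, with Betti elements $30$ and $84$, so $\#\rho=2$ under either reading.

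That entry of the statement is false as written and should be $2$. The remaining exceptions $n=2,4,5,6$, with values $1,4,2,4$, do check out.
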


\section{Closing remarks}\label{closingremarks}

In \cite{Moscariello2015}, Moscariello demonstrated that the Frobenius number of numerical semigroups generated by the infinite sequences $n^2,(n+1)^2, (n+2)^2, \dots$ is asymptotically $O(n^2)$. This result was used in \cite{Arias2025} to show that the embedding dimension of such semigroups is asymptotically $O(n)$. In \cite{Lepilov2015}, the Frobenius number of the numerical semigroup generated by $n^2,(n+1)^2,(n+2)^2$ was determined, and it turned out to be given by a finite set of degree-three polynomials in $n$. This matches the results of this paper on four consecutive squares, as in this case we also obtain polynomials of degree three in $n$. Since for infinite sequences of squares the answer is asymptotically $O(n^2)$, the question arises whether there exists a positive integer $k$ such that $F(\langle n^2,(n+1)^2, \dots, (n+k)^2\rangle)$ is found by a finite set of quadratic polynomials in $n$. Using the following lower bound on the Frobenius number, proved in \cite{Killingbergtro2000} (see also \cite[Chapter 3]{RamirezAlfonsin2005}), the answer is negative.

\begin{thm}\label{lowerboundF}
    Let $a_1,\dots,a_m$ be positive integers such that $\gcd(a_1,\dots,a_m) = 1$. Then
    $$ F(\langle a_1, \dots,a_m \rangle) \geq ( (m-1)!a_1a_2 \cdots a_m)^{\frac{1}{m-1}} -\sum_{i=1}^m a_i .$$
\end{thm}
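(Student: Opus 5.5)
The approach is the volume argument underlying the Apéry-set framework of Section \ref{geometricprocedure}, applied to arbitrary generators rather than consecutive squares. Write $S=\langle a_1,\dots,a_m\rangle$ and, after reordering, take $a_1$ as the distinguished element. Put $N=F(S)+a_1$. By the formula $F(S)=\max\{\mathrm{Ap}(S,a_1)\}-a_1$, every element of $\mathrm{Ap}(S,a_1)$ is at most $N$.

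\textbf{Step 1: a covering by cubes.} For each $w\in\mathrm{Ap}(S,a_1)$ choose a representation $w=\sum_{i=2}^m x_ia_i$ with $x_i\in\mathbb{N}$; this is possible because elements of the Apéry set never need $a_1$. Assign to $w$ the unit cube $[x_2,x_2+1]\times\cdots\times[x_m,x_m+1]$ in $\mathbb{R}^{m-1}$. The assigned cubes are pairwise distinct, since distinct Apéry elements have distinct labels. Their union therefore has volume exactly $a_1$, because $\#\mathrm{Ap}(S,a_1)=a_1$.

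\textbf{Step 2: enclosing the cubes in a simplex.} Every point $y$ of the cube assigned to $w$ satisfies
$$\sum_{i\ge2} a_iy_i\le w+\sum_{i\ge2}a_i\le N+\sum_{i\ge2}a_i.$$
So all the cubes lie inside the simplex $\{y\ge0,\ \sum_{i\ge2} a_iy_i\le M\}$ with $M=N+\sum_{i\ge 2}a_i$. This simplex has volume $M^{m-1}/\bigl((m-1)!\,a_2\cdots a_m\bigr)$, which gives
$$a_1\le \frac{M^{m-1}}{(m-1)!\,a_2\cdots a_m}.$$

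\textbf{Step 3: conclusion.} Rearranging the last inequality gives
$$F(S)+a_1+\sum_{i\ge2}a_i\ \ge\ \bigl((m-1)!\,a_1a_2\cdots a_m\bigr)^{1/(m-1)},$$
which is exactly the stated bound.

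The only delicate point, and so the main obstacle, is Step 2. The bound $\sum a_iy_i\le w+\sum a_i$ controls every point of the cube, not merely its corner at $x$; this uses only $y_i\le x_i+1$. Distinctness of the cubes needs no $L$-shape or minimality argument, since injectivity of labels already suffices.
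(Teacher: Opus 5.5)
Your proof is correct and complete, and I don't see a gap. The paper itself does not prove this theorem: it cites it from Killingbergtr{\o} \cite{Killingbergtro2000} (see also \cite{RamirezAlfonsin2005}), so there is no in-paper argument to compare against.

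What you have written is the natural volume argument in the labeled-cube picture of Section~\ref{geometricprocedure}, which the paper adapts from Killingbergtr{\o}'s figure. It is self-contained and uses only two facts recalled in Section~\ref{geometricprocedure}: $\#\mathrm{Ap}(S,a_1)=a_1$, and $F(S)=\max\mathrm{Ap}(S,a_1)-a_1$. It needs no minimality or distinctness of the $a_i$, matching the generality of the statement.

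Two small points are worth making explicit:
\begin{itemize}
\item Taking the $(m-1)$th root in Step 3 needs $M\ge 0$. This holds because $0\in\mathrm{Ap}(S,a_1)$ gives $N\ge 0$.
\item The statement implicitly assumes $m\ge 2$.
\end{itemize}

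As a sanity check, for $m=2$ your inequality is an equality and recovers Sylvester's formula $ab-a-b$.
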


Substituting into Theorem \ref{lowerboundF}, we obtain 
$$ F(\langle n^2,(n+1)^2, \dots, (n+k)^2\rangle) \geq (k! (n(n+1) \cdots (n+k))^2)^{\frac{1}{k}} - \sum_{i = 0}^k (n+i)^2 \geq (k! (n^{k+1})^2)^{\frac{1}{k}} - \sum_{i = 0}^k (n+i)^2. $$
Hence, for $n$ large enough, the right-hand side is $\geq C \cdot (n^2)^{ \frac{k+1}{k}}$ for some positive constant $C$, which shows that the Frobenius number cannot be a quadratic polynomial in $n$. 

In view of the results of Sections \ref{geometricprocedure} and  \ref{bettielements}, we propose the following:

\begin{prob}
   Apply the methods from Sections \ref{geometricprocedure} and \ref{bettielements} to other families of numerical semigroups with embedding dimension four.  
\end{prob}

In Section \ref{bettielements}, we restricted ourselves to work under the hypotheses of Proposition \ref{proptrick}, since the families of numerical semigroups examined in Sections \ref{findingtheFnumber} and \ref{triangularnumbers} satisfy these assumptions. The author did not attempt to extend the procedure developed in Section \ref{bettielements} to all numerical semigroups with embedding dimension four. By Lemma \ref{lemma2cases}, any numerical semigroup with embedding dimension four satisfies the assumptions of Proposition \ref{proptrick} (for some arrangement of its generators) or $a_{ii}d_i = a_{jj}d_j$ and $a_{kk}d_k = a_{ll}d_l$ where $\{i,j,k,l\} = \{0,1,2,3\}$. As a next problem, we propose the following: 

\begin{prob}\label{problemcatenary}
    Formulate and prove a version of Proposition \ref{propminimalrelations} and of the procedure from Section \ref{bettielements} that applies to numerical semigroups with embedding dimension four for which $a_{ii}d_i = a_{jj}d_j$ and $a_{kk}d_k = a_{ll}d_l$ where $\{i,j,k,l\} = \{0,1,2,3\}$. 
\end{prob}

The tame degree is a central invariant in factorization theory (see \cite[Chapter 6]{AssiDAnnaGarciaSanchez2020} and \cite{Geroldinger2025}). Catenary and tame degrees are often considered together; however, the tame degree is harder to obtain. In particular, obtaining it requires the sets $\textup{Minimals}_{\leq} \varphi^{-1}(d_i + S)$. See \cite[Theorem 16]{AssiDAnnaGarciaSanchez2020}. Also, compare \cite[Proposition 70]{AssiDAnnaGarciaSanchez2020} with Remark \ref{remarkcatenary}.

\begin{prob}\label{problemtame}
    Develop a procedure for finding the tame degree of any numerical semigroup with embedding dimension four. 
\end{prob}

\section{Acknowledgments}
At the time of writing this paper, the author was a high school student. The author thanks his tutor, Tomasz Kowalczyk, for suggesting the topic of numerical semigroups. The joint work of the author and Tomasz Kowalczyk is a part of the individual care of highly mathematically talented highschool students held by Jagiellonian University. 

The author would like to also thank Pedro A. Garc{\'i}a{-}S{\'a}nchez and Alfred Geroldinger for their valuable comments and suggestions offered during our correspondence, throughout the development of this paper.

\bibliographystyle{plain}
\bibliography{refs.bib}

\vspace{5pt}

\noindent

\vspace{10pt}

\begin{small}

\noindent
Kazimierz Chomicz

\noindent
Institute of Mathematics

\noindent
Faculty of Mathematics and Computer Science

\noindent
Jagiellonian University

\noindent
ul. Łojasiewicza 6, 30-348 Kraków, Poland

\noindent
e-mail: kazikchomicz@gmail.com

\end{small}

\end{document}